\theoremstyle{plain}
\newtheorem{theorem}{Theorem}[section]
\newtheorem{corollary}[theorem]{Corollary}
\newtheorem{definition}[theorem]{Definition}
\newtheorem{lemma}[theorem]{Lemma}
\newtheorem{proposition}[theorem]{Proposition}
\newtheorem{remark}[theorem]{Remark}
\newtheorem{assumption}[theorem]{Assumption}
\numberwithin{equation}{section}
\newcommand{\re}{{\mathbb R}}    % real numbers
\newcommand{\cn}{{\mathbb C}}    % complex numbers
\newcommand{\supp}{\operatorname{supp}}
\newcommand{\im}{\operatorname{i}}
\newcommand{\tr}{\operatorname{tr}}
\newcommand{\rk}{\operatorname{rank}}
\newcommand{\relint}{\operatorname{relint}}
\newcommand{\Texpl}{\operatorname{T_{expl}}}
\newcommand{\Tjump}{\operatorname{T_{jump}}}
\newcommand{\Omegaexpl}{\operatorname{\Omega_{expl}}}
\newcommand{\aff}{\operatorname{aff}}
\renewcommand{\Im}{\mathrm{Im}}
\renewcommand{\Re}{\mathrm{Re}}
\begin{document}

\title[Path properties and regularity of affine processes]
{Path properties and regularity of affine processes on general state spaces}

\author{Christa Cuchiero and Josef Teichmann}
\address{University of Vienna, Faculty of Mathematics, Nordbergstrasse 15, A-1090 Wien, Austria
\newline ETH Z\"urich, Departement Mathematik, R\"amistrasse 101, 8092 Z\"urich, Switzerland}
\email{christa.cuchiero@univie.ac.at, josef.teichmann@math.ethz.ch}

\begin{abstract}
We provide a new proof for regularity of affine processes on general state spaces by methods from the theory of Markovian semimartingales. On the way to this result we also show that the definition of an affine process, namely as stochastically continuous time-homogeneous Markov process with exponential affine Fourier-Laplace transform, already implies the existence of a c\`adl\`ag version. This was one of the last open issues in the
fundaments of affine processes.
\end{abstract}

\thanks{We thank Georg Grafendorfer and Enno Veerman for discussions and helpful comments. \\Both authors gratefully acknowledge the financial support by the ETH Foundation.}
\keywords{affine processes, path properties, regularity, Markov semimartingales.  \textit{MSC 2000:} Primary: 60J25; Secondary: 60G17}

\maketitle

\section{Introduction}

In the last decades affine processes have been of great interest in mathematical finance to model phenomena like stochastic volatility, stochastic interest rates, heavy tails, credit default, etc.~Pars pro toto we mention here the one-dimensional short-rate model of~\citet{cir}, the stochastic volatility model of~\citet{heston} and the credit risk model of~\citet{lando}. In order to accommodate the more and more complex structures in finance, these simple models have progressively been extended to higher dimensional affine jump diffusions with values in the so-called canonical state space  $\re^m_+ \times \re^{n-m}$, or in the cone of positive semidefinite matrices, see, e.g.,~\citet{daising, dk, dps} for affine models on the canonical state space and~\citet{fonsecaetal1,leippoldtrojani,gourierouxsufana,bur_cie_tro_07} for  multivariate stochastic volatility and interest rate models based on matrix-valued affine processes. 

Axiomatically speaking affine processes are stochastically continuous Markov processes on some state space $ D \subseteq V$, where $V$ is a finite-dimensional Euclidean vector space with scalar product $\langle \cdot, \cdot \rangle$, such that the Fourier-Laplace transform is of exponential affine form in the initial values. More precisely, this means that there exist functions $\Phi$ and $\psi$ such that
\begin{align*}
\mathbb{E}_x\left[e^{\langle u, X_t\rangle} \right]=\Phi(t,u)e^{\langle \psi(t,u),x\rangle},
\end{align*}
for all $(t,x)\in \re_+ \times D$ and $u \in V + \im V$, for which $x \mapsto  e^{\langle u, x\rangle}$ is a bounded function on $D$.
From this definition neither the Feller property, nor the existence of a c\`adl\`ag version, nor differentiability of the Fourier-Laplace transform with respect to time, a concept called \emph{regularity} (see~\citet[Definition 2.5]{dfs}), are immediate. This paper provides a positive answer to the latter two questions, while the Feller property is still an open issue on general state spaces, but can probably be established by building on the results of the present article. 

The reasons for the strong interest in affine processes are twofold: first, affine processes are a rich and flexible class of Markov processes containing L\'evy processes, Ornstein-Uhlenbeck processes, squared Bessel processes and aggregates of them. Second, affine processes are analytically tractable in the sense that the Fourier-Laplace transform, which is a solution of the backward Kolmogorov equation, a PIDE with affine coefficients, can be calculated by solving a system of ODEs for $ \Phi $ and $ \psi $, the so-called generalized Riccati equations. Having the Fourier-Laplace transform at hand then means that real-time-calibration is at reach from a numerical point of view. However in order to show that the functions $\Phi$ and $\psi$ are solution of these generalized Riccati differential equations, one first has to prove regularity, in other words the differentiability of $\Phi$ and $\psi$ with respect to time.

The theory of affine processes has been developed in several steps: in~\citet{kawazu} the full classification on the state space $ \re_+ $ was proved, introducing already the generalized Riccati equations and the related affine technology. A key step in this article is to establish the aforementioned
differentiability of the functions $\Phi$ and $\psi$ with respect to time. After several seminal papers in 
finance the classification of affine processes for the so-called canonical state space $ D=\re_+^m \times \re^{n-m} $ 
was done in~\citet{dfs}, although under the standing assumption of regularity. It remained open whether there are affine processes on the canonical state space which are not regular, or if regularity follows in fact from stochastic continuity and the property that the 
Fourier-Laplace transform is of exponential affine form. Indeed, in~\citet{kst} it is shown that affine processes on the 
canonical state space $ D=\re_+^m \times \re^{n-m} $ are regular, a reasoning motivated by insights from the solution of 
Hilbert's fifth problem, see~\citet{kst} for details. However, this solution depends on the full solution of~\cite{dfs} and 
thus on the particular polyhedral nature of the canonical state space. It remained open if regularity holds on
other ``non-polyhedral'' state spaces, for instance on sets whose boundary is described by
a parabola or on (subsets of) the cone of positive semidefinite $d \times d$ matrices, denoted by $S_d^+$. 

The following example of a possible state space illustrates that affine processes can take values in various types of sets 
and that particular geometric properties of the state space cannot be taken for granted. 
Consider the subsets of the cone of positive semidefinite $d \times d$ matrices of the form
\[
D_k=\{x \in S_d^+ \, | \, \rk(x)\leq k\}, \quad k \in \{1, \ldots, d\}.
\]
In particular, if $k \in \{1, \ldots, d-1\}$, these sets constitute \emph{non-convex} state spaces of affine processes. The non-convexity of $D_k$, $k \neq d$, is easily seen by the following argument: If $D_k$ was convex, it would contain all convex combinations of positive semidefinite matrices of rank smaller than or equal to $k$, thus also matrices of rank strictly greater than $k$, which contradicts the definition of $D_k$. Moreover, if $k \in \{1, \ldots, d-2\}$, the sets $D_k$ are maximal state spaces for affine processes in a sense made clear in the sequel. To illustrate this phenomenon by an example, let $\langle x, y\rangle:=\tr(xy)$ denote the scalar product on $S_d$, the vector space of $d \times d$ symmetric matrices, and let $d >2$ and $k \in \{1, \ldots, d-2\}$. Consider a $k \times d$ matrix of independent Brownian motions $(W_t)_{t \geq 0}$ with initial value $W_0=y \in \mathbb{R}^{k \times d}$ and define the following process
\begin{align}\label{eq:nonconvexexample}
X_t=W_t^{\top}W_t, \quad X_0=x:=y^{\top}y.
\end{align}
Then the distribution of $X_t$ corresponds to the \emph{non-central Wishart distribution} with shape parameter $\frac{k}{2}$, scale parameter $2tI$
and non-centrality parameter $x$ (see, e.g., ~\citet{letacmassam}). Its Fourier-Laplace transform is given by 
\begin{align}\label{eq:FourierLaplaceex}
\mathbb{E}_x\left[e^{\langle u, X_t\rangle}\right]=\det(I-2tu)^{-\frac{k}{2}}e^{\left\langle\frac{(I-2tu)^{-1}u+u(I-2tu)^{-1}}{2},x\right\rangle}, \quad u \in -S_d^{+}+\im S_d, 
\end{align}
and therefore of exponential affine form in all initial values $x$ with $\rk(x) \leq k$. This implies in particular that~\eqref{eq:nonconvexexample} is an affine process with state space $D_k=\{x \in S_d^+ \, | \, \rk(x)\leq k\}$ and functions $\Phi$ and $\psi$ given by
\begin{align*}
\Phi(t,u)&=\det(I-2tu)^{-\frac{k}{2}},\\
\psi(t,u)&=\frac{(I-2tu)^{-1}u+u(I-2tu)^{-1}}{2}.
\end{align*} 
Note here that the set $\mathcal{U}:=\{u \in S_d+ \im S_d\, |\, x \mapsto e^{\langle u, x\rangle } \textrm{ is bounded on } D_k\}$
corresponds to $-S_d^{+}+\im S_d$. By differentiating $\Phi$ and $\psi$ it is easily seen that these functions are solutions of the following system of Riccati ODEs
\begin{align*}
\partial_t \Phi(t,u)&=k\Phi(t,u)\langle I, \psi(t,u)\rangle, &\quad&\Phi(0,u)=1,\\
\partial_t \psi(t,u)&=2\psi(t,u)^2, &\quad& \psi(0,u)=u.
\end{align*}
From the characterization of affine processes on $S_d^+$ via the Riccati equations and the corresponding admissible parameters (see~\citet[Theorem 2.4 and Condition (2.4)]{cfmt}), we know that~\eqref{eq:FourierLaplaceex} is the Fourier-Laplace transform of an affine process with state space $S_d^+$ (meaning in particular that every starting value in $S_d^+$ is possible), if and only if $k \geq d-1$. Hence, for $k \in \{1, \ldots, d-2\}$, the state space $D_k$ cannot be enlarged to its convex hull $S_d^+$ such that the constructed affine process on $D_k$ can also be extended to an affine process on $S_d^+$. Further affine processes with state space $D_k$ can be obtained from squares of Ornstein-Uhlenbeck processes (see~\citet{bru}). 

The aim is thus to find a unified treatment which allows to prove regularity for all possible state spaces without relying on particular properties of them. In~\citet{kst1} this general question has been solved: it is shown that affine processes are regular on general state spaces $ D $, however, under the assumption that the affine process admits a c\`adl\`ag version. The method of proof is probabilistic in the sense that the ``absence of regularity'' leads -- in a probabilistic way -- to a contradiction.

This article now provides a new proof inspired by the theory of Markovian semimartingales as laid down in~\citet{cinlar}. In order to apply these reasonings, we first prove one of the last open issues in the basics of affine processes, namely that stochastic continuity and the affine property are already sufficient for the existence of a version with c\`adl\`ag trajectories, which can then be defined on the canonical probability space of c\`adl\`ag  paths with a filtration satisfying the usual conditions for any initial value. 
Let us remark, that in the existing literature on affine processes, the c\`adl\`ag property -- if addressed -- could directly be deduced from the Feller property, whose proof however strongly depends on the particular choice of the state space. Indeed, the Feller property has been shown -- under the regularity condition -- by~\citet{veerman} for state spaces of the form $\mathcal{X} \times \mathbb{R}^{n-m} $, where $\mathcal{X}\subset \mathbb{R}^m$ is a closed convex set such that the boundary of 
\[
\widetilde{\mathcal{U}}:=\{u \in \mathbb{R}^m \,|\, \sup_{x \in \mathcal{X}}\langle u, x\rangle< \infty\}
\]
is described by the zeros of a real-analytic function. In the proof, the regularity assumption is crucial to achieve this result. Otherwise the state spaces considered so far are of type $K \times \mathbb{R}^{n-m} $, where $K \subset \mathbb{R}^m$ denotes some proper convex cone. In these cases,
the Feller property and also regularity follow from the fact that the function $\psi(t,\cdot)$ maps $ -\mathring{K}^{\ast} \times \im \mathbb{R}^{n-m}$ to itself\footnote{Here, $K^{\ast}$ denotes the dual cone.} for all $t\geq 0$ and that the projection of $\psi(t,\cdot)$ on the components corresponding to the $\mathbb{R}^{n-m}$ part of the state space, denoted by $u \mapsto \Pi_{\mathbb{R}^{n-m}} \psi(t,u)$, is a linear function in $u$.
The first assertion hinges on certain order properties of the function $\Re \, \psi(t,\cdot)$ on $-K^{\ast}$, while the second one builds on the fact that $\Pi_{\mathbb{R}^{n-m}} \psi(t,\cdot)$ maps $\im \mathbb{R}^{n}$ to $\im \mathbb{R}^{n-m}$. Since we lack the mentioned order properties of $\psi$, a similar result to the first one seems hard to establish on general state spaces. 
The second one can be extended to a certain degree by considering particular sequences and projections, as done in Lemma~\ref{lem:psilin} below. In this respect the main difficulty arises from the fact that
we do not have the specific product structure of the state space at hand. For these reasons, we have to take another route, namely martingale regularization for a lot of ``test martingales", to show that affine processes admit a version with c\`adl\`ag trajectories.

Having achieved this, we proceed with the proof of regularity and provide a \emph{full} and \emph{complete} class in the sense of~\citet{cinlar} by using the process' own harmonic analysis. More precisely, we use the fact that, for all $u \in V + \im V$, for which $x \mapsto  e^{\langle u, x\rangle}$ is a bounded function on $D$, the map
\begin{align*}
x \mapsto \int_0^{\eta} \mathbb{E}_x\left[e^{\langle u,X_s\rangle}\right] \, ds, \quad \eta > 0
\end{align*}
always lies in the domain of the extended infinitesimal generator of any time-homogeneous Markov process $X$. The particular form of $\mathbb{E}_x\left[e^{\langle u,X_s\rangle}\right]$ in the case of affine processes then allows to show that the domain of the extended generator actually contains a full and complete class. This in turn implies on the one hand the semimartingale property (up to the lifetime of the affine process) and on the other hand the absolute continuity of the involved characteristics with respect to the Lebesgue measure. The final proof of regularity then builds to a large extent on these results.

The remainder of the article is organized as follows. In Section~\ref{sec:affgen} we define affine processes on general state spaces and derive some fundamental properties of the functions $\Phi$ and $\psi$. Section~\ref{sec:cadlag} and~\ref{sec:filt} are devoted to show the existence of a c\`adl\`ag  version and the right-continuity of the appropriately augmented filtration. The results on the semimartingale nature of affine process are established in Section~\ref{sec:semimart} and are used in Section~\ref{sec:regularity} for the proof of regularity.

\section{Affine Processes on General State Spaces}\label{sec:affgen}

We define affine processes as a particular class of time-homogeneous Markov processes with state space 
$D \subseteq V$, some closed, non-empty subset of an $n$-dimensional real vector space $V$ with scalar product $\langle \cdot, \cdot \rangle$. 
Symmetric matrices and the positive semidefinite matrices on $V$ are denoted by $S(V)$ and $S_+(V)$, respectively.
We write $\re_+$ for $[0,\infty)$, $\re_{++}$ for $(0,\infty)$ and $\mathbb{Q}_+$ for nonnegative rational numbers. 
For the stochastic background and notation we refer to standard text books such as~\citet{jacod} and~\citet{revuzyor}. 

To further clarify notation, we find it useful to recall in this section the basic ingredients of the theory of time-homogeneous Markov processes and the particular conventions being made in this article (compare~\citet[Chapter 1.3]{blumenthal},~\citet[Chapter 1.2]{chung},~\citet[Chapter 4]{ethier},~\citet[Chapter 3, Definition 1.1]{rogers}). 
Throughout, $D$ denotes a closed subset of $V$ and $\mathcal{D}$ its Borel $\sigma$-algebra.
Since we shall not assume the process to be conservative, we adjoin to the state space $D$ a point 
$\Delta \notin D$, called cemetery state, and set $D_{\Delta}=D\cup\{\Delta\}$ as well as 
$\mathcal{D}_{\Delta}=\sigma(\mathcal{D}, \{\Delta\})$. 
We make the convention that $\|\Delta\| :=\infty$, where $\| \cdot\|$ denotes the norm induced by the scalar product 
$\langle \cdot, \cdot \rangle$, and we set $f(\Delta)=0$ for any other function $f$ on $D$.
Moreover, in order to allow for exploding processes we shall also deal with a ``point at infinity'', denoted by $\infty$,
and $D_{\Delta} \cup \{\infty\}$ then corresponds to the one-point compactification of $D_{\Delta}$. If the state space $D$ is compact, we \emph{do not} adjoin $\{\infty\}$, since explosion is anyway not possible.

Consider the following objects on a space $\Omega$:
\begin{enumerate}
\item a stochastic process $X=(X_t)_{t\geq 0}$ taking values in $D_{\Delta}$ such that 
\begin{align}\label{eq:deltaabsorbing}
\textrm{if } X_s(\omega)=\Delta, 
\textrm{ then } X_t(\omega)=\Delta \textrm{ for all } t \geq s \textrm{ and all } \omega  \in \Omega;
\end{align}
\item the filtration generated by $X$, that is, $\mathcal{F}_t^0=\sigma(X_s,\, s\leq t)$, where we set
$\mathcal{F}^0=\bigvee_{t \in \re_+} \mathcal{F}_t^0$;
\item a family of probability measures $(\mathbb{P}_x)_{x\in D_{\Delta}}$ on $(\Omega,\mathcal{F}^0)$.
\end{enumerate}

In the course of the article, we shall show that the ``point at infinity'' $\infty$ can be identified with $\Delta$, 
since it will turn out that property~\eqref{eq:deltaabsorbing} also holds true for $\infty$ in our case.

\begin{definition}[Markov process]
A \emph{time-homogeneous Markov process}
\[
 X=\left(\Omega, (\mathcal{F}_t^0)_{t\geq 0}, (X_t)_{t\geq 0}, (p_t)_{t\geq 0}, (\mathbb{P}_x)_{x\in D_{\Delta}}\right)
\]
with state space $(D, \mathcal{D})$ (augmented by $\Delta$) is a $D_{\Delta}$-valued stochastic process such that, for all $s,t \geq 0$, $x \in D_{\Delta}$ and all bounded $\mathcal{D}_{\Delta}$-measurable functions $f: D_{\Delta} \rightarrow \re$,
\begin{align}\label{eq:Markovprop}
\mathbb{E}_x\left[f(X_{t+s})|\mathcal{F}^0_s\right]=\mathbb{E}_{X_s}\left[f(X_{t})\right]=\int_D f(\xi) p_t(X_s,d\xi), \quad \mathbb{P}_x\textrm{-a.s.}
\end{align}
Here, $\mathbb{E}_x$ denotes the expectation with respect to $\mathbb{P}_x$ and $(p_t)_{t\geq 0}$ is a \emph{transition function} on $(D_{\Delta}, \mathcal{D}_{\Delta})$. A transition function is a family of kernels 
$p_t:D_{\Delta} \times \mathcal{D}_{\Delta} \to [0,1]$ such that
\begin{enumerate}
\item for all $t\geq 0$ and $x\in D_{\Delta}$, $p_t(x, \cdot)$ is a measure on $\mathcal{D}_{\Delta}$ with $p_t(x, D) \leq 1$, 
$p_t(x,\{\Delta\})=1-p_t(x,D)$ and $p_t(\Delta,\{\Delta\})=1$;
\item\label{item:normal} for all $x \in D_{\Delta}$, $p_0(x,\cdot)=\delta_{x}(\cdot)$, where $\delta_{x}(\cdot)$ denotes the Dirac measure at $x$;
\item\label{item:measurable} for all $t\geq 0$ and $\Gamma \in \mathcal{D}_{\Delta}$, $x \mapsto p_t(x, \Gamma)$ is $\mathcal{D}_{\Delta}$-measurable;
\item for all $s,t\geq 0$,  $x\in D_{\Delta}$ and $\Gamma \in \mathcal{D}_{\Delta}$, the Chapman-Kolmogorov equation holds, that is,
\[
 p_{t+s}(x,\Gamma)=\int_{D_{\Delta}} p_s(x,d\xi)p_t(\xi,\Gamma).
\]
\end{enumerate}
If $(\mathcal{F}_t)_{t\geq 0}$ is a filtration with $\mathcal{F}_t^0 \subset \mathcal{F}_t$, $t \geq 0$, then $X$ is a 
\emph{time-homogeneous Markov process relative to $(\mathcal{F}_t)$} if~\eqref{eq:Markovprop} holds with $\mathcal{F}_s^0$ replaced by
$\mathcal{F}_s$.
\end{definition}

We can alternatively think of the transition function as inducing a measurable contraction semigroup  
$(P_t)_{t \geq 0}$ defined by
\[
 P_tf(x):=\mathbb{E}_{x}[f(X_{t})]=\int_{D} f(\xi)p_t(x,d\xi), \quad x \in D_{\Delta},
\]
for all bounded $\mathcal{D}_{\Delta}$-measurable functions $f: D_{\Delta} \rightarrow \re$.

\begin{remark}
\begin{enumerate}
\item Note that, in contrast to~\citet{dfs}, we do not assume $\Omega$ to be the canonical space of all functions $\omega: \re_+ \to D_{\Delta}$,
but work on some general probability space. 
\item Since we have $p_t(x, \Gamma)=\mathbb{P}_x[X_t \in \Gamma]$  for  all $t \geq 0$, $x\in D_{\Delta}$ and 
$\Gamma \in \mathcal{D}_{\Delta}$, property~(ii) 
and~(iii) of the transition function, imply $\mathbb{P}_x[X_0=x]=1$ for all $x \in D_{\Delta}$ and 
measurability of the map $x \mapsto \mathbb{P}_x[X_t \in \Gamma]$ for  all $t \geq 0$  and $\Gamma \in \mathcal{D}_{\Delta}$. 
\end{enumerate}
\end{remark} 

For the following definition of affine processes, let us introduce the set $\mathcal{U}$ defined by
\begin{align}\label{eq:U}
\mathcal{U}=\left\{u \in V +\im V\,\big|\, e^{\langle u,x\rangle} \textrm{ is a bounded function on $D$}\right\}.
\end{align}
Clearly $ \im V \subseteq \mathcal{U}$. Here, the set $\im V$ stands for purely imaginary elements 
and $\langle \cdot, \cdot \rangle$ is the extension of the real scalar product to $V +\im V$, but without complex conjugation.
Moreover, we denote by $p$ the dimension of $\Re \,\mathcal{U}$ and write $\langle \Re\, \mathcal{U} \rangle$ for its (real) linear hull and  
$ {\langle \Re\, \mathcal{U} \rangle}^\perp $ for its orthogonal complement in $V $. 
The set $\im  {\langle \Re\, \mathcal{U} \rangle}^\perp \subset \mathcal{U} $ corresponds to the purely imaginary directions of $ \mathcal{U} $. 
Finally, for some linear subspace $W \subset V $, $\Pi_{W}: V \to V $ denotes the orthogonal projection on $W$, 
which is extended to $ V + \im V $ by linearity, i.e.,~$\Pi_W(v_1 + \im v_2):=\Pi_Wv_1 + \im \Pi_W v_2 $.

Furthermore we need the sets
\[
\mathcal{U}^m=\left\{u \in V + \im V\, |\, \sup_{x \in D}e^{\langle \Re u, x\rangle} \leq m \right\}, \quad m \geq 1,
\]
and note that $\mathcal{U}=\bigcup_{m \geq 1} \mathcal{U}^m$ and $\im V \subseteq \mathcal{U}^m$ for all $m\geq 1$.

\begin{assumption}\label{ass:statespace}
Recall that $\dim V=n$. We suppose that the state space $D$ contains at least 
$n+1$ affinely independent elements $x_1,\ldots,x_{n+1}$, that is, 
the $n$ vectors $(x_1-x_j, \ldots, x_{j-1}-x_j, x_{j+1}-x_j, \ldots, x_{n+1}-x_j)$ 
are linearly independent for every $j \in \{1, \ldots, n+1\}$.
\end{assumption}

We are now prepared to give our main definition:

\begin{definition}[Affine process]\label{def:affineprocess}
A time-homogeneous Markov process $X$ relative to some filtration $(\mathcal{F}_t)$ and with state space $(D,\mathcal{D})$ (augmented by $\Delta$) is called \emph{affine} if
\begin{enumerate}
\item it is stochastically continuous, that is, $\lim_{s\to t}
p_s(x,\cdot)=p_t(x,\cdot)$ weakly on $D$ for every $t \geq 0$ and $x\in D$, and
\item\label{def:affineprocess2} its Fourier-Laplace transform has exponential affine
dependence on the initial state. This means that there exist functions 
$\Phi:\re_+ \times \mathcal{U} \to \cn$ and $\psi:\re_+ \times \mathcal{U} \to V +\im V$ such that, for every $x \in D$ and $m \geq 1$, the map $(t,u) \mapsto \langle \psi(t,u),x\rangle $ is locally continuous on the subset of $ \re_+ \times \mathcal{U}^m $ where $ \Phi $ does not vanish, and 
\begin{align}\label{eq:affineprocess}
\mathbb{E}_x\left[e^{\langle u, X_t\rangle} \right]=P_te^{\langle u, x\rangle}=\int_{D}e^{\langle u, \xi
\rangle}p_t(x,d\xi)=\Phi(t,u)e^{\langle \psi(t,u),x\rangle},
\end{align}
for all $x\in D$ and $(t,u) \in \re_+ \times \mathcal{U}$.
\end{enumerate}
\end{definition}

\begin{remark}\label{rem:definitionaffine}
\begin{enumerate}
\item The above definition differs in four crucial details from the definitions given
in~\citet[Definition 2.1, Definition 12.1]{dfs}.\footnote{In Definition 2.1 affine processes on the canonical state space $D=\re_+^m \times \re^{n-m}$ are considered,
whereas in Definition 12.1 the state space $D$ can be an arbitrary subset of $\re^n$.}
\begin{enumerate}
\item 
First, therein the right hand side of~\eqref{eq:affineprocess} 
is defined in terms of a function $\phi(t,u)$, namely as $e^{\phi(t,u)+\langle \psi(t,u),x\rangle}$, 
such that the function $\Phi(t,u)$ in our definition corresponds to $e^{\phi(t,u)}$.
Our definition is in line with the one given in~\citet{kawazu} and~\citet{kst, kst1} and 
differs from the one in~\citet{dfs}, as we do not require $\Phi(t,u) \neq 0$ a priori.
However, since all affine processes on $D=\re_+^m \times \re^{n-m}$ are infinitely divisible (see~\citet[Theorem 2.15]{dfs}), it turns out 
with hindsight that setting $\Phi(t,u)= e^{\phi(t,u)}$ is actually no restriction.
\item 
Second, we assume that the affine property~\eqref{eq:affineprocess} holds for all $u \in \mathcal{U}$, whereas 
on the canonical state space $D=\re_+^m \times \re^{n-m}$ it is restricted to $\im \re^n$ (see~\citet{dfs}).
This however turns out to imply the affine property~\eqref{eq:affineprocess} also on $\mathcal{U}$.
\item
Third, in contrast to~\citet{dfs}, we take stochastic continuity as part of the
definition of an affine process. We remark that there are simple examples of Markov processes which
satisfy Definition~\ref{def:affineprocess}~(ii),  
but are not stochastically continuous (see~\citet[Remark~2.11]{dfs}). 
\item 
Fourth, due to the general structure of the state space $D$, we decided to assume local continuity of $(t,u) \mapsto \langle \psi(t,u),x \rangle$ on the subset of $ \re_+ \times \mathcal{U}^m $ where $ \Phi $ does not vanish, which we denote by 
$\mathcal{Q}^m=\{(t,u) \in \re_+ \times \mathcal{U}^m \, | \, \Phi(s,u)\neq 0, \textrm{ for all } s \in [0,t]\}$
in the sequel. This condition could be replaced by the following weaker requirement: 
For every $m \geq 1$ and all $(t_0, u_0,x)\in \mathcal{Q}^m \times D$, there exist some neighborhood $U$ such that for all $(t,u) \in U$
\begin{align}\label{eq:psiclose}
| \langle \Im\, \psi(t,u),x \rangle-\langle \Im\, \psi(t_0,u_0),x \rangle| < \pi.
\end{align}
Indeed, in order to conclude the existence of a unique continuous choice for $\Phi$ and $\psi$ on $\mathcal{Q}^m$, this is the only condition needed in the proof of Proposition~\ref{prop:PhiPsiproperties} below. Notice that in many cases the mere existence of $ \Phi $ and $ \psi $ satisfying \eqref{eq:affineprocess} is sufficient for the existence of a continuous selection, e.g., for star shaped spaces $ D $.
\end{enumerate}

\item Let us remark that the assumption of a \emph{closed} state space is no restriction. Indeed, if an affine process is defined 
on some state space $D$, which is only supposed to be an arbitrary Borel subset of $V$ as done in~\citet{kst1}, 
then the affine property~\eqref{eq:affineprocess} extends automatically to $\overline{D}$: 
Let $(x_k)_{k \in \mathbb{N}}$ be a sequence in $D$ converging to some $x \in  \overline{D}$. 
Due to the exponential affine form of the characteristic function, we have for all $t \in \re_+$ and $u \in \im V$
\[
\mathbb{E}_{x_n}\left[e^{\langle u, X_t \rangle}\right]= \Phi(t,u)e^{\langle \psi(t,u), x_n \rangle} \to  \Phi(t,u)e^{\langle \psi(t,u), x \rangle}.
\]
Since the left hand side is continuous in $u$, the same holds true for the right hand side. 
Whence L\'evy's continuity theorem implies that the right hand side is a characteristic function of some substochastic measure 
$p_t(x, \cdot)$ on $\overline{D}$, which is the weak limit of $p_t(x_n, \cdot)$. 
As stochastic continuity and the Chapman-Kolmogorov equations extend to $\overline{D}$, and since weak convergence implies the convergence of the Fourier-Laplace transforms on $\mathcal{U}$, we thus have constructed an affine process with state space $\overline{D}$.

\item Note furthermore that Assumption~\ref{ass:statespace} is no restriction, since we can always 
pass to a lower dimensional ambient vector space if $D$ does not contain $n+1$ affinely independent elements. 
Moreover, note also that we do not exclude compact state spaces. For examples of affine processes on compact state spaces we refer to Remark~\ref{rem:compactstatespace}.

\item We finally remark that in Section~\ref{sec:cadlag} we consider affine processes on the filtered space $(\Omega, \mathcal{F}^0, \mathcal{F}_t^0)$, where $\mathcal{F}^0_t$ denotes the natural filtration and 
$\mathcal{F}^0=\bigvee_{t \in \re_+} \mathcal{F}_t^0$, as introduced above. 
However, we shall progressively enlarge the filtration by augmenting with the respective
null-sets.
\end{enumerate}
\end{remark}

\begin{proposition}\label{prop:PhiPsiproperties}
Let $X$ be an affine process relative to some filtration $(\mathcal{F}_t)$. 
Then we have the following properties:
\begin{enumerate}
\item\label{item:jointcont} If we set $\Phi(0,u)=1$ and  $\psi(0,u)=u$ for all $u \in \mathcal{U}$, then there is a unique choice $\Phi$ and $\psi$ in \eqref{eq:affineprocess} such that $ \Phi $, $\psi $ are jointly continuous on $\mathcal{Q}^m=\{(t,u) \in \re_+ \times \mathcal{U}^m \, | \, \Phi(s,u)\neq 0, \textrm{ for all } s \in [0,t]\}$ for $ m \geq 1 $. 
\item\label{item:rangePsi} $\psi$ maps the set $\mathcal{O}=\{(t,u) \in \re_+ \times \mathcal{U}\, | \, \Phi(t,u)\neq 0\}$ to $\mathcal{U}$. 
\item\label{item:semiflowPhiPsi} The functions $\Phi$ and $\psi$ satisfy the \emph{semiflow} property: Let $u \in \mathcal{U}$ and $t,s \geq 0$. 
Suppose that $\Phi(t+s,u)\neq 0$, then also
$\Phi(t,u) \neq 0$ and $\Phi(s, \psi(t,u)) \neq 0$ and we have
\begin{equation}
\begin{split}\label{eq:flowprop}
\Phi(t+s,u) &= \Phi(t,u)\Phi(s,\psi(t,u)),\\
\psi(t+s,u) &= \psi(s,\psi(t,u)).
\end{split}
\end{equation}
\end{enumerate}
\end{proposition}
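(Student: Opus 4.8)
My plan is to treat the three parts in the order (ii), (i), (iii): the bound in~(ii) is elementary and feeds into~(iii), while the semiflow in~(iii) relies on the continuous and unique selection produced in~(i). The single device underlying all three parts is Assumption~\ref{ass:statespace}: after fixing affinely independent points $x_1,\ldots,x_{n+1}\in D$, the differences $x_1-x_{n+1},\ldots,x_n-x_{n+1}$ form a basis of $V$, so that the bilinear pairing $w\mapsto(\langle w,x_j-x_{n+1}\rangle)_{j=1}^{n}$ is a linear isomorphism $V+\im V\to\cn^{n}$. This is what lets me invert the exponential-affine representation~\eqref{eq:affineprocess} and read off $\psi$ and $\Phi$ from finitely many evaluations $F(t,u,x_j):=\mathbb{E}_{x_j}[e^{\langle u,X_t\rangle}]$.

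For~(ii) I would argue directly: for $(t,u)\in\mathcal{O}$ choose $m$ with $u\in\mathcal{U}^m$. Since $p_t(x,\cdot)$ is a sub-probability measure and $|e^{\langle u,\xi\rangle}|=e^{\langle\Re u,\xi\rangle}\le m$ on $D$, the left-hand side of~\eqref{eq:affineprocess} has modulus at most $m$, so $|\Phi(t,u)|\,e^{\langle\Re\psi(t,u),x\rangle}\le m$ for all $x\in D$; dividing by $|\Phi(t,u)|>0$ shows that $x\mapsto e^{\langle\Re\psi(t,u),x\rangle}$ is bounded, i.e.\ $\psi(t,u)\in\mathcal{U}$.

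The heart of the matter is~(i). First I would check that, for each fixed $x$, the map $(t,u)\mapsto F(t,u,x)$ is jointly continuous on $\re_+\times\mathcal{U}^m$: continuity in $t$ is stochastic continuity tested against the bounded continuous function $e^{\langle u,\cdot\rangle}$, continuity in $u$ is dominated convergence with dominating constant $m$, and joint continuity follows by combining these using local tightness of $\{p_t(x,\cdot)\}$ near a given time. On $\mathcal{Q}^m$ we have $\Phi\neq0$, hence $F(t,u,x)\neq0$ for every $x$, and the quotients $R_j:=F(t,u,x_j)/F(t,u,x_{n+1})=e^{\langle\psi(t,u),x_j-x_{n+1}\rangle}$ are continuous, nonvanishing, and satisfy $R_j(0,u)=e^{\langle u,x_j-x_{n+1}\rangle}$. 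The task is to select a continuous logarithm $g_j(t,u)=\langle\psi(t,u),x_j-x_{n+1}\rangle$: its real part $\log|R_j|$ is continuous automatically, while condition~\eqref{eq:psiclose} keeps the imaginary part from jumping by $2\pi$, so that for each fixed $u$ the determination extends continuously from $t=0$ along the slice $\{t:(t,u)\in\mathcal{Q}^m\}$, which is an interval containing $0$. Recovering $\psi(t,u)\in V+\im V$ from $(g_j)_j$ through the basis and then setting $\Phi(t,u)=F(t,u,x_{n+1})e^{-\langle\psi(t,u),x_{n+1}\rangle}$ produces jointly continuous $\psi,\Phi$. For uniqueness, any two continuous normalized selections $(\Phi_i,\psi_i)$ give $e^{\langle\psi_1-\psi_2,x_j-x_{n+1}\rangle}=1$, so each $\langle\psi_1-\psi_2,x_j-x_{n+1}\rangle$ is a continuous $2\pi\im\mathbb{Z}$-valued function vanishing at $t=0$; by connectedness of the slice it is identically zero, whence $\psi_1=\psi_2$ and then $\Phi_1=\Phi_2$.

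For~(iii), assuming $\Phi(t+s,u)\neq0$, I would condition~\eqref{eq:affineprocess} at time $s$ through the Markov property~\eqref{eq:Markovprop} to obtain, for every $x\in D$,
\begin{align*}
\Phi(t+s,u)e^{\langle\psi(t+s,u),x\rangle}=\mathbb{E}_x\big[e^{\langle u,X_{t+s}\rangle}\big]=\Phi(t,u)\,\mathbb{E}_x\big[e^{\langle\psi(t,u),X_s\rangle}\big],
\end{align*}
the inner conditional expectation $\mathbb{E}_{X_s}[e^{\langle u,X_t\rangle}]=\Phi(t,u)e^{\langle\psi(t,u),X_s\rangle}$ being bounded by $m$ so that all integrals exist. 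As the left-hand side is nonzero for every $x$, this forces $\Phi(t,u)\neq0$, hence $\psi(t,u)\in\mathcal{U}$ by~(ii); the affine property applied with $\psi(t,u)$ in place of $u$ then rewrites the right-hand side as $\Phi(t,u)\Phi(s,\psi(t,u))e^{\langle\psi(s,\psi(t,u)),x\rangle}$ and forces $\Phi(s,\psi(t,u))\neq0$. The very same conditioning, now at an arbitrary intermediate time, shows that $\Phi(t+s,u)\neq0$ propagates to $\Phi(r,u)\neq0$ for all $r\le t+s$ and to $\Phi(\sigma,\psi(t,u))\neq0$ for all $\sigma\le s$, so both $s'\mapsto\psi(t+s',u)$ and $s'\mapsto\psi(s',\psi(t,u))$ are continuous on $[0,s]$ by~(i) and agree at $s'=0$. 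Comparing the two exponential-affine sides on the basis $\{x_j-x_{n+1}\}$ makes their difference a continuous $2\pi\im\mathbb{Z}$-valued function of $s'$ vanishing at $s'=0$, hence identically zero; this gives $\psi(t+s,u)=\psi(s,\psi(t,u))$ and then $\Phi(t+s,u)=\Phi(t,u)\Phi(s,\psi(t,u))$. I expect the branch selection in~(i)—producing one globally continuous vector-valued $\psi$ from the a priori only pointwise data and pinning its branch by the $t=0$ normalization—to be the main obstacle; the remaining eliminations of the $2\pi\im\mathbb{Z}$-ambiguity in~(i) and~(iii) are then routine consequences of the connectedness of the time-slices.
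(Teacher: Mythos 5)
Your parts (ii) and (iii) are correct: (ii) is the paper's own one-line boundedness argument, and your treatment of (iii) --- propagating $\Phi\neq 0$ to all intermediate times via the Markov property and then removing the $2\pi\im\mathbb{Z}$-ambiguity by continuity in $s'$ on the connected interval $[0,s]$ together with agreement at $s'=0$ --- is a legitimate, more explicit rendering of what the paper compresses into ``by uniqueness of $\Phi$ and $\psi$''. The genuine gap is in part (i), which you correctly single out as the crux. Your construction produces a pair that is guaranteed to satisfy \eqref{eq:affineprocess} only at the chosen points $x_1,\dots,x_{n+1}$: writing $\psi^{\mathrm{orig}}$ for the function granted by Definition~\ref{def:affineprocess} and $\psi^{\mathrm{new}}$ for the one you assemble from the lifted logarithms $g_j$ of the quotients $R_j$, all your construction yields is $\langle\psi^{\mathrm{new}}-\psi^{\mathrm{orig}},x_j-x_{n+1}\rangle\in2\pi\im\mathbb{Z}$. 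Since a general $x\in D$ has \emph{real}, not integer, coordinates in the basis $(x_j-x_{n+1})_j$, a nonzero integer discrepancy at a single basis point makes $\Phi^{\mathrm{new}}e^{\langle\psi^{\mathrm{new}},x\rangle}\neq P_te^{\langle u,x\rangle}$ at other points of $D$, so your pair need not be ``a choice in \eqref{eq:affineprocess}'' at all. One must prove the discrepancy vanishes identically, and this identification step is exactly where the continuity hypothesis of the definition enters, \emph{pointwise in} $x$. Your appeal to \eqref{eq:psiclose} cannot do this job for the quotients: \eqref{eq:psiclose} bounds the local oscillation of $\langle\Im\,\psi,x\rangle$ by $\pi$ for each single $x$, hence only by $2\pi$ for the difference $x_j-x_{n+1}$, and a quantity of modulus less than $2\pi$ that is congruent modulo $2\pi$ to something tending to $0$ can still equal a small number plus $\pm2\pi$. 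The threshold $\pi$ is calibrated for single points, which is why the paper lifts the full transforms $P_te^{\langle u,x\rangle}$ for each $x\in D$ separately, divides by the lift at $x=0$, and shows the resulting integer $k(t,u,x)$ is locally constant and hence zero by the normalization at $(t,u)=(0,0)$. (Note also that \eqref{eq:psiclose} is not what makes a continuous lift of $R_j$ exist --- path lifting of a nonvanishing continuous function gives that for free; its actual role is precisely the identification step your argument skips.)

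A second, smaller gap: your lifts $g_j$ are constructed slice-wise (fixed $u$, with $t$ running through the interval $\{t:(t,u)\in\mathcal{Q}^m\}$), but the assertion of (i) is \emph{joint} continuity on $\mathcal{Q}^m$, which does not follow from continuity along slices. One needs the parametrized lifting argument --- nearby parameters give uniformly close paths on compact time intervals, using that $u\mapsto\inf\{s\,:\,\Phi(s,u)=0\}$ is lower semicontinuous --- and this is precisely the content of \citet[Proposition 2.4]{kst1}, which the paper cites rather than reproves. In your write-up this step is asserted (``produces jointly continuous $\psi,\Phi$'') rather than argued.
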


\begin{proof}

Fix $ m \geq 1 $. It follows e.g.~from~\citet[Lemma 23.7]{bau_96} that stochastic continuity of $X$ implies joint continuity of 
$(t,u) \mapsto P_te^{\langle u,x \rangle}$ on $\mathbb{R}_+ \times \mathcal{U}^m$ for all $x \in D$. 
Hence $(t,u)\mapsto \Phi(t,u)e^{\langle \psi(t,u),x\rangle}$ is jointly continuous on $\mathbb{R}_+ \times \mathcal{U}^m$ for every $x \in D$. 
By Assumption~\ref{ass:statespace} on the state space $D$, this in turn
yields a unique continuous choice of the functions $(t,u) \mapsto \Phi(t,u)$ and $(t,u) \mapsto\psi(t,u)$ on 
$\mathcal{Q}^m$. Indeed, by~\citet[Proposition 2.4]{kst1}, we know that for every $x \in D$ there exists a unique continuous 
logarithm $g(x;\cdot,\cdot): \mathcal{Q}^m \to \mathbb{C}$, $(t,u)\mapsto g(x; t,u)$ such that
for all $(t,u,x) \in \mathcal{Q}^m \times D$
\[
e^{g(x;t,u)}=\Phi(t,u)e^{\langle \psi(t,u), x\rangle},
\]
with $g(x;0,0)=0$ holds true. Without loss of generality we suppose $0 \in D$, then it follows that
\[
\Phi(t,u)=e^{g(0;t,u)}=:e^{\phi(t,u)},
\]
is continuous in $(t,u)$. Setting $h(x;t,u):=g(x;t,u)-\phi(t,u)$, with $ \phi(t,u,):=g(0;t,u) $, we have for all $(t,u,x) \in \mathcal{Q}^m \times D$
\begin{align}\label{eq:log}
e^{h(x;t,u)}=e^{\langle \psi(t,u), x\rangle},
\end{align}
whence 
\begin{align}\label{eq:h}
h(x;t,u)=\langle \psi(t,u), x\rangle+ 2\pi \im k(t,u,x), \quad  k(t,u,x)\in \mathbb{Z}.
\end{align}
Moreover, the local continuity assumption on $(t,u)\mapsto \langle \psi(t,u),x \rangle $ implies that for all $(t_0,u_0,x)$ there exists some neighborhood around $(t_0,u_0)$ such that
\[
(t,u) \mapsto \langle \psi(t,u), x\rangle
\]
is continuous.\footnote{Due to relation~\eqref{eq:log} and the continuity of $(t,u)\mapsto h(x;t,u)$, this is also implied by the weaker condition~\eqref{eq:psiclose}.} 
Since $k(t,u,x)\in \mathbb{Z}$, it follows that $k(t,u,x)=k(x)$ on $\mathcal{Q}^m$. 
Setting $t=0$ and $u=0$ in~\eqref{eq:h} thus yields for all $x \in D$
\[
h(x;0,0)-2\pi \im k(x)= -2\pi \im k(x)=0, 
\]
and in particular a unique continuous specification of $(t,u) \mapsto \psi(t,u)$, since 
\[ 
{h(x;t,u)}={\langle \psi(t,u), x\rangle} 
\]
can be uniquely solved for $ \psi $ due to Assumption~\ref{ass:statespace}. The choice of $ \Phi $ and $ \psi $ certainly does not depend on $ m $ but only on the initial conditions for $ t = 0 $.

Concerning~\ref{item:rangePsi}, let $(t,u) \in \mathcal{O}=\{(t,u) \in \re_+ \times \mathcal{U} \, | \, \Phi(t,u)\neq 0\}$. Since
\[
\left|\Phi(t,u)e^{\langle\psi(t,u), x\rangle}\right|=\left|\mathbb{E}_x\left[e^{\langle u, X_t\rangle}\right]\right|\leq \mathbb{E}_x\left[\left|e^{\langle u, X_t\rangle}\right|\right]
\]
is bounded on $D$ and as $\Phi(t,u)\neq 0$, we conclude that $\psi(t,u) \in \mathcal{U}$.

Assumption $\Phi(t+s,u)\neq 0$ in~\ref{item:semiflowPhiPsi} implies
\begin{align}\label{eq:expnon0}
\mathbb{E}_x\left[e^{\langle u, X_{t+s}\rangle}\right] =\Phi(t+s,u)e^{\langle\psi(t+s,u), x\rangle}\neq 0.
\end{align}
By the law of iterated expectations and the Markov property, we thus have
\begin{align}\label{eq:semiflowproof}
\mathbb{E}_x\left[e^{\langle u, X_{t+s}\rangle}\right] =
\mathbb{E}_x\left[\mathbb{E}_x\left[e^{\langle u, X_{t+s}\rangle}\Big | \mathcal{F}_s\right ]\right]
=\mathbb{E}_x\left[\mathbb{E}_{X_s}\left[e^{\langle u, X_{t}\rangle} \right]\right].
\end{align}
If $\Phi(t,u)=0$ or $\Phi(s, \psi(t,u))=0$, then the inner or the outer expectation evaluates to $0$. This implies that
the whole expression is $0$, which contradicts~\eqref{eq:expnon0}. Hence $\Phi(t,u)\neq0$ and 
$\Phi(s, \psi(t,u))\neq 0$ and we can write~\eqref{eq:semiflowproof} as
\[
 \mathbb{E}_x\left[e^{\langle u, X_{t+s}\rangle}\right ]=\mathbb{E}_x\left[\Phi(t,u)e^{\langle\psi(t,u), X_s\rangle}\right ] =
\Phi(t,u)\Phi(s, \psi(t,u))e^{\langle\psi(s,\psi(t,u)), x\rangle}.
\]
Comparing with~\eqref{eq:expnon0} yields the claim by uniqueness of $ \Phi $ and $ \psi $.
\end{proof}

\begin{remark}\label{rem:jointcont}
Henceforth, the symbols $\Phi$ and $\psi$ always correspond to the unique choice as established in 
Proposition~\ref{prop:PhiPsiproperties}.
\end{remark}

\section{Affine processes have a c\`adl\`ag Version}\label{sec:cadlag}

The aim of this section is to show that the definition of an affine process already implies the existence of
a c\`adl\`ag version. This is the core section of this article and of a remarkable subtlety, 
which is maybe less surprising if one considers the generality of the question.
So far we do not know whether general affine processes are Feller processes. If the Feller property held true, this would allow us to conclude the existence of a c\`adl\`ag version. Moreover, we also cannot apply the most general standard criteria for the existence of c\`adl\`ag versions, as for instance described 
in \cite[Theorem I.6.2]{gihsko:83}.

Our approach to the problem is inspired by martingale regularization for a lot of ``test martingales'', from which we want to conclude path properties of the original stochastic process. The main difficulty here is that explosions and/or killing might appear.

Indeed, for every fixed $x \in D$, we first establish that for $\mathbb{P}_x$-almost every $\omega$
\[
t \mapsto M_t^{T,u}(\omega):=\Phi(T-t,u)e^{\langle\psi(T-t,u),X_t(\omega)\rangle}, \quad t \in [0,T],
\]
is the restriction to $ \mathbb{Q}_+ \cap [0,T]$ of a c\`adl\`ag function for almost all $(T,u) \in (0,\infty) \times  \mathcal{U}$, in the sense that
$M_t^{T,u}=0$ if $\Phi(T-t,u)=0$.
This is an
application of Doob's regularity theorem for supermartingales, where we can conclude -- using Fubini's theorem -- that there exists a $\mathbb{P}_x$-null-set outside of which we observe appropriately regular trajectories for almost all $(T,u)$. 

\begin{proposition}\label{prop:Mcadlag}
Let $x \in D$ be fixed and let $X$ be an affine process relative to $(\mathcal{F}_t^0)$. Then 
\[
\lim_{\substack{q \in \mathbb{Q}_+\\ q \downarrow t}} M_q^{T,u}=\lim_{\substack{q \in \mathbb{Q}_+\\ q \downarrow t}}\Phi(T-q,u)e^{\langle\psi(T-q,u),X_q\rangle}, \quad t \in [0,T],
\]
exists $\mathbb{P}_x$-a.s.~for almost all $(T,u) \in (0,\infty) \times \mathcal{U}$ and defines a c\`adl\`ag function in $t$.
\end{proposition}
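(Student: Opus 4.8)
The plan is to recognize each test process $M^{T,u}$ as a uniformly bounded complex martingale, apply Doob's regularization along the rationals for each fixed $(T,u)$, and then upgrade the resulting ``for every $(T,u)$, $\mathbb{P}_x$-a.s.'' statement to ``$\mathbb{P}_x$-a.s., for almost all $(T,u)$'' by a Fubini argument.

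First I would fix $m\ge 1$, $u\in\mathcal{U}^m$ and $T>0$, and identify the process by a Doob-martingale representation. Combining the Markov property~\eqref{eq:Markovprop} with the affine property~\eqref{eq:affineprocess} evaluated at the state $X_t$ (using $\psi(T-t,u)\in\mathcal{U}$ from Proposition~\ref{prop:PhiPsiproperties}\ref{item:rangePsi} where $\Phi\neq0$, and the cemetery convention, under which both sides vanish on $\{X_t=\Delta\}$), I would show that for every $t\in[0,T]$
\[
M_t^{T,u}=\Phi(T-t,u)e^{\langle\psi(T-t,u),X_t\rangle}=\mathbb{E}_{X_t}\!\left[e^{\langle u,X_{T-t}\rangle}\right]=\mathbb{E}_x\!\left[e^{\langle u,X_T\rangle}\,\middle|\,\mathcal{F}_t^0\right]\quad\mathbb{P}_x\text{-a.s.}
\]
This representation is the cleanest way to absorb the vanishing of $\Phi$: where $\Phi(T-t,u)=0$ the conditional expectation is $\mathbb{P}_x$-a.s.\ zero, which is exactly the convention $M_t^{T,u}=0$. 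It also yields a uniform bound that holds \emph{pathwise}, not merely in expectation: since $|\mathbb{E}_{y}[e^{\langle u,X_{T-t}\rangle}]|\le\mathbb{E}_y[e^{\langle\Re u,X_{T-t}\rangle}]\le\sup_{z\in D}e^{\langle\Re u,z\rangle}\le m$ for $y\in D$, and $M_t^{T,u}=0$ on $\{X_t=\Delta\}$, we get $|M_q^{T,u}(\omega)|\le m$ for every $\omega$ and every $q$.

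For each fixed $(T,u)$ the real and imaginary parts of $(M_t^{T,u})_{t\in[0,T]}$ are then bounded real martingales relative to $(\mathcal{F}_t^0)$, so Doob's regularization theorem applies: by the upcrossing inequality --- which holds for an arbitrary filtration, so that the usual conditions are not needed at this stage --- there is a $\mathbb{P}_x$-null set $N_{T,u}$ off which the right and left limits of $q\mapsto M_q^{T,u}$ along $q\in\mathbb{Q}_+\cap[0,T]$ exist at every $t$ and $t\mapsto\lim_{q\downarrow t}M_q^{T,u}$ is c\`adl\`ag. The real work is to make this hold simultaneously for almost all $(T,u)$ outside one null set, which I would achieve by Fubini. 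Introduce the bad set
\[
B=\Big\{(\omega,T,u):\ q\mapsto M_q^{T,u}(\omega)\text{ has an infinite upcrossing number over some rational interval}\Big\}\subseteq\Omega\times(0,\infty)\times\mathcal{U}^m,
\]
characterizing failure of rational-limit regularity through the upcrossing numbers of $\Re M$ and $\Im M$ over intervals with rational endpoints (the pathwise bound above makes a separate boundedness condition unnecessary). For each rational $q$ the map $(\omega,T,u)\mapsto M_q^{T,u}(\omega)$ is jointly measurable, by $\mathcal{F}^0$-measurability of $X_q$ together with the joint continuity of $\Phi,\psi$ on $\mathcal{Q}^m$ from Proposition~\ref{prop:PhiPsiproperties}\ref{item:jointcont} and the vanishing convention off $\mathcal{Q}^m$; since upcrossing numbers are countable suprema over finite rational configurations, $B$ lies in the product $\sigma$-algebra. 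Every $(T,u)$-slice of $B$ is $\mathbb{P}_x$-null, so Tonelli's theorem, applied with $\mathbb{P}_x$ and a fixed $\sigma$-finite reference measure $\lambda$ on $(0,\infty)\times\mathcal{U}^m$ (Lebesgue measure on the affine hull of $\mathcal{U}$), gives $(\mathbb{P}_x\otimes\lambda)(B)=0$; hence there is one $\mathbb{P}_x$-null set $N^m$ off which $\lambda$-almost every $(T,u)$ is good. Taking the union of the $N^m$ over $m\ge1$ and using $\mathcal{U}=\bigcup_m\mathcal{U}^m$ produces a single $\mathbb{P}_x$-null set $N$ outside of which, for almost all $(T,u)$, the limit along rationals exists for every $t\in[0,T]$ and defines a c\`adl\`ag function, as claimed.

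The main obstacle I anticipate is the bookkeeping around the zeros of $\Phi$, i.e.\ the possibility of killing and explosion: one must set up the martingale identity consistently with the convention $M_t^{T,u}=0$, and check that this convention does not spoil the joint measurability of $(\omega,T,u)\mapsto M_q^{T,u}(\omega)$ needed for Tonelli. The representation $M_t^{T,u}=\mathbb{E}_x[e^{\langle u,X_T\rangle}\mid\mathcal{F}_t^0]$ is precisely the device that renders both points routine.
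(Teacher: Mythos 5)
Your proposal is correct and follows essentially the same route as the paper's proof: identify $M^{T,u}$ as the bounded martingale $\mathbb{E}_x\left[e^{\langle u,X_T\rangle}\,\middle|\,\mathcal{F}_t^0\right]$ via the Markov and affine properties, apply Doob's regularization theorem along the rationals for each fixed $(T,u)$, and then exchange the quantifiers by Fubini's theorem applied to the jointly measurable set of bad triples $(\omega,T,u)$. Your extra bookkeeping --- the upcrossing-number description of the bad set, the decomposition $\mathcal{U}=\bigcup_m \mathcal{U}^m$, and the explicit treatment of the zeros of $\Phi$ through the conditional-expectation representation --- merely spells out details that the paper's proof (an adaptation of Protter's Theorem I.4.30) leaves implicit.
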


\begin{proof}
In order to prove this result, we adapt parts of the proof of~\citet[Theorem I.4.30]{protter} to our setting.
Due to the law of iterated expectations
\[
M_t^{T,u}=\Phi(T-t,u)e^{\langle\psi(T-t,u),X_t\rangle}=\mathbb{E}_x\left[e^{\langle u, X_T\rangle} \big | \mathcal{F}^0_t\right], \quad t \in [0,T],
\]
is a (complex-valued) $(\mathcal{F}_t^0, \mathbb{P}_x)$-martingale for every $u \in \mathcal{U}$ and every $T >0$. 
From Doob's regularity theorem (see, e.g.,~\citet[Theorem II.65.1]{rogers}) it then follows that, for any fixed $(T,u)$,
the function $t \mapsto M_t^{T,u}(\omega)$, with $t \in \mathbb{Q}_+ \cap [0,T]$, is the restriction to $ \mathbb{Q}_+ \cap [0,T]$ 
of a c\`adl\`ag function for $\mathbb{P}_x$-almost every $\omega$.
Define now the set
\begin{multline}\label{eq:Gamma}
 \Gamma=\{(\omega, T,u) \in \Omega \times (0,\infty) \times \mathcal{U}\, | \, t \mapsto M_t^{T,u}(\omega),\, t\in \mathbb{Q}_+ \cap [0,T],\\
\textrm{ is not the restriction of a  c\`adl\`ag function}\}.
\end{multline}
Then $\Gamma$ is a $\mathcal{F}^0 \otimes \mathcal{B}((0,\infty) \times \mathcal{U})$-measurable set. Due to the above argument
concerning regular versions of (super-)martingales, $\int_{\Omega} 1_{\Gamma}(\omega,T,u) \mathbb{P}_x(d\omega)=0$ for any 
$(T,u) \in (0,\infty) \times \mathcal{U}$. By Fubini's theorem, we therefore have
\[
 \int_{\Omega}\int_{(0,\infty) \times \mathcal{U}}1_{\Gamma}(\omega,T,u)d\lambda\,\mathbb{P}_x(d\omega)= 
\int_{(0,\infty) \times \mathcal{U}}\int_{\Omega}1_{\Gamma}(\omega,T,u)\mathbb{P}_x(d\omega)\,d\lambda=0,
\]
where $\lambda$ denotes the Lebesgue measure. 
Hence, for $\mathbb{P}_x$-almost every $\omega$, $t \mapsto M_t^{T,u}(\omega)$, the map $t\in \mathbb{Q}_+ \cap [0,T]$, is the restriction of a c\`adl\`ag function for $\lambda$-almost all $(T,u) \in (0,\infty) \times \mathcal{U}$, which proves the result.
\end{proof}

Having established path regularity of the martingales $M^{T,u}$, we want to deduce the same for the affine process $X$. This is the purpose of the subsequent lemmas and propositions, 
for which we need to introduce the following sets: 
\begin{align}
\widetilde{\Omega} &\textrm{ is the projection of } \{\Omega \times (0,\infty) \times \im V\} \setminus \Gamma  \textrm{ onto } \Omega,\label{eq:omegatilde}\\
\mathcal{T} &\textrm{ is the projection of } \{\Omega \times (0,\infty) \times \im V\} \setminus \Gamma  \textrm{ onto } (0, \infty),\label{eq:mathcalT}\\
\mathcal{V} &\textrm{ is the projection of } \{\Omega \times (0,\infty) \times \mathcal{U}\} \setminus \Gamma  \textrm{ onto } \mathcal{U},\label{eq:mathcalV}\\
\mathcal{V}^m &\textrm{ is the projection of } \{\Omega \times (0,\infty) \times \mathcal{U}^m\} \setminus \Gamma  \textrm{ onto } \mathcal{U}^m,\label{eq:mathcalVm}
\end{align}
where $\Gamma$ is given in~\eqref{eq:Gamma}. Denoting by $\mathcal{F}^x$ the completion of $\mathcal{F}^0$ with respect to 
$\mathbb{P}_x$, let us remark that the measurable projection theorem implies that $\widetilde{\Omega} \in \mathcal{F}^x$ and by the above proposition we have $\mathbb{P}_x[\widetilde{\Omega}]=1$. 

The following lemma is needed to prove Proposition~\ref{prop:Xcadlag} below which is essential for establishing the existence of 
a c\`adl\`ag version of $X$. 

\begin{lemma} \label{lem:psilin} 
 
Let $\psi$ be given by~\eqref{eq:affineprocess} and assume that there exists some $D$-valued sequence $(x_k)_{k \in \mathbb{N}}$ such that 
\begin{align}\label{eq:realcomp}
\lim_{k \to \infty} \Pi_{\langle \Re \, \mathcal{U} \rangle}x_k=:\lim_{k \to \infty} y_k
\end{align}
exists finitely valued and 
\begin{align}\label{eq:imcomp}
\limsup_{k \to \infty} \|\Pi_{  {\langle \Re\, \mathcal{U} \rangle}^\perp} x_k\| =\infty.
\end{align}
\begin{enumerate}
\item
Then we can choose a subsequence of $ (x_k) $ denoted again by $ (x_k) $: 
along this sequence there exist a finite number of mutually 
orthogonal directions $ g_i \in {\langle \Re\, \mathcal{U} \rangle}^\perp $ 
of length $1$ such that
\[
x_k - \sum_i \langle x_k, g_i \rangle g_i 
\]
converges as $ k \to \infty $ and $ \langle x_k, g_i \rangle $ diverges as $ k \to \infty $, where the rates of divergence are non-increasing in $i$ in the sense that 
\[
\limsup_{k \to \infty} \frac{\langle x_k, g_{i+1} \rangle}{\langle x_k, g_i \rangle} < \infty.
\] 
\item Moreover, let $T >0$ be fixed and let $r > 0$ such that $\Phi(t,u) \neq 0$ for all $(t,u) \in [0,T]\times B_r$, where $B_r$ denotes the ball with radius $r$ in $\im V$, i.e.
\[
B_r=\{u \in \im V\, |\, \|u\| < r\}.
\] 
Then, there exist continuous functions $R: [0,T] \to \re_{++}$ and $\lambda_i: [0,T] \to V$ such that
\[
\langle \psi(t,u) , g_i \rangle =\langle \lambda_i(t),u\rangle
\] 
for all $u \in B_{R(t)}$.
\end{enumerate}
\end{lemma}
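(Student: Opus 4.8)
The plan is to treat the two parts separately: the first by elementary compactness, the second by converting the divergence of $\langle x_k, g_i\rangle$ into rigidity of the characteristic functions $\mathbb{E}_{x_k}[e^{\langle u, X_t\rangle}]$.

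For part (i) I would argue by a greedy ``peel off the dominant direction'' recursion inside the finite-dimensional space $W:=\langle\Re\,\mathcal{U}\rangle^\perp$. Writing $z_k:=\Pi_W x_k$, hypothesis~\eqref{eq:imcomp} lets me pass to a subsequence with $\norm{z_k}\to\infty$; since the unit sphere of $W$ is compact I extract a further subsequence so that $z_k/\norm{z_k}\to g_1$, a unit vector of $W$, whence $\langle x_k, g_1\rangle=\langle z_k, g_1\rangle\sim\norm{z_k}$ diverges. I then replace $z_k$ by its component $z_k-\langle z_k, g_1\rangle g_1$ orthogonal to $g_1$ and repeat: if this residual stays bounded the recursion stops, otherwise it produces a new unit direction $g_2\perp g_1$, and so on. Each step removes one dimension of $W$, so after at most $\dim W$ steps the residual $x_k-\sum_i\langle x_k, g_i\rangle g_i$ has bounded $W$-projection and, together with the convergence~\eqref{eq:realcomp} of $\Pi_{\langle\Re\,\mathcal{U}\rangle}x_k$, converges along a last subsequence. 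The rate bound $\limsup_k\langle x_k, g_{i+1}\rangle/\langle x_k, g_i\rangle<\infty$ is automatic, since at each step the new residual has norm no larger than the previous one while $\langle x_k, g_i\rangle$ is asymptotic to the norm of the $i$-th residual.

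For part (ii) fix $t\in[0,T]$. The decisive preliminary observation is that $\langle\Re\,\psi(t,u), g_i\rangle=0$ for every $u\in B_r$: indeed $(t,u)\in\mathcal{O}$ forces $\psi(t,u)\in\mathcal{U}$ by Proposition~\ref{prop:PhiPsiproperties}~\ref{item:rangePsi}, so $\Re\,\psi(t,u)\in\langle\Re\,\mathcal{U}\rangle$, which is orthogonal to $g_i\in W$. Hence $\langle\psi(t,u), g_i\rangle=\im\,\langle\Im\,\psi(t,u), g_i\rangle$ is purely imaginary, and the claim reduces to showing that $v\mapsto h_i(v):=\langle\Im\,\psi(t,\im v), g_i\rangle$ is linear on a ball. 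Decomposing $x_k=\sum_l\langle x_k, g_l\rangle g_l+s_k$ with $s_k$ convergent, the affine property gives, for $u=\im v\in B_r$,
\[
\mathbb{E}_{x_k}\!\left[e^{\langle u, X_t\rangle}\right]=\Phi(t,u)\,e^{\langle\psi(t,u), s_k\rangle}\,e^{\im\sum_l\langle x_k, g_l\rangle h_l(v)}.
\]
Using $\langle\Re\,\psi(t,u), g_l\rangle=0$ and convergence of $s_k$, the modulus of the right-hand side converges to a continuous limit $\rho(v)$ with $\rho(0)>0$, so the normalized laws $p_t(x_k,\cdot)/p_t(x_k,D)$ have symmetrizations whose characteristic functions converge to the continuous function $\rho^2/\rho(0)^2$; by L\'evy's theorem they are tight, hence the family itself is tight up to translations. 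Choosing centering vectors $c_k$, passing to a weakly convergent subsequence, and taking continuous branches of the phase near $v=0$, I obtain convergence of $\sum_l\langle x_k, g_l\rangle h_l(v)-\langle c_k, v\rangle$.

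It remains to isolate each $h_i$, and this is the main obstacle, caused by directions $g_l$ diverging at comparable rates. I would resolve it by induction on $i$, exploiting that by construction $g_i$ is the limiting direction of the $i$-th residual $x_k^{(i)}:=x_k-\sum_{l<i}\langle x_k, g_l\rangle g_l$. Assuming $h_1,\dots,h_{i-1}$ already shown to be linear, their contributions are linear in $v$ and may be absorbed into the centering term; dividing the remaining relation by $\norm{x_k^{(i)}}$ and using that $x_k^{(i)}/\norm{x_k^{(i)}}\to g_i$ with $g_i\perp g_l$ for $l>i$ yields $\langle x_k, g_l\rangle/\norm{x_k^{(i)}}\to\delta_{il}$ for $l\ge i$, so every term except the $i$-th drops out and $h_i(v)=\langle\lambda_i(t), v\rangle$ for a limit vector $\lambda_i(t)\in V$, giving $\langle\psi(t,\im v), g_i\rangle=\langle\lambda_i(t),\im v\rangle$. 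Finally, continuity of $\lambda_i$ in $t$ follows from the joint continuity of $\psi$ on $\mathcal{Q}^m$ established in Proposition~\ref{prop:PhiPsiproperties}~\ref{item:jointcont}, and a continuous positive radius $R(t)\le r$ is selected by compactness of $[0,T]$ together with the fact that the branch and the above identity persist on a neighborhood of $v=0$ depending continuously on $t$.
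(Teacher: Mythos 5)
Your part~(i) is essentially the paper's own argument: the paper also defines the directions greedily, $g_r=\lim_k\bigl(x_k-\sum_{i<r}\langle x_k,g_i\rangle g_i\bigr)/\|x_k-\sum_{i<r}\langle x_k,g_i\rangle g_i\|$ along nested subsequences, with the rate ordering and termination coming out exactly as in your peeling recursion.

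Your part~(ii) is correct in substance but takes a genuinely different route. The paper adapts Lemma~3.1 of \citet{kst}: it normalizes the characteristic function to a positive definite function $\Theta(u,t,x)$ with $\Theta(0,t,x)=1$, invokes the inequality $|\Theta(u+v,t,x)-\Theta(u,t,x)\Theta(v,t,x)|\le 1$ (the $3\times 3$ determinant trick), and derives a contradiction between this bound and the diverging phase $\langle \beta_1-\beta_2,\Pi_{{\langle \Re\,\mathcal{U}\rangle}^\perp}x_k\rangle$ along the fastest direction $g_1$; this yields the Cauchy relation $\Im\,\langle \psi(t,u+v)-\psi(t,u)-\psi(t,v),g_1\rangle=0$ on a ball $B_{R(t)}$, hence linearity by continuity, and then an induction over $i$. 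You instead symmetrize the normalized laws $p_t(x_k,\cdot)/p_t(x_k,D)$, use convergence of the moduli (resting on the same key observation as the paper, namely $\Re\,\psi(t,u)\perp g_i$ via Proposition~\ref{prop:PhiPsiproperties}~\ref{item:rangePsi}) together with the local L\'evy continuity theorem to get tightness up to centering, pass to a weak subsequential limit, and deduce convergence of the distinguished logarithms, i.e.\ of $B_k(v):=\sum_l\langle x_k,g_l\rangle h_l(v)-\langle c_k,v\rangle$ with $h_l(v)=\langle \Im\,\psi(t,\im v),g_l\rangle$. Your scaling induction, dividing by $\|x_k^{(i)}\|$ and using $\langle x_k,g_l\rangle/\|x_k^{(i)}\|\to\delta_{il}$ for $l\ge i$, then exhibits each $h_i$ as a pointwise limit of linear functionals, hence linear. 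This replaces the paper's hands-on positive-definiteness estimate by soft probabilistic compactness, and it exploits part~(i) differently: the paper uses the rate ordering to prevent phase cancellation, you use it only through the limits $\langle x_k,g_l\rangle/\|x_k^{(i)}\|\to\delta_{il}$.

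The one step you leave unjustified is the last sentence: that the neighborhood of $v=0$ on which the branch and the identity persist ``depends continuously on $t$'' is an assertion, not an argument, and it is not cosmetic -- continuity of $\lambda_i$ itself needs a locally uniform lower bound on the radius of linearity (if that radius could shrink to $0$ along $t_n\to t$, there would be no common $v\neq 0$ against which to compare $\lambda_i(t_n)$ and $\lambda_i(t)$). Fortunately your own modulus computation closes this gap: along the chosen subsequence the limit characteristic function $\hat\nu_t$ of the centered laws satisfies $|\hat\nu_t(v)|=\lim_k|\hat\mu_k(v)|=|\Phi(t,\im v)|e^{\langle \Re\,\psi(t,\im v),y\rangle}\big/\bigl(\Phi(t,0)e^{\langle \psi(t,0),y\rangle}\bigr)$, which is strictly positive for \emph{all} $v\in B_r$ because $\Phi(t,\cdot)\neq 0$ on $B_r$ by hypothesis. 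Hence the distinguished-logarithm convergence, and with it the convergence of $B_k$ and the resulting linear identity, holds on every compact subset of $B_r$, so you may simply take $R(t)\equiv r$ (even stronger than the paper's $t$-dependent radius); continuity of $\lambda_i$ then follows from the joint continuity of $\psi$ exactly as you indicate. Make this explicit and your proof is complete.
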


\begin{proof}
Concerning the first assertion, we define -- by choosing appropriate subsequences, still denoted  by $(x_k)$ -- 
the directions of divergence in $\langle \Re \, \mathcal{U} \rangle ^{\perp}$ inductively by
\begin{align}\label{eq:gr}
g_r=\lim_{k \to \infty} \frac{x_k-\sum_{i=1}^{r-1} \langle x_k,g_i\rangle g_i}{\|x_k-\sum_{i=1}^{r-1}\langle x_k,g_i\rangle g_i\|}
\end{align}
as long as $\limsup_{k \to \infty}\|x_k-\sum_{i=1}^{r-1} \langle x_k,g_i\rangle g_i\| = \infty$. Notice that we can
choose the directions $ g_i $ mutually orthogonal and the rates of divergence $ \langle g_i , x_k \rangle $ non-increasing
in $ i $.

For the second part of the statement, we adapt the proof of~\citet[Lemma 3.1]{kst} to our situation, 
using in particular the existence of a sequence in $D$ with the properties~\eqref{eq:realcomp} and~\eqref{eq:imcomp}.
As characteristic function, the map
$ \im V \ni u \mapsto \mathbb{E}_x[e^{\langle u, X_{t} \rangle}]$ is positive definite for any $x \in D$ and 
$t \geq 0$. Define now for every 
$u \in  B_r$, $x \in D$ and $t \in [0,T]$ the function
\begin{align}\label{eq:Thetachar}
\Theta(u,t,x)=\frac{\mathbb{E}_x\left[e^{\langle u, X_{t} \rangle}\right]}{\Phi(t,0)e^{\langle  \Pi_{\langle \Re \, \mathcal{U}\rangle}\psi(t,0), \Pi_{\langle \Re \, \mathcal{U}\rangle}x \rangle}}=\frac{\Phi(t,u)e^{\langle \psi(t,u), x \rangle}}{\Phi(t,0)e^{\langle  \Pi_{\langle \Re \, \mathcal{U}\rangle}\psi(t,0), \Pi_{\langle \Re \, \mathcal{U}\rangle}x \rangle}}.
\end{align}
As $\mathbb{E}_x\left[e^{\langle 0, X_t\rangle}\right]=\Phi(t,0) e^{\langle \psi(t,0),x \rangle }$ is real-valued and positive 
for all $t \in [0,T]$, we conclude -- due to Assumption~\ref{ass:statespace} and the continuity of the functions $t \mapsto \Phi(t,0)$ and $t \mapsto \psi(t,0)$ -- that $\Im \Phi(t,0)=0$ and $\Im \, \psi(t,0)=0$ for all $t \in [0,T]$.
In particular, the denominator in~\eqref{eq:Thetachar} is positive, which implies that $B_r \ni u \mapsto \Theta
(u,t,x)$ is a positive definite function for all $t \in [0,T]$ and $x \in D$. Moreover, 
since $\Pi_{ {\langle \Re\, \mathcal{U} \rangle}^\perp} \psi(t,0)$ is purely imaginary and thus in particular $0$ for all $t\in [0,T]$, it follows that 
\[
\Theta(0,t,x)=\exp\left(\langle \Pi_{ {\langle \Re\, \mathcal{U} \rangle}^\perp} \psi(t, 0), \Pi_{ {\langle \Re\, \mathcal{U} \rangle}^\perp} x\rangle\right)=1
\]
for all $t \in [0,T]$ and $x \in D$. This together with the positive definiteness of $u \mapsto \Theta(u,t,x)$ yields 
\begin{align}\label{eq:Thetaposdef}
| \Theta(u+v,t,x)-\Theta(u,t,x)\Theta(v,t,x)|^2 \leq 1, \quad u,v   \in B_{\frac{r}{2}},\, t \geq 0, x \in D.
\end{align}
Indeed, this inequality is obtained by computing the determinant of the positive semidefinite matrix
\[
\begin{pmatrix}
\Theta(0,t,x) & \overline{\Theta(u,t,x)} &\Theta(v,t,x)\\
\Theta(u,t,x) &  \Theta(0,t,x)  & \Theta(u+v,t,x)\\
\overline{\Theta(v,t,x)}& \overline{\Theta(u+v,t,x)} & \Theta(0,t,x)
\end{pmatrix}
\]
(compare, e.g.,~\citet[Lemma 3.2]{kst}).
Let us now define $y:=\Pi_{\langle \Re \, \mathcal{U}\rangle}x$ and
\begin{align*}
Z_1(u,v,y,t)&=\frac{\Phi(t,u+v)e^{\langle  \Pi_{\langle \Re \, \mathcal{U}\rangle}\psi(t,u+v), y\rangle}}{\Phi(t,0)e^{\langle  \Pi_{\langle \Re \, \mathcal{U}\rangle}\psi(t,0), y \rangle}}, \\
Z_2(u,v,y,t)&=\frac{\Phi(t,u)\Phi(t,v)e^{\langle  \Pi_{\langle \Re \, \mathcal{U}\rangle}(\psi(t,u)+\psi(t,v)), y \rangle}}{\Phi(t,0)^2e^{2\langle  \Pi_{\langle \Re \, \mathcal{U}\rangle}\psi(t,0),y \rangle}},\\
\beta_1(u,v,t)&=\Im (\Pi_{{\langle \Re\, \mathcal{U} \rangle}^\perp}\psi(t,u+v)),\\ 
\beta_2(u,v,t)&=\Im (\Pi_{{\langle \Re\, \mathcal{U} \rangle}^\perp}\psi(t,u))+\Im (\Pi_{{\langle \Re\, \mathcal{U} \rangle}^\perp}\psi(t,v)),\\
r_1(u,v,y,t)&=|Z_1|=\left| \frac{\Phi(t,u+v)}{\Phi(t,0)}\right|e^{\langle \Re (\Pi_{\langle \Re \, \mathcal{U} \rangle } (\psi(t, u+v)-\psi(t,0))),y \rangle},\\ 
r_2(u,v,y,t)&=|Z_2|=\left| \frac{\Phi(t,u)\Phi(t,v)}{\Phi(t,0)^2}\right|e^{\langle \Re (\Pi_{\langle \Re \, \mathcal{U} \rangle } (\psi(t, u)+\psi(t,v)-2\psi(t,0))),y \rangle},\\
\alpha_1(u,v,y,t)&=\arg(Z_1)= \arg \left(\frac{\Phi(t,u+v)}{\Phi(t,0)}\right)\\
&\quad \quad \quad \quad \quad \quad+\langle \Im( \Pi_{\langle \Re \, \mathcal{U} \rangle } \psi(t, u+v)),y \rangle, \\
\alpha_2(u,v,y,t)&=\arg(Z_2)=\arg \left(\frac{\Phi(t,u)\Phi(t,v)}{\Phi(t,0)^2}\right)\\
&\quad \quad \quad \quad \quad \quad+\langle \Im (\Pi_{\langle \Re \, \mathcal{U} \rangle } (\psi(t, u)+\psi(t,v)),y \rangle.
\end{align*}
Using~\eqref{eq:Thetaposdef} and the fact that $2r_1r_2 \leq r_1^2+r_2^2$, we then obtain
\begin{align*}
1&\geq\left|r_1e^{\im(\alpha_1+\langle \beta_1,\Pi_{{\langle \Re \, \mathcal{U} \rangle}^{\perp}} x\rangle)}-r_2e^{\im (\alpha_2+\langle \beta_2,\Pi_{{\langle \Re \, \mathcal{U} \rangle}^{\perp}} x\rangle)}\right|^2\\
&=r_1^2+r_2^2-2r_1r_2\cos(\alpha_1-\alpha_2+\langle \beta_1-\beta_2, \Pi_{ {\langle \Re\, \mathcal{U} \rangle}^\perp} x \rangle)\\
&\geq 2r_1r_2(1-\cos(\alpha_1-\alpha_2+\langle \beta_1-\beta_2, \Pi_{ {\langle \Re\, \mathcal{U} \rangle}^\perp} x \rangle)),
\end{align*}
whence
\begin{multline}\label{eq:contra}
r_1(u,v,y,t)r_2(u,v,y,t)\\
\times (1-\cos(\alpha_1(u,v,y,t)-\alpha_2(u,v,y,t)+\langle \beta_1(u,v,t)-\beta_2(u,v,t), \Pi_{ {\langle \Re\, \mathcal{U} \rangle}^\perp} x \rangle)) \leq \frac{1}{2}.
\end{multline}
Define now 
\begin{align*}
R(t,y)=\sup\Big\{\rho \in \left[0,\frac{r}{2}\right] \, | \, &r_1(u,v,y,t)r_2(u,v,y,t) > \frac{3}{4} \textrm{ for $u,v \in B_{\frac{r}{2}}$} \\
&\textrm{ with $\| u\|\leq \rho$ and $\|v \| \leq \rho$}\Big\}.
\end{align*}
Note that $R(t,y)> 0$ for all $(t,y) \in [0,T]\times \Pi_{\langle \Re \, \mathcal{U}\rangle}D$, which follows from 
the fact that $r_1(0,0,y,t)=r_2(0,0,y,t)=1$ and the continuity of 
\[
(u,v) \mapsto r_1(u,v,y,t)r_2(u,v,y,t).
\]
Continuity of $(t,y) \mapsto r_1(u,v,y,t)r_2(u,v,y,t)$ also implies that $(t,y) \mapsto R(t,y)$ is continuous. Set now $R(t):=\inf_{k} R(t,y_k)$ where $y_k=\Pi_{\langle \Re \, \mathcal{U}\rangle}x_k$. Then~\eqref{eq:realcomp} implies that $R(t)> 0$ for all $t \in [0,T]$. 

Let now $t$ be fixed and $g_1$ given by~\eqref{eq:gr}. Suppose that
\[
\langle \beta_1(u^{\ast},v^{\ast},t)-\beta_2(u^{\ast},v^{\ast},t), g_1 \rangle \neq 0
\]
for some $u^{\ast},v^{\ast} \in B_{R(t)}$. Then due to the continuity of $\beta_1$ and $\beta_2$, there exists some 
$\delta >0$ such that for all $ u, v $ in a neighborhood  $O_{\delta}$ of $(u^{\ast}, v^{\ast})$ defined by
\begin{align*}
 O_{\delta}=\Big\{u,v \in B_{R(t)} \,|\, &\|u-u^{\ast}\|< \delta, \|v-v^{\ast}\| <\delta \textrm{ and }\Big\},
\end{align*}
we also have
\begin{align}\label{eq:betasec}
\langle \beta_1(u,v,t)-\beta_2(u,v,t), g_1 \rangle  \neq 0.
\end{align}
Moreover, there exist some $(u,v) \in O_{\delta}$ and some $k \in \mathbb{N}$ such that
\begin{equation}\label{eq:cos0}
\begin{split}
&\cos(\alpha_1(u,v,y_k,t)-\alpha_2(u,v,y_k,t)+\langle \beta_1(u,v,t)-\beta_2(u,v,t), \Pi_{ {\langle \Re\, \mathcal{U} \rangle}^\perp} x_k \rangle)\\
&\quad = \cos\Bigg(\arg \left(\frac{\Phi(t,u+v)}{\Phi(t,0)}\right)-\arg \left(\frac{\Phi(t,u)\Phi(t,v)}{\Phi(t,0)^2}\right)\\
&\quad \quad \quad+\langle \Im (\Pi_{\langle \Re \, \mathcal{U} \rangle}(\psi(t,u+v)-\Im \, \psi(t,u) -\Im \, \psi(t,v))),y_k \rangle \\
&\quad \quad \quad + \langle \beta_1(u,v,t)-\beta_2(u,v,t), \Pi_{ {\langle \Re\, \mathcal{U} \rangle}^\perp} x_k \rangle \Bigg)\\
&\quad\leq \frac{1}{3},
\end{split}
\end{equation}
since $ y_k $ stays in a bounded set and $ \Pi_{{\langle \Re \, \mathcal{U} \rangle}^\perp} x_k $ 
explodes with highest divergence rate in direction $g_1$.

However, inequality~\eqref{eq:cos0} now implies that 
\begin{multline*}
 r_1(u,v,y_k,t)r_2(u,v,y_k,t)\\
 \times(1-\cos(\alpha_1(u,v,y_k,t)-\alpha_2(u,v,y_k,t)+\langle \beta_1(u,v,t)-\beta_2(u,v,t), \Pi_{ {\langle \Re\, \mathcal{U} \rangle}^\perp} x_k \rangle)) 
> \frac{1}{2},
\end{multline*}
which contradicts~\eqref{eq:contra}. Since $g_1$ corresponds to the direction of the highest divergence rate, 
we thus conclude that
\[
\langle \beta_1(u,v,t)-\beta_2(u,v,t) ,g_1 \rangle =\Im (\langle \psi(t,u+v)-\psi(t,u)-\psi(t,v), g_1 \rangle) =0
\]  
for all $u,v \in B_{R(t)}$. Continuity of $u \mapsto \psi(t,u)$ therefore implies that 
$ u \mapsto \langle \psi(t,u), g_1 \rangle$ is a linear function.
Hence there exists a continuous curve of (real) vectors $ \lambda_1(t) \in V $ such that
\[
\langle \psi(t,u), g_1 \rangle =\langle \lambda_1(t),u\rangle
\] 
for all $u \in B_{R(t)}$.

We can now proceed inductively for the remaining directions of divergence $g_i$. 
Indeed, assume that $\langle \beta_1(u,v,t)-\beta_2(u,v,t) ,g_i \rangle=0$ for all $i \leq r-1$ and  all $u,v \in B_{R(t)}$. Then repeating the above steps allows us to conclude that $\langle \beta_1(u,v,t)-\beta_2(u,v,t) ,g_r \rangle=0$ 
for all $u,v \in B_{R(t)}$, yielding the assertion.
\end{proof}

As introduced before, we denote by $p$ the dimension of $\Re\, \mathcal{U}$. Let now $m^{\ast}$ be fixed such that $\dim (\Re\, \mathcal{U}^{m^{\ast}})=p$. For some $r > 0$, we define $K$ to be the intersection of $\mathcal{V}^{m^{\ast}}$ with the closed ball with center $0$ and radius $r$ in $\mathcal{U}$, that is,
\begin{align}\label{eq:ball}
K:=\overline{B}(0,r) \cap \mathcal{V}^{m^{\ast}}:=\{u \in \mathcal{U}\,|\, {\|\Re u\|}^2+{\| \Im u\|}^2 \leq r^2\} \cap \mathcal{V}^{m^{\ast}},
\end{align}
where $\mathcal{V}^{m^{\ast}}$ is defined in~\eqref{eq:mathcalVm}.
Let now $(u_1, \ldots, u_{p})$ be linearly independent vectors in $K \cap \Re \, \mathcal{U} $ and 
let $(u_{p+1},\ldots, u_n)$ be linearly independent vectors in $\Pi_{ {\langle \Re\, \mathcal{U} \rangle}^\perp}K$.
Then, as a consequence of the fact that $\psi(0,u)=u$ for all $u \in \mathcal{U} \supset K$ and the continuity of $t \mapsto \psi(t,u)$, there exists some $\delta>0$ such that for every $t \in [0, \delta)$
\[
(\psi(t, u_1), \ldots, \psi(t, u_p))
\]
and 
\[
(\Pi_{ {\langle \Re\, \mathcal{U} \rangle}^\perp}\psi(t,u_{p+1}), \ldots, \Pi_{ {\langle \Re\, \mathcal{U} \rangle}^\perp} \psi(t,u_{n}))
\]
are linearly independent.

Moreover, since $(t,u)\mapsto \Phi(t,u)e^{\langle \psi(t,u), x\rangle }$ is jointly 
continuous on $\re_+ \times \mathcal{U}^{m^{\ast}}$, with $\Phi(0,u)=1$ and $\psi(0,u)=u$ (see Proposition~\ref{prop:PhiPsiproperties}), it follows that there exists some $\eta >0$ such that for all $t \in[0, \eta]$
\begin{align}\label{eq:phipsibounded}
\inf_{u \in K}|\Phi(t,u)|>c \textrm{ and } \sup_{u \in K} (\|\Re \, \psi(t, u)\|^2 + \|\Im \, \psi(t, u)\|^2) < C ,
\end{align}
with some positive constants $c$ and $C$. By fixing these constants and some linearly independent vectors in $K$ as described above, we define
\begin{align}\label{eq:varepsilon}
\varepsilon:=\min( \eta, \delta).
\end{align}

Furthermore, let $ t\geq 0$ be fixed. Then we denote by $I^{\mathcal{T}}_{t,\varepsilon}$ the set
\begin{align}\label{eq:setI}
I^{\mathcal{T}}_{t,\varepsilon}:=(t,t+\varepsilon) \cap \mathcal{T},
\end{align}
where $\mathcal{T}$ is defined in~\eqref{eq:mathcalT}.

\begin{proposition}\label{prop:Xcadlag}
Let $K$ and $I^{\mathcal{T}}_{t,\varepsilon}$ be the sets defined in~\eqref{eq:ball} and~\eqref{eq:setI}. Consider the function
$\psi$ given in~\eqref{eq:affineprocess} with the properties of Proposition~\ref{prop:PhiPsiproperties}. 
Let $t \geq 0$  be fixed and consider a sequence 
$(q_k)_{k \in \mathbb{N}}$ with values in $\mathbb{Q}_+$ 
such that $q_k \uparrow t$. Moreover, 
let $(x_{q_k})_{k \in \mathbb{N}}$ be a sequence with values 
in $D_{\Delta} \cup \{\infty \}$.\footnote{As mentioned at the beginning of Section~\ref{sec:affgen}, $\infty$ corresponds to a ``point at infinity'' and $D_{\Delta} \cup \{\infty\}$ 
is the one-point compactification of $D_{\Delta}$. If the state space $D$ is compact, we \emph{do not} adjoin $\{\infty\}$ and
only consider a sequence with values in $D_{\Delta}$.}
Then the following assertions hold:
\begin{enumerate}
\item \label{item:Nneq0}
If for all 
$(T,u) \in I^{\mathcal{T}}_{t,\varepsilon} \times K$
\begin{align}\label{eq:Mfinitelimit}
\lim_{k\to \infty}N_{q_k}^{T,u}:= \lim_{k\to \infty}e^{\langle\psi(T-q_k,u),x_{q_k}\rangle}
\end{align}
exists finitely valued and does not vanish, 
then also $\lim_{k\to \infty}x_{q_k}$ exists finitely valued.
\item \label{item:Neq0}
If there exist some $(T,u) \in I^{\mathcal{T}}_{t,\varepsilon} \times K$ such that
\begin{align*}
\lim_{k\to \infty}N_{q_k}^{T,u}:= \lim_{k\to \infty}e^{\langle\psi(T-q_k,u),x_{q_k}\rangle}=0,
\end{align*}
then we have $\lim_{k \to \infty}\|x_{q_k}\|=\infty$. 
\end{enumerate}
Moreover, let $(q^T_k)_{k \in \mathbb{N}, T \in  I^{\mathcal{T}}_{t,\varepsilon}}$ be a family of sequences
with values in $\mathbb{Q}_+ \cap [t,T]$ such that $q^T_k \downarrow t$ for every $T \in I^{\mathcal{T}}_{t,\varepsilon}$ and 
the additional property that for every $S,T \in I^{\mathcal{T}}_{t,\varepsilon}$, with $S < T$, there exists some index $N \in \mathbb{N}$
such that, for all $k \geq N$, $q^S_{k-N}=q^T_k$. Then the above assertions hold true for these right limits 
with $q_k$ replaced by $q_k^T$.
\end{proposition}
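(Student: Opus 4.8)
The plan is to read the quantities $\langle\psi(T-q_k,u),x_{q_k}\rangle$, with $u$ ranging over the test directions in $K$, as \emph{coordinates} of $x_{q_k}$: the real directions control the $\langle\Re\,\mathcal{U}\rangle$-component through the moduli $|N_{q_k}^{T,u}|$, while the purely imaginary directions control the $\langle\Re\,\mathcal{U}\rangle^{\perp}$-component through the phases $\arg N_{q_k}^{T,u}$. Throughout I fix one $T\in I^{\mathcal{T}}_{t,\varepsilon}$ (taken \emph{close to} $t$, which is possible since $\mathcal{T}$ is co-null), abbreviate $x_k:=x_{q_k}$ and $s_k:=T-q_k$, and record that $s_k\to s_\infty:=T-t\in(0,\varepsilon)\subseteq(0,\delta)$; hence the linear independence of $(\psi(s,u_1),\dots,\psi(s,u_p))$ and of $(\Pi_{\langle\Re\,\mathcal{U}\rangle^\perp}\psi(s,u_{p+1}),\dots,\Pi_{\langle\Re\,\mathcal{U}\rangle^\perp}\psi(s,u_n))$ holds along $(s_k)$ for large $k$, and by~\eqref{eq:phipsibounded} both $|\Phi|>c$ and $\|\Re\,\psi\|^2+\|\Im\,\psi\|^2<C$ uniformly on $K$ for these arguments.

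Part (ii) is immediate: if $N_{q_k}^{T,u}\to0$ for some $(T,u)\in I^{\mathcal{T}}_{t,\varepsilon}\times K$, then $\langle\Re\,\psi(s_k,u),x_k\rangle=\log|N_{q_k}^{T,u}|\to-\infty$ while $\|\Re\,\psi(s_k,u)\|\le\sqrt{C}$, so the Cauchy--Schwarz inequality forces $\|x_k\|\to\infty$ (recall $\|\Delta\|=\|\infty\|=\infty$). For part (i) I first treat the $\langle\Re\,\mathcal{U}\rangle$-component. For real $u\in K\cap\Re\,\mathcal{U}$ the transform $\Phi(s_k,u)e^{\langle\psi(s_k,u),\cdot\rangle}$ is real and positive on $D$, so exactly as in the proof of Lemma~\ref{lem:psilin} one gets $\Im\,\psi(s_k,u)=0$, i.e.\ $\psi(s_k,u)\in\langle\Re\,\mathcal{U}\rangle$, with $(\psi(s_\infty,u_1),\dots,\psi(s_\infty,u_p))$ a basis. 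Since $|N_{q_k}^{T,u_j}|=e^{\langle\psi(s_k,u_j),\Pi_{\langle\Re\,\mathcal{U}\rangle}x_k\rangle}$ converges to a positive limit, each $\langle\psi(s_k,u_j),\Pi_{\langle\Re\,\mathcal{U}\rangle}x_k\rangle$ converges; were $\Pi_{\langle\Re\,\mathcal{U}\rangle}x_k$ unbounded, a subsequential limiting direction $g\neq0$ would satisfy $\langle\psi(s_\infty,u_j),g\rangle=0$ for all $j$, contradicting the basis property, so $\Pi_{\langle\Re\,\mathcal{U}\rangle}x_k$ is bounded, and uniqueness of the limit determined by the basis gives its convergence.

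The main obstacle is to prevent $\Pi_{\langle\Re\,\mathcal{U}\rangle^\perp}x_k$ from escaping. Suppose $\limsup_k\|\Pi_{\langle\Re\,\mathcal{U}\rangle^\perp}x_k\|=\infty$. Together with the convergence just obtained, hypotheses~\eqref{eq:realcomp}--\eqref{eq:imcomp} of Lemma~\ref{lem:psilin} hold, so along a subsequence there are orthonormal divergence directions $g_i\in\langle\Re\,\mathcal{U}\rangle^\perp$ with $\langle x_k,g_i\rangle\to\pm\infty$ at non-increasing rates, a convergent remainder $w_k:=x_k-\Pi_{\langle\Re\,\mathcal{U}\rangle}x_k-\sum_i\langle x_k,g_i\rangle g_i$, and continuous $\lambda_i$ with $\langle\psi(s,u),g_i\rangle=\langle\lambda_i(s),u\rangle$ for $u\in B_{R(s)}$. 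For $u=\im w$ with $\|w\|$ small this yields $\arg N_{q_k}^{T,\im w}=\beta_k(w)+\langle a_k,w\rangle$, where $\beta_k(w)=\langle\Im\,\psi(s_k,\im w),\Pi_{\langle\Re\,\mathcal{U}\rangle}x_k+w_k\rangle$ converges and $a_k:=\sum_i\langle x_k,g_i\rangle\lambda_i(s_k)$. Dividing by $\langle x_k,g_1\rangle$ and passing to a further subsequence, $a_k/\langle x_k,g_1\rangle\to\mu:=\sum_ic_i\lambda_i(s_\infty)$ with $c_1=1$; taking $T$ so close to $t$ that $\lambda_i(s_\infty)$ is near $\lambda_i(0)=g_i$ makes $\mu\neq0$, whence $\|a_k\|\to\infty$. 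By hypothesis $N_{q_k}^{T,\im w}$ converges to a nonzero limit, so its phase converges, and since $e^{\im\beta_k(w)}$ converges too, $e^{\im\langle a_k,w\rangle}$ converges pointwise to a unimodular $f(w)$ for almost every small $w$ with $\im w\in K$. With $\|a_k\|\to\infty$ this is impossible: dominated convergence gives $\int_B|e^{\im\langle a_k,w\rangle}-f(w)|^2\,dw\to0$ over a small ball $B$, whereas $\int_B e^{\im\langle a_k,w\rangle}\overline{f(w)}\,dw\to0$ by the Riemann--Lebesgue lemma, and expanding the square forces $2\,|B|=0$. Hence $\Pi_{\langle\Re\,\mathcal{U}\rangle^\perp}x_k$ is bounded.

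Once $x_k$ is bounded I identify its limit: two subsequential limits $x,x'$ satisfy $\langle\psi(s_\infty,u),x-x'\rangle\in2\pi\im\mathbb{Z}$ for all $u\in K$, and by continuity along the segment from $0$ (contained in the convex set $K$) this discrete-valued map is constant, hence $0$. The real directions give $\Pi_{\langle\Re\,\mathcal{U}\rangle}(x-x')=0$; since $\Re\,\psi(s_\infty,\im w)\in\langle\Re\,\mathcal{U}\rangle$ (as $\psi(s_\infty,\im w)\in\mathcal{U}$ by Proposition~\ref{prop:PhiPsiproperties}~\ref{item:rangePsi}), the vectors $\Pi_{\langle\Re\,\mathcal{U}\rangle^\perp}\psi(s_\infty,\im w_j)$ are purely imaginary and independent, so $\Im\,\psi(s_\infty,\im w_j)$ span $\langle\Re\,\mathcal{U}\rangle^\perp$ and $\Pi_{\langle\Re\,\mathcal{U}\rangle^\perp}(x-x')=0$ follows; thus $x=x'$ and $x_k$ converges. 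For the right-limit statement the compatibility $q^S_{k-N}=q^T_k$ makes $(q^T_k)$ a single sequence decreasing to $t$, merely reindexed in $T$, so that $s_k=T-q^T_k\uparrow T-t\in(0,\varepsilon)\subseteq(0,\delta)$ and every continuity, boundedness and independence fact used above applies verbatim. I expect the third paragraph---the phase/Riemann--Lebesgue argument, together with securing $\mu\neq0$ by choosing $T$ near $t$ and the minor point that $\{w:\im w\in K\}$ has positive Lebesgue mass near the origin---to be the crux.
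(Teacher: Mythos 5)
Your proposal is correct, and its skeleton coincides with the paper's own proof: part (ii) is the same Cauchy--Schwarz observation using the uniform bound \eqref{eq:phipsibounded} on $\psi$ over $K$; the $\langle \Re\, \mathcal{U} \rangle$-component is recovered from the moduli $|N^{T,u}_{q_k}|$ tested against $p$ independent real directions (your direct ``modulus of a convergent sequence converges'' is a cleaner rendering of the paper's $\limsup$/$\liminf$, $r_1=r_2$, $\cos^2+\sin^2=1$ computation); and escape of the $\langle \Re\, \mathcal{U} \rangle^{\perp}$-component is excluded by Lemma~\ref{lem:psilin} plus Riemann--Lebesgue, exactly as in the paper. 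Within this skeleton you differ in two sub-arguments, both defensibly. First, in the Riemann--Lebesgue step the paper asserts the existence of a positive-measure set $M_{T^{\ast}}$ on which the limit of integrals is nonzero while Riemann--Lebesgue forces it to vanish; you instead show that the characters $e^{\im\langle a_k,w\rangle}$ with $\|a_k\|\to\infty$ cannot converge pointwise a.e.\ on a set of positive measure (the $L^2$ expansion), and -- unlike the paper, which only gestures at ``highest divergence rate in direction $g_1$'' -- you actually prove $\|a_k\|\to\infty$, via $\lambda_i(0)=g_i$, orthonormality of the $g_i$, nonnegative bounded ratios $c_i$, and $T$ close to $t$; this is a genuine improvement in explicitness. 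Second, and this is the real fork: to upgrade boundedness of $\Pi_{\langle \Re\, \mathcal{U} \rangle^{\perp}}x_{q_k}$ to convergence, the paper picks $T^{\ast}$ near $t$ and $u^{\ast}$ with small imaginary part so that the phases stay in $(-\pi,\pi)$ (estimate \eqref{eq:beta}) and hence converge, then separates points with $n-p$ independent directions; you compare two subsequential limits $x,x'$, obtain $\langle \Im\,\psi(s_\infty,u),x-x'\rangle \in 2\pi\mathbb{Z}$ for all admissible $u$, and kill the integer by continuity along the segment from $0$. Your variant is shorter, but it is the one place where you need more from $K$ than the paper does: you call $K$ convex, whereas by \eqref{eq:ball} $K=\overline{B}(0,r)\cap \mathcal{V}^{m^{\ast}}$ and the projection set \eqref{eq:mathcalVm} is not convex by construction, so the segment $[0,u]$ need not lie in $K$; this is repaired either by the benign reading of $\mathcal{V}^{m^{\ast}}$ as essentially all of $\mathcal{U}^{m^{\ast}}$ (which the paper itself tacitly uses when it selects bases and positive-measure subsets inside $K$), or by noting that a continuous function with values in the closed set $2\pi\mathbb{Z}$ on a dense subset of the segment and value $0$ at the origin must vanish -- but it should be said. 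Finally, the apparent circularity of fixing $T$ at the outset and later requiring $T$ close to $t$ is harmless, since the required closeness depends on $(x_{q_k})$ only through $g_i$ and $\lambda_i$, not on $T$, and the hypothesis supplies every $T\in I^{\mathcal{T}}_{t,\varepsilon}$; your treatment of the right-limit case matches the paper's one-line remark.
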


\begin{remark}
Concerning assertion (ii) of Proposition~\ref{prop:Xcadlag}, note that, e.g.~in the case $q_k \uparrow t$, 
$\lim_{k \to \infty}\|x_{q_k}\|=\infty$ corresponds either to explosion 
or to the possibility that there exists some index $N \in \mathbb{N}$ such that $x_{q_k}=\Delta$ for all $k \geq N$. 
In the latter case we also have, due to the convention $\|\Delta\|=\infty$, $\lim_{k \to \infty}\|x_{q_k}\|=\infty$.
\end{remark}

\begin{proof}
We start by proving the first assertion~\ref{item:Nneq0}.
Let $T \in  I^{\mathcal{T}}_{t,\varepsilon}$ be fixed and define for all $u \in K$
\[
A(u):=\limsup_{k \to \infty} \left\langle \Re \, \psi(T-q_k,u), x_{q_k}\right\rangle, \quad a(u):=\liminf_{k \to \infty}  \left\langle \Re \, \psi(T-q_k,u), x_{q_k}\right\rangle.
\]
Then there exist subsequences $(x_{q_{k_{m}}})$ and $(x_{q_{k_{l}}})$ such that\footnote{Note that these subsequences depend on $u$. For notational convenience we however suppress the dependence on $u$.}
\begin{align*}
A(u)&=\lim_{m \to \infty} \left\langle \Re \, \psi(T-q_{k_{m}},u), x_{q_{k_{m}}}\right\rangle,\\ 
a(u)&=\lim_{l \to \infty} \left\langle \Re \, \psi(T-q_{k_{l}},u), x_{q_{k_{l}}}\right\rangle.
\end{align*}
First note that $A(u)$ and $a(u)$ exist finitely valued for all $u \in K$. Indeed, if there is some $u \in K$ such that $A(u)=\pm \infty$ or $a(u)=\pm \infty$, then the limit of $N_{q_{k}}^{T,u}$ does not exist, or $\lim_{k\to \infty}N_{q_{k}}^{T,u}$ is either $0$ or $+\infty$, 
which contradicts assumption~\eqref{eq:Mfinitelimit}. We now define
\begin{align*}
r_1(u)&=\lim_{m \to \infty} \exp\left(\left\langle \Re \, \psi(T-q_{k_{m}},u),x_{q_{k_{m}}}\right\rangle\right), \\ 
r_2(u)&=\lim_{l \to \infty} \exp\left(\left\langle \Re \, \psi(T-q_{k_{l}},u),x_{q_{k_{l}}}\right\rangle\right),\\
\varphi_m(u)&= \left\langle \Im \, \psi(T-q_{k_{m}},u),x_{q_{k_{m}}}\right\rangle, \\
\varphi_l(u)&= \left\langle \Im \, \psi(T-q_{k_{l}},u),x_{q_{k_{l}}}\right\rangle.
\end{align*}
Then the limits of $\cos(\varphi_m(u))$, $\cos(\varphi_l(u))$, $\sin(\varphi_m(u))$ and $\sin(\varphi_l(u))$ necessarily exist and
\begin{align*}
r_1(u) \lim _{m\to \infty}\cos (\varphi_m(u))&=r_2(u) \lim_{l \to \infty}\cos(\varphi_l(u)),\\
r_1(u) \lim _{m \to \infty} \sin (\varphi_m(u))&=r_2(u) \lim _{l \to \infty}\sin( \varphi_l(u)).
\end{align*}
This yields $r_1(u)=r_2(u)$ for all $u \in K$, since 
\[
\lim_{m \to \infty}\left( \cos^2(\varphi_m(u))+\sin^2 (\varphi_m(u)\right)=\lim_{l \to \infty}\left( \cos^2(\varphi_l(u))+\sin^2 (\varphi_l(u)\right)=1.
\]
In particular, we have proved that 
\begin{align}\label{eq:limrealpart}
\lim_{k \to \infty} \left\langle \Re \, \psi(T-q_k,u), x_{q_k}\right\rangle
\end{align}
exists finitely valued and does not vanish for all $(T,u) \in I^{\mathcal{T}}_{t,\varepsilon}\times K$. Choosing linear independent vectors $(u_1,\ldots, u_p) \in K \cap \Re \, \mathcal{U}$ thus implies that 
\[
\lim_{k \to \infty}\Pi_{\langle \Re \, \mathcal{U} \rangle} x_{q_k}
\]
exists finitely valued.

Therefore it only remains to focus on $ \Pi_{ {\langle \Re\, \mathcal{U} \rangle}^\perp} x_{q_k}$. From the above, we know in particular that for all $(T,u) \in I^{\mathcal{T}}_{t,\varepsilon}\times K$
\begin{align}\label{eq:limitIm}
\lim_{k \to \infty} e^{\left\langle \Pi_{ {\langle \Re\, \mathcal{U} \rangle}^\perp}\psi(T-q_k,u),  \Pi_{ {\langle \Re\, \mathcal{U} \rangle}^\perp}x_{q_k}\right\rangle}
\end{align}
exists finitely valued and does not vanish. This implies that for all $(T,u) \in I^{\mathcal{T}}_{t,\varepsilon}\times K$
\begin{align}\label{eq:Impsi}
\Im \left \langle \Pi_{ {\langle \Re\, \mathcal{U} \rangle}^\perp}\psi(T-q_k,u) ,\Pi_{ {\langle \Re\, \mathcal{U} \rangle}^\perp}x_{q_k}\right \rangle=\alpha_k(T,u)+2 \pi z_k(T,u),
\end{align}
where $\alpha_k(T,u) \in [-\pi, \pi)$, $\alpha(T,u):=\lim_{k \to \infty} \alpha_k(T,u)$ 
exists finitely valued and $(z_k(T,u))_{k \in \mathbb{N}}$ is a sequence with values in $\mathbb{Z}$, 
which a priori does not necessarily have a limit and\slash or $\lim_{k \to \infty} z_k(T,u)=\pm \infty$. 

Let us first assume that 
\begin{align}\label{eq:assumptioncontra}
\limsup_{k \to \infty} \|\Pi_{{\langle \Re\, \mathcal{U} \rangle}^\perp}x_{q_k}\|=\infty.
\end{align}
Then we are exactly in the situation of Lemma~\ref{lem:psilin} and the above limit~\eqref{eq:limitIm} can be written as  
\begin{align*}
\lim_{k \to \infty} e^{\left(\sum_i \langle  \lambda_i(T-q_k),u\rangle \, \langle g_i, x_{q_k}\rangle + \langle \Pi_{{\langle \Re\, \mathcal{U} \rangle}^\perp}\psi(T-q_k,u) , x_{q_k} - \sum_i g_i \, \langle g_i, x_{q_k} \rangle \rangle \right) }
\end{align*}
for all $u \in  \Pi_{{\langle \Re\, \mathcal{U} \rangle}^\perp}K$ with $\|\Im u\| < P(T)$, where $P(T)$ is 
defined by $P(T):=\inf_{k} R(T-q_k)$ and $R$ and the directions $g_i$ are given in Lemma~\ref{lem:psilin} after possibly selecting a subsequence such that
$ x_{q_k} - \sum_i g_i \, \langle g_i, x_{q_k} \rangle $ converges as $ k \to \infty $. 
Note that due to the strict positivity and continuity of $R$, $P(T)$ is strictly positive as well. 
Furthermore, there exists some $T^{\ast} \in I^{\mathcal{T}}_{t,\varepsilon}$ and some set 
$M_{T^{\ast}} \subseteq \{u \in \Pi_{{\langle \Re\, \mathcal{U} \rangle}^\perp}K\, |\, \|\Im u \| < P(T^{\ast}), \; 
\exists\, i \textrm{ s.t. } \langle \lambda_i (T^{\ast}-t),u \rangle \neq 0 \}$ of positive finite measure such that 
\begin{align}\label{eq:RiemLeb}
\lim_{k \to \infty} \int_{M_T^{\ast}} e^{\left \langle \Pi_{{\langle \Re\, \mathcal{U} \rangle}^\perp}\psi(T^{\ast}-q_k,u) , x_{q_k} - \sum_i g_i \, \langle g_i, x_{q_k} \rangle \right \rangle  } 
e^{(\sum_i \langle  \lambda_i (T^{\ast}-q_k),u \rangle \, \langle g_i, x_{q_k}\rangle)}du \neq 0.
\end{align}
However, it follows from the Riemann-Lebesgue Lemma that the previous limit is zero, whence contradicting~\eqref{eq:RiemLeb}. We therefore conclude that 
\[
\limsup_{k \to \infty} \|\Pi_{ {\langle \Re\, \mathcal{U} \rangle}^\perp}x_{q_k}\|<\infty.
\]
This in turn implies that there exists some $(T^{\ast},u^{\ast}) \in I_{t, \varepsilon}^{\mathcal{T}} \times K$ and $N \in \mathbb{N}$ such that for all $k \geq N$
\[
\Im \left \langle \Pi_{ {\langle \Re\, \mathcal{U} \rangle}^\perp} \psi(T^{\ast}-q_k, u^{\ast}), \Pi_{ {\langle \Re\, \mathcal{U} \rangle}^\perp} x_{q_k}\right \rangle \in  (-\pi, \pi).
\]
Indeed, this follows from the fact that for every $u \in K $ and $\eta >0$ there exists some $T^{\ast} \in I_{t, \varepsilon}^{\mathcal{T}}$ and $N \in \mathbb{N}$ such that for all $k \geq N$
\begin{align}\label{eq:beta}
\|\Im(\Pi_{ {\langle \Re\, \mathcal{U} \rangle}^\perp} \psi(T^{\ast}-q_k, u)-\Pi_{ {\langle \Re\, \mathcal{U} \rangle}^\perp} u)\| \leq \eta.
\end{align}
For $u^{\ast}$ with $\|\Im(\Pi_{ {\langle \Re\, \mathcal{U} \rangle}^\perp} u^{\ast})\|$ sufficiently small and $k$ sufficiently large, we thus have
\begin{align*}
&\left|\left \langle \Pi_{ {\langle \Re\, \mathcal{U} \rangle}^\perp} \Psi(T^{\ast}-q_k, u^{\ast}), \Pi_{ {\langle \Re\, \mathcal{U} \rangle}^\perp}x_{q_k}\right \rangle\right|\\
&\quad \leq (\|\Im( \Pi_{ {\langle \Re\, \mathcal{U} \rangle}^\perp} u^{\ast})\| +\|\Im(\Pi_{ {\langle \Re\, \mathcal{U} \rangle}^\perp} \Psi(T^{\ast}-q_k, u^{\ast})-\Pi_{ {\langle \Re\, \mathcal{U} \rangle}^\perp} u^{\ast})\|)\\
&\quad \quad \times (\limsup_{k \to \infty} \|\Pi_{ {\langle \Re\, \mathcal{U} \rangle}^\perp}x_{q_k}\|+1)\\
&\quad < \pi.
\end{align*}
Hence,
\begin{align}\label{eq:limit}
\lim_{k \to \infty}\Im \left\langle \Pi_{ {\langle \Re\, \mathcal{U} \rangle}^\perp}\psi(T^{\ast}-q_k, u^{\ast}),\Pi_{ {\langle \Re\, \mathcal{U} \rangle}^\perp}x_{q_k}\right\rangle=\alpha(T^{\ast},u^{\ast}).
\end{align}
As we can find $n-p$ linear independent vectors $u_{p+1}, \ldots, u_n$ such that~\eqref{eq:limit} is satisfied, we conclude that
\[
\lim_{k \to \infty} \Pi_{ {\langle \Re\, \mathcal{U} \rangle}^\perp}x_{q_k}
\]
exists finitely valued. This proves assertion~\ref{item:Nneq0}.

Concerning the second statement, observe that we have 
\begin{align}\label{eq:lim0M}
 \lim_{k\to \infty} e^{\langle\psi(T-q_k,u),x_{q_k}\rangle }=0,
\end{align}
if either explosion occurs or if $x_{q_N}$ jumps to $\Delta$ for some $N \in \mathbb{N}$ and $x_{q_k}=\Delta$ for
all $k \geq N$. (This happens when the corresponding process is killed.) Indeed, since~\eqref{eq:lim0M} is equivalent to 
$\lim_{k\to \infty} e^{\langle\Re \, \psi(T-q_k,u),x_{q_k}\rangle }=0$ and as $\psi(T-t,u)$ is bounded on $K$
due to the definition of $I^{\mathcal{T}}_{t,\varepsilon}$, we necessarily have
\[
\lim_{k\to \infty} \|x_{q_k}\|=\infty.
\]
In the case of a jump to $\Delta$, this is implied by the conventions $\|\Delta\|=\infty$ and
$f(\Delta)=0$ for any other function.

Similar arguments yield the assertion concerning right limits.
\end{proof}

Using Proposition~\ref{prop:Mcadlag} and Proposition~\ref{prop:Xcadlag} above, we are now prepared to prove Theorem~\ref{th:cadlagversion} below, 
which asserts the existence of a c\`adl\`ag version of $X$. Before stating this result, let us recall the notion of the (usual) augmentation 
of $(\mathcal{F}_t^0)$ with respect to $\mathbb{P}_x$, which guarantees the c\`adl\`ag version to be adapted.

\begin{definition}[Usual augmentation]\label{def:augmentation}
We denote by $\mathcal{F}^x$ the \emph{completion} of $\mathcal{F}^0$ with respect to $\mathbb{P}_x$.
A sub-$\sigma$-algebra $\mathcal{G} \subset \mathcal{F}^x$ is called \emph{augmented} with respect to $\mathbb{P}_x$ if
$\mathcal{G}$ contains all $\mathbb{P}_x$-null-sets in $\mathcal{F}^x$. The augmentation of $\mathcal{F}_t^0$ with respect
to $\mathbb{P}_x$ is denoted by $\mathcal{F}_t^x$, that is, $\mathcal{F}_t^x=\sigma(\mathcal{F}_t^0, \mathcal{N}(\mathcal{F}^x))$,
where $\mathcal{N}(\mathcal{F}^x)$ denotes all $\mathbb{P}_x$-null-sets in $\mathcal{F}^x$.
\end{definition}

\begin{theorem}\label{th:cadlagversion}
Let $X$ be an affine process relative to $(\mathcal{F}_t^0)$. Then there exists a process $\widetilde{X}$ such that, 
for each $x \in D_\Delta$, $\widetilde{X}$ is a $\mathbb{P}_x$-version of $X$,
which is c\`adl\`ag in $D_{\Delta} \cup \{\infty\}$ (in $D_{\Delta}$ respectively if $D$ is compact) and an affine process relative to $(\mathcal{F}_t^x)$.
As before, $\infty$ corresponds to a ``point at infinity'' and $D_{\Delta} \cup \{\infty\}$ 
is the one-point compactification of $D_{\Delta}$, if $D$ is non-compact.
\end{theorem}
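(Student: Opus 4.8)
The plan is to fix an initial point $x$ and to build $\widetilde X$ as the pointwise right limit of $X$ through the rationals, using the regularized ``test martingales'' $M^{T,u}$ as a scaffold. By Proposition~\ref{prop:Mcadlag} there is a $\mathbb{P}_x$-null set, outside of which for Lebesgue-almost every $(T,u)\in(0,\infty)\times\mathcal{U}$ the trajectory $q\mapsto M_q^{T,u}(\omega)$ on $\mathbb{Q}_+\cap[0,T]$ is the restriction of a c\`adl\`ag function, so that its left and right limits along the rationals exist at every $t$. Working on the full-measure set $\widetilde\Omega$ of~\eqref{eq:omegatilde}, with $\widetilde\Omega\in\mathcal{F}^x$ and $\mathbb{P}_x[\widetilde\Omega]=1$, I would then transfer this regularity to $X$ and \emph{define}
\[
\widetilde X_t(\omega):=\lim_{\substack{q\in\mathbb{Q}_+\\ q\downarrow t}} X_q(\omega)
\]
wherever this limit exists in $D_\Delta\cup\{\infty\}$, and $\widetilde X_t(\omega):=\Delta$ otherwise. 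Since $M^{T,u}$ does not depend on the chosen $x$, the same recipe yields one process that is a candidate version under every $\mathbb{P}_x$.

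The transfer is carried out at each fixed $t$. Choosing a rational sequence $q_k\downarrow t$ (and separately $q_k\uparrow t$ for left limits), on $I^{\mathcal{T}}_{t,\varepsilon}\times K$ with $K$ and $\varepsilon$ as in~\eqref{eq:ball} and~\eqref{eq:varepsilon} the deterministic prefactor $\Phi(T-q_k,u)$ converges to $\Phi(T-t,u)$, which is bounded away from zero by~\eqref{eq:phipsibounded}; hence the existence of $\lim_k M_{q_k}^{T,u}$ is equivalent to that of $\lim_k N_{q_k}^{T,u}=\lim_k e^{\langle\psi(T-q_k,u),X_{q_k}\rangle}$. Proposition~\ref{prop:Xcadlag} then delivers the dichotomy that makes the construction work: either these limits are finite and non-vanishing for all admissible $(T,u)$, whence $\lim_k X_{q_k}$ exists finitely and lies in $D$ because $D$ is closed; or some limit vanishes, forcing $\|X_{q_k}\|\to\infty$, i.e.\ convergence to the adjoined point $\infty$ in case of explosion or to $\Delta$ in case of killing. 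Either way the limit exists in the compactification $D_\Delta\cup\{\infty\}$, so right and left limits of $X$ along the rationals exist at every $t$, and the $t$-wise right limit $\widetilde X$ is right-continuous with left limits, i.e.\ c\`adl\`ag.

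It then remains to check that $\widetilde X$ is a version and carries the affine structure. By the Markov property and stochastic continuity, $p_{s-t}(X_t,\cdot)\to\delta_{X_t}$ as $s\downarrow t$ forces $X_{q_k}\to X_t$ in $\mathbb{P}_x$-probability, while by construction $X_{q_k}\to\widetilde X_t$ $\mathbb{P}_x$-almost surely; uniqueness of limits in probability then gives $\widetilde X_t=X_t$ $\mathbb{P}_x$-a.s.\ for each $t$ and each $x$, so $\widetilde X$ is the desired $\mathbb{P}_x$-version. Because $\widetilde X_t=X_t$ almost surely and $\mathcal{F}_t^x$ contains all $\mathbb{P}_x$-null sets, $\widetilde X$ is $(\mathcal{F}_t^x)$-adapted; the Markov property~\eqref{eq:Markovprop}, stochastic continuity, and the exponential-affine form~\eqref{eq:affineprocess} all survive the passage to a version and to the augmented filtration, since they are determined by the transition function and the finite-dimensional distributions, which $\widetilde X$ shares with $X$. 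Hence $\widetilde X$ is again affine, now relative to $(\mathcal{F}_t^x)$.

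The hard part will be the measure-theoretic bookkeeping that turns the ``for almost every $(T,u)$'' regularity of Proposition~\ref{prop:Mcadlag} into the ``for all $(T,u)\in I^{\mathcal{T}}_{t,\varepsilon}\times K$'' hypotheses of Proposition~\ref{prop:Xcadlag}, on a single $\mathbb{P}_x$-null set valid for every $t$. This is exactly what the projected sets $\widetilde\Omega,\mathcal{T},\mathcal{V},\mathcal{V}^m$ in~\eqref{eq:omegatilde}--\eqref{eq:mathcalVm} are designed to supply: the measurable projection theorem gives $\widetilde\Omega\in\mathcal{F}^x$ with $\mathbb{P}_x[\widetilde\Omega]=1$, and restricting $T$ to $\mathcal{T}$ and $u$ to $K\subset\mathcal{V}^{m^\ast}$ isolates precisely those parameters along which the trajectory is regular, while $K$ is constructed to contain the finitely many linearly independent directions needed to invoke Proposition~\ref{prop:Xcadlag}. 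A second delicate point is the uniform handling of explosion versus killing, and ultimately the identification of the point at infinity with $\Delta$ so that the absorbing property~\eqref{eq:deltaabsorbing} extends to $\infty$; this has to be argued separately, consistently with the fact that $\Phi(T-t,u)$ is controlled away from zero only up to the lifetime of the process.
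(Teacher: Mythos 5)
Your proposal follows essentially the same route as the paper's proof: regularize the test martingales $M^{T,u}$ via Proposition~\ref{prop:Mcadlag}, transfer the existence of their limits along rationals to $X$ on $I^{\mathcal{T}}_{t,\varepsilon}\times K$ (where $\Phi$ is bounded away from zero) using the dichotomy of Proposition~\ref{prop:Xcadlag}, define $\widetilde X$ as the right limit through the rationals, and invoke stochastic continuity plus augmentation for the version and adaptedness claims. The paper's only refinement is to set $\widetilde X_t(\omega)=\Delta$ for \emph{all} $t$ on the single exceptional set where the limit fails to exist for some $t$ (a set shown to lie in $\mathcal{F}^0$ via Rogers--Williams), rather than pointwise in $(t,\omega)$ as you do, which ensures that every path --- not merely $\mathbb{P}_x$-almost every path --- is c\`adl\`ag and that $\widetilde X_t$ is measurable.
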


\begin{remark}\label{rem:explosion}
We here establish the existence of a c\`adl\`ag version $\widetilde{X}$ 
whose sample paths may take $\infty$ as left limiting value if $D$ is non-compact. A priori, we cannot identify $\widetilde{X}_{s-}(\omega)$ with $\Delta$, whenever $\|\widetilde{X}_{s-}(\omega)\|=\infty$. Indeed, $\widetilde{X}_t(\omega)$ might become finitely valued for some $t \geq s$. 
This issue is clarified in Theorem~\ref{th:Xcadlaginfinity} below,
where we prove that $\mathbb{P}_x$-a.s.~$\|\widetilde{X}_t\| =\infty$ for all $t \geq s$ and all $s > 0$ if $\|\widetilde{X}_{s-}\|=\infty$. 
In particular, this allows us to identify $\infty$ with $\Delta$.

In the case $\widetilde{X}_{s}=\Delta$, which happens when the process is killed, 
Assumption~\eqref{eq:deltaabsorbing} guarantees that $\widetilde{X}_t =\Delta$ for all $t \geq s$ and all $s > 0$.
\end{remark}

\begin{proof}
It follows from Proposition~\ref{prop:Mcadlag} that for every $\omega \in \widetilde{\Omega}$\footnote{Note that due to the measurable projection theorem, $\widetilde{\Omega} \in \mathcal{F}^x$.}, where $\mathbb{P}_x[\widetilde{\Omega}]=1$, 
\[
t \mapsto M_t^{T,u}(\omega):=\Phi(T-t,u)e^{\langle\psi(T-t,u),X_t(\omega)\rangle}, \quad t \in [0,T],
\]
is the restriction to $ \mathbb{Q}_+ \cap [0,T]$ of a c\`adl\`ag function for all $(T,u) \in \mathcal{T} \times  \mathcal{V}$.
Here, $\widetilde{\Omega}$, $\mathcal{T}$ and $\mathcal{V}$ are defined in~\eqref{eq:omegatilde},~\eqref{eq:mathcalT} and~\eqref{eq:mathcalV}.
Hence, for every $\omega \in \widetilde{\Omega}$ and all $(T,u) \in \mathcal{T} \times  \mathcal{V}$, the limits
\begin{align}\label{eq:Mlimits}
\lim_{{\substack{q \in \mathbb{Q}_+\\ q \uparrow t}}}M_q^{T,u}(\omega), \quad \lim_{{\substack{q \in \mathbb{Q}_+\\ q \downarrow t}}}M_q^{T,u}(\omega)
\end{align}
exist finitely valued for all $t \in [0,T]$.  

Let us now show that the same holds true for $X$. For notational convenience we first focus on left limits.
Consider the sets $K$ and $I^{\mathcal{T}}_{t,\varepsilon}$ 
defined in~\eqref{eq:ball} and~\eqref{eq:setI} and let $t \geq 0$ be fixed. 
Take some sequence $(q_k)_{k \in \mathbb{N}}$, as specified in Proposition~\ref{prop:Xcadlag}, such that $q_k \uparrow t$. Then there exists some $N \in \mathbb{N}$ such that, for all $k \geq N$ and $(T,u) \in I^{\mathcal{T}}_{t,\varepsilon} \times K$, $\Phi(T-q_k,u) \neq 0$. This is a consequence of the definition of $\varepsilon$ (see~\eqref{eq:varepsilon}). Thus we can divide $M_{q_k}^{T,u}(\omega)$ by $\Phi(T-q_k,u)$ for all $k \geq N$ and 
$(T,u) \in I^{\mathcal{T}}_{t,\varepsilon} \times K$. By the continuity of $t \mapsto \Phi(t,u)$ and~\eqref{eq:Mlimits}, 
it follows that, for every $\omega \in \widetilde{\Omega}$, the limit
\[
\lim_{k \to \infty}N_{q_k}^{T,u}(\omega):=\lim_{k \to \infty}e^{\langle\psi(T-q_k,u),X_{q_k}(\omega)\rangle} 
\]
exists finitely valued for all $(T,u) \in  I^{\mathcal{T}}_{t,\varepsilon}\times K$. 
From Proposition~\ref{prop:Xcadlag} we thus deduce that, for every $\omega \in \widetilde{\Omega}$, the limit
\[
\lim_{k \to \infty}X_{q_k}(\omega)
\]
exists either finitely valued or $\lim_{k \to \infty}\|X_{q_k}(\omega)\|=\infty$. 
Using similar arguments yields the same assertion for right limits. 
Hence we can conclude that $\mathbb{P}_x$-a.s.
\begin{align}\label{eq:Xcadlag}
\widetilde{X}_t=\lim_{\substack{q \in \mathbb{Q}_+\\ q \downarrow t}}X_q
\end{align}
exists for all $t \geq 0$ and defines a c\`adl\`ag function in $t$. 

Let now $\Omega_0$ be the set of $\omega \in \Omega$ for which the limit $\widetilde{X}_t(\omega)$
exists for every $t$ and defines a c\`adl\`ag function in $t$. 
Then, as a consequence of~\citet[Theorem II.62.7, Corollary II.62.12]{rogers}, $\Omega_0 \in \mathcal{F}^0$
and $\mathbb{P}_x[\Omega_0]=1$ for all $x \in D_{\Delta}$. For $\omega \in \Omega\setminus \Omega_0$, we set $\widetilde{X}_t(\omega)=\Delta$ for all $t$. Then $\widetilde{X}$ is a c\`adl\`ag process and $\widetilde{X}_t$ is $\mathcal{F}^0$-measurable for every $t \geq 0$.
Since $X$ is assumed to be stochastically continuous, we have $X_s \to X_t$ in probability as $s \to t$. 
Using the fact that convergence in probability implies almost sure convergence along a subsequence, we have
\begin{align}\label{eq:limXt}
\mathbb{P}_x\left[\lim_{\substack{q \in \mathbb{Q}_+\\ q \downarrow t}} X_{q}=X_t\right]=1.
\end{align}
By our definition of $\widetilde{X}_t$, the limit in~\eqref{eq:limXt} is equal to $\widetilde{X}_t$ on $\Omega_0$. 
Hence, for all $x \in D_{\Delta}$, we have
$\mathbb{P}_x[\widetilde{X}_t=X_t]$ for each $t$, implying that $\widetilde{X}$ is a version of $X$. This then also yields
\[
\mathbb{E}_x\left[e^{\langle u, \widetilde{X}_t\rangle}\right]= \mathbb{E}_x\left[e^{\langle u, X_t\rangle}\right]
\]
and augmentation of $(\mathcal{F}_t^0)$ with respect to $\mathbb{P}_x$ ensures that $\widetilde{X}_t \in \mathcal{F}_t^x$ for each $t$.
We therefore conclude that $\widetilde{X}$ is an affine process with respect to $(\mathcal{F}_t^x)$.
\end{proof}

If $D$ is non-compact, the c\`adl\`ag version~\eqref{eq:Xcadlag} on $D_{\Delta} \cup \{\infty\}$, still denoted by $X$, can be realized on the space $\Omega':=\mathbb{D}'(D_{\Delta} \cup \{\infty\})$ of c\`adl\`ag paths 
$\omega:\re_+\to D_{\Delta} \cup \{\infty\}$ with $\omega(t)=\Delta$ for $t\geq s$, whenever $\omega(s)=\Delta$. 
However, we still have to prove that we can identify $\infty $ with $\Delta$, as mentioned in Remark~\ref{rem:explosion}.  
In other words, we have to show that $\|\omega(t)\|=\infty$ for all $t\geq s$ if explosion occurs for some $s > 0$, that is, $\|\omega(s-)\|=\infty$. This is stated in the Theorem~\ref{th:Xcadlaginfinity} below.
For its proof let us introduce the following notations:

Due to the convention $\|\Delta\|=\infty$, we define the \emph{explosion time} by (see~\citet{filipoyor} for a similar definition)
\begin{align*}
\Texpl&:=\left\{
\begin{array}{rl}
T_{\Delta}, & \textrm{if } T_k' < T_{\Delta} \textrm{ for all } k,\\
\infty, &\textrm{if } T_k'=T_{\Delta} \textrm{ for some } k, 
\end{array}\right.
\end{align*}
where the stopping times $T_{\Delta}$ and $T_k'$ are 
given by
\begin{align*}
T_{\Delta}&:=\inf\{t>0\, |\, \|X_{t-}\|=\infty \textrm{ or } \|X_t\|=\infty \},\\
T_k'&:=\inf\{t \, |\, \| X_{t-}\|\geq k \textrm{ or } \| X_{t}\|\geq k\}, \quad k \geq 1.
\end{align*}

Moreover, we denote by $\relint(C)$ the \emph{relative interior} of a set $C$ defined by
\begin{align*}
\relint(C)= \{x \in C\,|\, \overline{B}(x,r) \cap \aff(C) \subseteq C \textrm{ for some } r>0\},
\end{align*}
where $\aff(C)$ denotes the affine hull of $C$.

\begin{lemma}\label{lem:nonemptyU}
Let $X$ be an affine process with c\`adl\`ag paths in $D_{\Delta} \cup \{\infty\}$ and let $x \in D$ be fixed. If 
\begin{align}\label{eq:posprobexp}
 \mathbb{P}_x[\Texpl< \infty]>0,
\end{align}
then $\relint(\Re \,\mathcal{U}) \neq \emptyset$ and we have $\mathbb{P}_x$-a.s.
\[
 \lim_{t \uparrow \Texpl}e^{\langle u, X_t \rangle}=0 
\]
for all $u \in \relint(\Re \,\mathcal{U})$. 
\end{lemma}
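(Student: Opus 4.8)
The plan is to reduce everything to the single geometric fact that the explosion of $X$ is \emph{visible in the directions spanned by} $\Re\,\mathcal{U}$, namely that $\|\Pi_{\langle\Re\,\mathcal{U}\rangle}X_t\|\to\infty$ as $t\uparrow\Texpl$ on $\{\Texpl<\infty\}$, and then to convert this into the stated exponential decay by an elementary support-function estimate. First I would record the structure of $\Re\,\mathcal{U}$: by~\eqref{eq:U} we have $\Re\,\mathcal{U}=\{v\in V\mid h(v)<\infty\}$ with $h(v):=\sup_{y\in D}\langle v,y\rangle$, which is a nonempty (it contains $0$ since $\im V\subseteq\mathcal{U}$) convex cone; hence $\relint(\Re\,\mathcal{U})\neq\emptyset$. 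Moreover $\langle v,X_t\rangle\le h(v)<\infty$ for every $v\in\Re\,\mathcal{U}$ whenever $X_t\in D$.

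Fix $u\in\relint(\Re\,\mathcal{U})$. By definition of the relative interior there is $\rho>0$ with $u+w\in\Re\,\mathcal{U}$ for all $w\in\langle\Re\,\mathcal{U}\rangle$, $\|w\|\le\rho$, and $C:=\sup_{\|w\|\le\rho}h(u+w)<\infty$. Applying the bound above to $v=u+w$ and taking the supremum over such $w$ gives
\begin{equation*}
\rho\,\|\Pi_{\langle\Re\,\mathcal{U}\rangle}X_t\|=\sup_{\substack{w\in\langle\Re\,\mathcal{U}\rangle,\ \|w\|\le\rho}}\langle w,X_t\rangle\le C-\langle u,X_t\rangle,
\end{equation*}
so that $\langle u,X_t\rangle\le C-\rho\,\|\Pi_{\langle\Re\,\mathcal{U}\rangle}X_t\|$. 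Thus once we know $\|\Pi_{\langle\Re\,\mathcal{U}\rangle}X_t\|\to\infty$ we immediately obtain $\langle u,X_t\rangle\to-\infty$, i.e.\ $e^{\langle u,X_t\rangle}\to0$, for every $u\in\relint(\Re\,\mathcal{U})$. In passing this also shows that $\Re\,\mathcal{U}$ cannot be a linear subspace (otherwise $\pm u\in\Re\,\mathcal{U}$ would keep $\Pi_{\langle\Re\,\mathcal{U}\rangle}X_t$ bounded), so $0\notin\relint(\Re\,\mathcal{U})$ and the assertion is non-vacuous.

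It therefore remains to prove $\|\Pi_{\langle\Re\,\mathcal{U}\rangle}X_t\|\to\infty$, and this is the main obstacle. Fix a deterministic $T$ with $\mathbb{P}_x[\Texpl<T]>0$ and work on $\{\Texpl<T\}$; since $X$ is c\`adl\`ag with left limit $\infty$ at $\Texpl$, we have $\|X_t\|\to\infty$ (as a full limit) as $t\uparrow\Texpl$, with $X_t\in D$ for $t<\Texpl$. Arguing by contradiction, suppose that along some $t_j\uparrow\Texpl$ the quantity $\|\Pi_{\langle\Re\,\mathcal{U}\rangle}X_{t_j}\|$ stays bounded. Passing to a subsequence we may assume $\Pi_{\langle\Re\,\mathcal{U}\rangle}X_{t_j}$ converges finitely while $\|\Pi_{\langle\Re\,\mathcal{U}\rangle^\perp}X_{t_j}\|\to\infty$; this is exactly the situation~\eqref{eq:realcomp}--\eqref{eq:imcomp} of Lemma~\ref{lem:psilin} applied to $x_k:=X_{t_k}$. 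It produces mutually orthogonal directions $g_i\in\langle\Re\,\mathcal{U}\rangle^\perp$ along which $\langle g_i,X_{t_j}\rangle$ diverges, together with continuous curves $\lambda_i$ satisfying $\langle\psi(s,u),g_i\rangle=\langle\lambda_i(s),u\rangle$ for $u$ in a ball.

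To reach a contradiction I would integrate the bounded c\`adl\`ag martingale
\begin{equation*}
M_t^{T,u}=\Phi(T-t,u)\,e^{\langle\psi(T-t,u),X_t\rangle}=\mathbb{E}_x\big[e^{\langle u,X_T\rangle}\,\big|\,\mathcal{F}_t\big]
\end{equation*}
against $u$ over a purely imaginary ball $B_r$ on which $\Phi(s,\cdot)\neq0$ for all $s\in[0,T]$ (as in Lemma~\ref{lem:psilin}(ii)), exactly in the spirit of the proof of Proposition~\ref{prop:Xcadlag}. Being bounded and c\`adl\`ag, $M^{T,u}$ has a finite left limit $M^{T,u}_{\Texpl-}=\lim_j M^{T,u}_{t_j}$, and since $\Phi(T-t_j,u)\neq0$ there, $N^{T,u}_{t_j}:=e^{\langle\psi(T-t_j,u),X_{t_j}\rangle}=M^{T,u}_{t_j}/\Phi(T-t_j,u)$ converges. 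For purely imaginary $u$ the linearity from Lemma~\ref{lem:psilin} makes $\langle\Re\,\psi(T-t_j,u),g_i\rangle$ vanish, so the real part of the exponent stays bounded and the limiting modulus $|M^{T,u}_{\Texpl-}|$ is strictly positive on a set of $u$ of positive measure. On the other hand the imaginary part of the exponent contains $\sum_i\langle\lambda_i(T-t_j),u\rangle\langle g_i,X_{t_j}\rangle$ with $\langle g_i,X_{t_j}\rangle\to\infty$ and $\lambda_i$ not identically zero, so the Riemann--Lebesgue lemma forces the $u$-integral of $N^{T,u}_{t_j}$ to tend to $0$, whereas dominated convergence forces it to the integral of the nonzero pointwise limit. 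This contradiction rules out the bounded subsequence, yields $\|\Pi_{\langle\Re\,\mathcal{U}\rangle}X_t\|\to\infty$, and hence the claim; the right-limit bookkeeping and the reduction from $\{\Texpl<\infty\}$ to $\{\Texpl<T\}$ are routine. The genuinely hard point is precisely this last step, i.e.\ showing that the explosion cannot hide entirely in the ``free'' directions $\langle\Re\,\mathcal{U}\rangle^\perp$, which is where the linearity of $\psi$ from Lemma~\ref{lem:psilin} and the oscillatory Riemann--Lebesgue cancellation are indispensable.
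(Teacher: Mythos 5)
Your plan follows the same two-step strategy as the paper: (i) show that explosion is visible in the directions of $\langle\Re\,\mathcal{U}\rangle$, i.e.\ $\|\Pi_{\langle\Re\,\mathcal{U}\rangle}X_t\|\to\infty$ as $t\uparrow\Texpl$, and (ii) convert this into $\langle u,X_t\rangle\to-\infty$ for $u\in\relint(\Re\,\mathcal{U})$ by a convexity argument. Your support-function estimate $\langle u,X_t\rangle\le C-\rho\,\|\Pi_{\langle\Re\,\mathcal{U}\rangle}X_t\|$ is a clean, quantitative substitute for Lemma~\ref{lem:uint} and is correct. The difference is in step (i): the paper gets it by invoking Propositions~\ref{prop:Mcadlag} and~\ref{prop:Xcadlag} as black boxes (on the explosion event $X$ cannot converge finitely, so by the dichotomy of Proposition~\ref{prop:Xcadlag} there is an $\omega$-dependent pair $(T(\omega),v(\omega))$ with $N^{T(\omega),v(\omega)}_t\to 0$, whence $\langle\Re\,\psi(T-t,v),X_t\rangle\to-\infty$ and the projection diverges), whereas you re-run the proof of Proposition~\ref{prop:Xcadlag} inside your argument with a \emph{single deterministic} $T$ --- and that is where a genuine gap appears.

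The gap is in the Riemann--Lebesgue step. What that lemma needs is that the coefficient vector of the phase, $v_j=\sum_i\langle g_i,X_{t_j}\rangle\,\lambda_i(T-t_j)$, satisfies $\|v_j\|\to\infty$. Your justification, ``$\lambda_i$ not identically zero'', only uses $\lambda_i(0)=g_i\neq0$; for a fixed deterministic $T$ what matters is the value of $\lambda_i$ at the \emph{random} argument $T-\Texpl(\omega)$, and this may vanish, or the relevant combination $\lambda_1+\sum_{i\ge2}c_i\lambda_i$ (with $c_i$ the limiting divergence ratios) may cancel. If, say, $\lambda_1(T-\Texpl)=0$ and $\lambda_1(T-t_j)\langle g_1,X_{t_j}\rangle$ happens to converge, the phase stays bounded, the $u$-integral of $N^{T,u}_{t_j}$ converges to a nonzero quantity, and no contradiction arises. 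This is exactly why Proposition~\ref{prop:Xcadlag} lets $T$ range over the interval $I^{\mathcal{T}}_{t,\varepsilon}=(t,t+\varepsilon)\cap\mathcal{T}$ just above the time $t$ at which the limit is taken, and why the paper's proof of the present lemma picks $T(\omega)$ depending on $\omega$: for $T-\Texpl$ small one has $\lambda_i(T-\Texpl)\approx g_i$, and the mutual orthogonality of the $g_i$ rules out cancellation. To repair your argument you must choose $T$ adaptively --- e.g.\ partition $\{\Texpl<\infty\}$ into countably many events $\{a<\Texpl<b\}$ with rational $b-a$ small and take $T$ just above $b$ inside the full-measure set $\mathcal{T}$ --- or simply apply Proposition~\ref{prop:Xcadlag} pointwise at $t=\Texpl(\omega)$ as the paper does. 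Two smaller slips in the same step: first, ``dominated convergence forces the integral to the integral of the nonzero pointwise limit'' is not by itself a contradiction, since the integral of a nowhere-vanishing function can be zero; one must first restrict to a positive-measure set of $u$ on which the argument of the limit varies by less than $\pi/2$ (this is the role of the set $M_{T^{\ast}}$ in the paper). Second, the existence of the left limit $M^{T,u}_{\Texpl-}$ does not follow from ``bounded and c\`adl\`ag'' --- a priori $M^{T,u}$ is only c\`adl\`ag on $[0,\Texpl)$, and a bounded c\`adl\`ag function on a half-open interval need not have a left limit at the endpoint; it follows from Doob's regularity theorem together with the Fubini argument of Proposition~\ref{prop:Mcadlag}, which is also what gives you a single null set outside of which the limits exist for almost all $(T,u)$ simultaneously, as your integration over $u$ requires.
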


\begin{proof}
Let us first establish that under Assumption~\eqref{eq:posprobexp}, $\relint{\Re\, \mathcal{U}} \neq \emptyset$.
To this end, we denote by $\Omegaexpl$ the set
\[
\Omegaexpl=\{\omega \in \Omega' \, |\, \Texpl(\omega) < \infty\}.
\]
Then it follows from Proposition~\ref{prop:Mcadlag} and~\ref{prop:Xcadlag} that, for 
$\mathbb{P}_x$-almost every $\omega \in \Omegaexpl$, there exist some 
$(T(\omega),v(\omega)) \in (\Texpl(\omega), \infty) \times \im V$ such that 
\[
\lim_{t \uparrow \Texpl(\omega)}\Phi(T(\omega)-t,v(\omega))\neq 0
\] 
and
\begin{align}\label{eq:lim0}
 \lim_{t \uparrow \Texpl(\omega)} N_{t}^{T(\omega),v(\omega)}(\omega)= \lim_{t \uparrow \Texpl(\omega)}e^{\langle\psi(T(\omega)-t,v(\omega)),X_t(\omega)\rangle}=0.
\end{align}
This implies that 
\begin{align}\label{eq:limmininf}
\lim_{t \uparrow \Texpl(\omega)}\langle \Re \, \psi(T(\omega)-t,v(\omega)),X_{t}(\omega)\rangle=-\infty,
\end{align} 
and in particular that $\mathcal{U} \ni \Re \, \psi(T(\omega)-\Texpl(\omega),v(\omega))\neq 0$, 
which proves the claim, since $\Re \,\mathcal{U}\subseteq\overline{\relint(\Re \, \mathcal{U})}$.

Furthermore, by~\eqref{eq:limmininf} we have
$\lim_{t \uparrow \Texpl(\omega)}\| \Pi_{\langle\Re\, \mathcal{U}\rangle} (X_t(\omega))\|=\infty$
and an application of Lemma~\ref{lem:uint} below yields the second assertion.
\end{proof}

\begin{lemma}\label{lem:uint}
Assume that $\relint(\Re\, \mathcal{U}) \neq \emptyset$ and that there exists some $D$-valued sequence $(x_k)_{k \in \mathbb{N}}$ such that 
\begin{align}\label{eq:projsequence}
\lim_{k \to \infty} \|\Pi_{\langle \Re\, \mathcal{U} \rangle}x_k\|=\infty.
\end{align}
Then $\lim_{k \to \infty} \langle u, x_k\rangle =-\infty$ for all $u \in \relint(\Re\, \mathcal{U})$.
\end{lemma}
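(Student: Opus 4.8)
The plan is to first reinterpret the set $\Re\,\mathcal{U}$ geometrically. Writing $u = w + \im v$ with $w, v \in V$, boundedness of $e^{\langle u, x\rangle}$ on $D$ is equivalent to $\langle w, x\rangle$ being bounded from above on $D$, so that
\[
\Re\,\mathcal{U} = \{w \in V \,|\, \sup_{x \in D}\langle w, x\rangle < \infty\}.
\]
This is a convex cone containing $0$, and consequently its affine hull agrees with its linear span $\langle \Re\,\mathcal{U}\rangle$, a $p$-dimensional subspace I will call $W$. Since $u$ lies in $W$, I may replace $x_k$ throughout by its projection $y_k := \Pi_{\langle \Re\,\mathcal{U}\rangle}x_k$, because $\langle u, x_k\rangle = \langle u, y_k\rangle$; the hypothesis~\eqref{eq:projsequence} then reads $\|y_k\| \to \infty$.

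Next I would exploit that $u \in \relint(\Re\,\mathcal{U})$. By the definition of the relative interior (with affine hull equal to $W$), there is some $\rho > 0$ with $u + w \in \Re\,\mathcal{U}$ for every $w \in W$ satisfying $\|w\| \leq \rho$. Fixing an orthonormal basis $e_1, \ldots, e_p$ of $W$, both $u + \rho e_i$ and $u - \rho e_i$ lie in $\Re\,\mathcal{U}$, so each of the linear functionals $\langle u \pm \rho e_i, \cdot\rangle$ is bounded above on $D$, say by constants $M_i^{\pm}$. Rearranging the two resulting inequalities sandwiches $\rho\langle e_i, x\rangle$ between $\langle u, x\rangle - M_i^-$ and $M_i^+ - \langle u, x\rangle$, which yields the two-sided control
\[
|\langle e_i, x\rangle| \leq \frac{1}{\rho}\big(|\langle u, x\rangle| + C'\big), \quad x \in D,
\]
with $C' := \max_i\max(|M_i^+|, |M_i^-|)$.

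Finally I would assemble these estimates. Since $\|y_k\|^2 = \sum_{i=1}^p \langle e_i, x_k\rangle^2$, the displayed bound gives $\|y_k\| \leq (\sqrt{p}/\rho)(|\langle u, x_k\rangle| + C')$; because $\|y_k\| \to \infty$, this forces $|\langle u, x_k\rangle| \to \infty$. As $u \in \Re\,\mathcal{U}$, the quantity $\langle u, x_k\rangle$ is bounded from above, so an unbounded modulus can only be produced by $\langle u, x_k\rangle \to -\infty$, which is the assertion. I expect the only genuinely delicate point to be the passage from the relative-interior ball to the two-sided bound relating $\|y_k\|$ and $|\langle u, x_k\rangle|$: here it is essential that the ball sits inside $\Re\,\mathcal{U}$ in \emph{all} directions of $W$, so that each coordinate functional $\langle e_i, \cdot\rangle$ is simultaneously controlled from above and below by $\langle u, \cdot\rangle$; the conversion of ``modulus tends to infinity'' into ``tends to $-\infty$'' then relies crucially on the one-sided boundedness built into membership of $u$ in $\Re\,\mathcal{U}$.
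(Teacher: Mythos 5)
Your proof is correct, and it takes a genuinely different route from the paper's, though both hinge on the same key mechanism: since $u$ lies in the relative interior of the convex cone $\Re\,\mathcal{U}$ (whose affine hull equals the linear span $\langle\Re\,\mathcal{U}\rangle$ because the cone contains $0$), one may perturb $u$ by small vectors of $\langle\Re\,\mathcal{U}\rangle$ without leaving $\Re\,\mathcal{U}$, i.e.\ without losing the property $\sup_{x\in D}\langle\cdot,x\rangle<\infty$. The paper runs this as a proof by contradiction: assuming $\limsup_k\langle u,x_k\rangle>-\infty$, it passes to a subsequence and extracts (by compactness) a single direction $g\in\langle\Re\,\mathcal{U}\rangle$ with $\langle g,x_k\rangle\to\infty$, notes that $u+\varepsilon g\in\Re\,\mathcal{U}$ for some $\varepsilon>0$, and derives the contradiction from
\[
\langle u,x_k\rangle\;\le\;\sup_{x\in D}\langle u+\varepsilon g,x\rangle-\varepsilon\langle g,x_k\rangle\;\longrightarrow\;-\infty.
\]
You instead argue directly: perturbing $u$ by $\pm\rho e_i$ along an orthonormal basis of $\langle\Re\,\mathcal{U}\rangle$ yields the uniform two-sided estimate $|\langle e_i,x\rangle|\le\rho^{-1}\left(|\langle u,x\rangle|+C'\right)$ valid on all of $D$, hence $\|\Pi_{\langle\Re\,\mathcal{U}\rangle}x\|\le(\sqrt{p}/\rho)\left(|\langle u,x\rangle|+C'\right)$, so the hypothesis forces $|\langle u,x_k\rangle|\to\infty$, and the one-sided boundedness built into $u\in\Re\,\mathcal{U}$ converts this into $\langle u,x_k\rangle\to-\infty$. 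Your version avoids subsequence extraction and the contradiction scaffolding, and it delivers something slightly stronger than the lemma itself, namely a quantitative inequality on all of $D$ comparing the projection norm with $|\langle u,x\rangle|$; the paper's argument is shorter on the page but needs the compactness step to manufacture the diverging direction $g$, and only produces information along the given sequence.
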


\begin{proof}
Suppose by contradiction that there exists some $u\in \relint(\Re\, \mathcal{U})$ such that 
\[
\limsup_{k \to \infty}\langle u, x_k\rangle >-\infty.
\]
Then there exists a subsequence, still denoted by $(x_k)$, such that 
\begin{align}\label{eq:contradiction}
\lim_{k \to \infty}\langle u, x_k\rangle >-\infty.
\end{align}
and due to~\eqref{eq:projsequence} some direction 
$g \in \langle \Re\, \mathcal{U} \rangle$ such that
\begin{align}\label{eq:directiondiv}
\lim_{k \to \infty}\langle g, x_k \rangle =\infty. 
\end{align}
Moreover, since $u \in \relint(\Re\, \mathcal{U})$, there exists some $\varepsilon >0 $ such that
$u +\varepsilon g \in \relint(\Re \, \mathcal{U})$. By the definition of $\mathcal{U}$, we have
\[
\sup_{x \in D} \langle u +\varepsilon g, x\rangle < \infty.
\]
Due to~\eqref{eq:directiondiv}, this however implies that 
\[
\lim_{k \to \infty}\langle u,x_k \rangle =-\infty
\]
and contradicts~\eqref{eq:contradiction}. 
\end{proof}

\begin{theorem}\label{th:Xcadlaginfinity}
Let $X$ be an affine process with c\`adl\`ag paths in $D_{\Delta}\cup \{\infty\}$. Then, for every $x \in D$, the following assertion holds $\mathbb{P}_x$-a.s.: If
\begin{align}\label{eq:assumption}
\|X_{s-}\|=\infty, 
\end{align}
then $\|X_t\| =\infty$ for all $t\geq s$ and $s \geq 0$. Identifying $\infty$ with $\Delta$, then yields
$X_t=\Delta$ for all $t \geq s$.
\end{theorem}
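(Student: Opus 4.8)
The plan is to read this ``no return from infinity'' statement off from the absorption at zero of a single nonnegative martingale built from the process' own Fourier--Laplace transform. The implication is vacuous unless explosion occurs with positive probability, so I assume $\mathbb{P}_x[\Texpl<\infty]>0$; since $\|X_{s-}\|=\infty$ means the norm diverges through a left limit, the relevant $s$ is a \emph{gradual} explosion time, i.e.\ $s=\Texpl=T_{\Delta}$. Lemma~\ref{lem:nonemptyU} then gives $\relint(\Re\,\mathcal{U})\neq\emptyset$, the convergence $e^{\langle u,X_t\rangle}\to 0$ for all $u\in\relint(\Re\,\mathcal{U})$ as $t\uparrow\Texpl$, and (from its proof) $\lim_{t\uparrow\Texpl}\|\Pi_{\langle\Re\,\mathcal{U}\rangle}X_t\|=\infty$. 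I fix $u_0\in\relint(\Re\,\mathcal{U})$ and, for a deterministic horizon $T$, consider
\[
M_t:=\mathbb{E}_x\!\left[e^{\langle u_0,X_T\rangle}\mid\mathcal{F}_t^x\right]=\Phi(T-t,u_0)\,e^{\langle\psi(T-t,u_0),X_t\rangle},\qquad t\in[0,T],
\]
which (by the Markov property and~\eqref{eq:affineprocess}, exactly as for the test martingales $M^{T,u}$) is a finite nonnegative martingale, and which is càdlàg both by right-continuity of $(\mathcal{F}_t^x)$ and because $\Phi,\psi$ are continuous (Proposition~\ref{prop:PhiPsiproperties}) and $X$ is càdlàg.

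Since $\psi(0,u_0)=u_0\in\relint(\Re\,\mathcal{U})$ and the latter is relatively open in $\langle\Re\,\mathcal{U}\rangle$, I first fix $\rho>0$ so small that $\Re\,\psi(h,u_0)$ stays in a fixed compact subset of $\relint(\Re\,\mathcal{U})$ and $\Phi(h,u_0)\neq 0$ for $h\in[0,\rho]$, and I cover $\re_+$ by the intervals $[n\rho/2,(n+1)\rho/2)$, assigning to each the horizon $T=n\rho/2+\rho$; it then suffices to treat an $s$ in the corresponding interval, for which $T-t\in(0,\rho]$ as $t\uparrow s$. The crux is to prove $M_{s-}=0$ on $\{\Texpl=s\}$. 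Writing $v_t:=\Re\,\psi(T-t,u_0)\to w_\ast:=\Re\,\psi(T-s,u_0)\in\relint(\Re\,\mathcal{U})$, I choose $\eta>0$ with $\overline B(w_\ast,2\eta)\cap\langle\Re\,\mathcal{U}\rangle\subset\relint(\Re\,\mathcal{U})$ and test against $g_t:=\Pi_{\langle\Re\,\mathcal{U}\rangle}X_t/\|\Pi_{\langle\Re\,\mathcal{U}\rangle}X_t\|$: for $t$ close to $s$,
\[
\langle v_t,X_t\rangle=\langle v_t+\eta g_t,X_t\rangle-\eta\,\|\Pi_{\langle\Re\,\mathcal{U}\rangle}X_t\|\le C-\eta\,\|\Pi_{\langle\Re\,\mathcal{U}\rangle}X_t\|\longrightarrow-\infty,
\]
where $C:=\sup\{\langle w,x\rangle:x\in D,\ w\in\langle\Re\,\mathcal{U}\rangle,\ \|w-w_\ast\|\le 2\eta\}$ is finite because the support function $w\mapsto\sup_{x\in D}\langle w,x\rangle$ of the convex cone $\Re\,\mathcal{U}=\{w\in V:\sup_{x\in D}\langle w,x\rangle<\infty\}$ is continuous on its relative interior. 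As $\Phi(T-t,u_0)$ stays bounded, $M_{s-}=0$ follows.

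I then invoke the absorption at zero of nonnegative supermartingales: from $\mathbb{E}_x[M_t\mid\mathcal{F}_{\sigma-}]\le M_{\sigma-}$ and $M_t\ge 0$ one gets that $M_{\sigma-}=0$ at a stopping time $\sigma$ forces $M_t=0$ for all $t\ge\sigma$. Applying this at $\sigma=\Texpl=s$ (an $(\mathcal{F}_t^x)$-stopping time, so optional sampling applies) gives $M_t=0$ for every $t\in[s,T]$; as $\Phi(T-t,u_0)\neq 0$ there, necessarily $\langle\Re\,\psi(T-t,u_0),X_t\rangle=-\infty$, which for the finite vector $\Re\,\psi(T-t,u_0)$ means $\|X_t\|=\infty$ on $[s,T)$. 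A continuation argument upgrades this to all $t\ge s$: if $\tau^\ast:=\sup\{T'\ge s:\|X_t\|=\infty\ \forall t\in[s,T')\}$ were finite, then $\|X_{\tau^\ast-}\|=\infty$, and rerunning the local argument at $\tau^\ast$ would extend the set of such times by at least $\rho/2$, contradicting maximality. Taking the countable union over the horizons $T=n\rho/2+\rho$ produces a single $\mathbb{P}_x$-null set outside of which the conclusion holds for every explosion time $s$. Finally, identifying the point at infinity with $\Delta$ and using that $\Delta$ is absorbing (Assumption~\eqref{eq:deltaabsorbing}) yields $X_t=\Delta$ for all $t\ge s$.

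The hard part is precisely the identity $M_{s-}=0$. The time reversal in $M_t$ forces the test direction $\Re\,\psi(T-t,u_0)$ to move with $t$ while $\|X_t\|\to\infty$, so a naive split into a fixed direction plus a vanishing perturbation runs into a $0\cdot\infty$ indeterminacy that the pointwise Lemma~\ref{lem:uint} cannot settle. The resolution is the uniform estimate displayed above, which rests on convexity of $\Re\,\mathcal{U}$ through continuity of its support function on $\relint(\Re\,\mathcal{U})$; this is the one genuinely new ingredient. The remaining bookkeeping — the use of countably many deterministic horizons to deal with the random explosion time and the left-limit form of supermartingale absorption — is routine once the key estimate is in hand.
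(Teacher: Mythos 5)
Your proposal follows the paper's own strategy: for a real direction $u_0\in\relint(\Re\,\mathcal{U})$ (nonempty by Lemma~\ref{lem:nonemptyU}) you form the nonnegative c\`adl\`ag martingale $M_t=\Phi(T-t,u_0)e^{\langle \psi(T-t,u_0),X_t\rangle}$, show its left limit vanishes at the explosion time, and invoke absorption at zero of nonnegative supermartingales (the paper cites \citet[Theorem II.78.1]{rogers}; your optional-sampling variant at $\sigma-$ is only valid because $\Texpl$ is \emph{predictable}, announced by $T_k\wedge k$ --- for a general stopping time $\mathbb{E}_x[M_t\,|\,\mathcal{F}_{\sigma-}]\le M_{\sigma-}$ can fail, so this deserves a word). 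Where you genuinely improve on the paper is the crux $M_{s-}=0$: the paper attributes it to Lemma~\ref{lem:nonemptyU}, which via Lemma~\ref{lem:uint} only yields $\langle w,X_t\rangle\to-\infty$ for each \emph{fixed} $w\in\relint(\Re\,\mathcal{U})$, while the test direction $\psi(T-t,u_0)$ moves with $t$; your uniform bound $\langle v_t,X_t\rangle\le C-\eta\,\|\Pi_{{\langle \Re\,\mathcal{U}\rangle}}X_t\|$, which rests on convexity of $\Re\,\mathcal{U}=\{w\in V\,|\,\sup_{x\in D}\langle w,x\rangle<\infty\}$ and continuity of the support function of $D$ on the relative interior of its domain, closes this $0\cdot\infty$ indeterminacy correctly.

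There is, however, a genuine gap in your continuation beyond the first horizon. Rerunning the ``local argument'' at $\tau^{\ast}$ is impossible: your proof that the left limit of the martingale vanishes requires the path to approach the critical time \emph{from inside} $D$ with $\|\Pi_{{\langle \Re\,\mathcal{U}\rangle}}X_t\|\to\infty$, but on $[s,\tau^{\ast})$ the path sits in $\{\Delta,\infty\}$, so that estimate is inapplicable; and you cannot evaluate the conditional-expectation martingale $\mathbb{E}_x[e^{\langle u_0,X_{T''}\rangle}\,|\,\mathcal{F}_t]$ on $\{X_t=\infty\}$ without already knowing that no return from $\infty$ is possible --- which is exactly the statement being proved, so the step is circular. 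The fix is cheap and uses an ingredient you mention only as a final flourish: absorption on the first horizon gives $M_{T_n}=e^{\langle u_0,X_{T_n}\rangle}=0$ at the \emph{deterministic} time $T_n\ge s+\rho/2$, hence $X_{T_n}\notin D$; since $X$ is a version of the $D_{\Delta}$-valued Markov process, $\mathbb{P}_x[X_{T_n}\in D_{\Delta}]=p_{T_n}(x,D_{\Delta})=1$, so $X_{T_n}=\Delta$ almost surely on the event $\{\Texpl\in[n\rho/2,(n+1)\rho/2)\}$; Assumption~\eqref{eq:deltaabsorbing} then propagates $X_t=\Delta$ for all $t\ge T_n$, which combined with $\|X_t\|=\infty$ on $[s,T_n]$ gives the claim for all $t\ge s$, and the countable union over $n$ finishes as you describe. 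In other words, the identification of $\infty$ with $\Delta$ at a fixed time, plus the absorbing property of $\Delta$, is not cosmetic: it is the mechanism that carries the conclusion past the horizon, and it must replace the $\tau^{\ast}$ argument.
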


\begin{proof}
Let $x \in D$ be fixed and let $u \in \relint(\Re \,\mathcal{U})$. 
Note that by Lemma~\ref{lem:nonemptyU} $\relint(\Re \,\mathcal{U}) \neq \emptyset$
and that $\Phi(t,u)$ and $\psi(t,u)$ are real-valued functions with values in $\re_{++}$ and
$\Re \,\mathcal{U}$, respectively. 
Take now some $T>0$ and $\delta>0$ such that 
\[
 \mathbb{P}_x\left[ T-\delta < \Texpl \leq T \right]>0,
\]
and $\psi(t,u) \in \relint(\Re \,\mathcal{U})$ for all $t < \delta$. 
Consider the martingale
\[
 M_t^{T,u}=\Phi(T-t,u)e^{\langle \psi(T-t,u), X_t \rangle }, \quad t \leq T,
\]
which is clearly nonnegative and has c\`adl\`ag paths. Moreover, by the choice of $\delta$, it follows from 
Lemma~\ref{lem:nonemptyU} and the conventions $\| \Delta \| =\infty$ and $f(\Delta)=0$ for any other function that $\mathbb{P}_x$-a.s.
\begin{align}\label{eq:Z0}
 M_{s-}^{T,u}=0,  
\quad s \in (T-\delta, T], 
\end{align}
if and only if $\|X_{s-}\|=\infty$ 
for $s \in (T-\delta, T]$. 
We thus conclude using~\citet[Theorem II.78.1]{rogers} that $\mathbb{P}_x$-a.s.~$M_t^{T,u}=0$ for all $t \geq s$, which in turn 
implies that $\|X_{t}\|=\infty$ for all $t \geq s$. This allows us to identify $\infty$ with $\Delta$ and we obtain
$X_t=\Delta$ for all $t \geq s$. Since $T$ was chosen arbitrarily, the assertion follows.
\end{proof}

Combining Theorem~\ref{th:cadlagversion} and Theorem~\ref{th:Xcadlaginfinity} and using Assumption~\eqref{eq:deltaabsorbing}, we thus obtain the following statement:

\begin{corollary}\label{col:cadlag}
Let $X$ be an affine process relative to $(\mathcal{F}_t^0)$. Then there exists a process $\widetilde{X}$ such that, for each $x \in D_\Delta$, $\widetilde{X}$ is a $\mathbb{P}_x$-version of $X$, which 
is an affine process relative to $(\mathcal{F}_t^x)$, whose paths are c\`adl\`ag and satisfy $\mathbb{P}_x$-a.s.~$\widetilde{X}_t=\Delta$ for $t\geq s$, whenever $\|\widetilde{X}_{s-}\|=\infty$ or 
$\|\widetilde{X}_s\|=\infty$.
\end{corollary}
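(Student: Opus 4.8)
The plan is to assemble the three ingredients that have already been prepared, since the corollary is a pure combination statement. First I would invoke Theorem~\ref{th:cadlagversion} to obtain the process $\widetilde{X}$: for each $x \in D_\Delta$ it is a $\mathbb{P}_x$-version of $X$, it has c\`adl\`ag paths in $D_\Delta \cup \{\infty\}$ (in $D_\Delta$ if $D$ is compact), and it is again an affine process relative to the augmented filtration $(\mathcal{F}_t^x)$. All the \emph{structural} assertions of the corollary --- version, c\`adl\`ag, affine --- are therefore already contained in that theorem, and the sole remaining task is the absorption statement: $\mathbb{P}_x$-a.s., $\widetilde{X}_t = \Delta$ for all $t \geq s$ whenever $\|\widetilde{X}_{s-}\| = \infty$ or $\|\widetilde{X}_s\| = \infty$.

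Next I would split this absorption statement according to the two mechanisms by which a path can reach infinite norm. If the left limit explodes, i.e.\ $\|\widetilde{X}_{s-}\| = \infty$, then Theorem~\ref{th:Xcadlaginfinity} applies verbatim and yields $\|\widetilde{X}_t\| = \infty$ for all $t \geq s$; after identifying the point at infinity $\infty$ with the cemetery $\Delta$, this gives $\widetilde{X}_t = \Delta$ for all $t \geq s$. If instead the value itself has infinite norm, $\|\widetilde{X}_s\| = \infty$, then --- the identification $\infty \equiv \Delta$ now being in force --- the only state of infinite norm in $D_\Delta$ is $\Delta$ itself, so $\widetilde{X}_s = \Delta$ (the killing case), and Assumption~\eqref{eq:deltaabsorbing}, the absorbing property of $\Delta$, immediately yields $\widetilde{X}_t = \Delta$ for all $t \geq s$. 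The degenerate initial condition $x = \Delta$ is handled directly by~\eqref{eq:deltaabsorbing}. Finally I would intersect the $\mathbb{P}_x$-full-measure sets produced by Theorem~\ref{th:cadlagversion} and Theorem~\ref{th:Xcadlaginfinity} so that all conclusions hold simultaneously outside a single $\mathbb{P}_x$-null set.

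The only point requiring genuine care --- essentially all the real work having been done in Theorem~\ref{th:Xcadlaginfinity} --- is to make sure the identification of $\infty$ with $\Delta$ is globally consistent, so that a path value $\widetilde{X}_s = \infty$ reached at some \emph{random} time by a jump from a finite left limit, which is not literally covered by the hypothesis of Theorem~\ref{th:Xcadlaginfinity}, still leads to absorption. To close this I would argue as follows: set $\tau = \inf\{t > s : \|\widetilde{X}_t\| < \infty\}$. If $\tau < \infty$, then on $(s,\tau)$ the path takes values in $\{\Delta, \infty\}$, so by existence of c\`adl\`ag left limits together with the absorbing property of $\Delta$ one has $\|\widetilde{X}_{\tau-}\| = \infty$; Theorem~\ref{th:Xcadlaginfinity} applied at $\tau$ then forces $\widetilde{X}_\tau = \Delta$, contradicting $\|\widetilde{X}_\tau\| < \infty$. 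Hence $\tau = \infty$, i.e.\ $\|\widetilde{X}_t\| = \infty$ for all $t \geq s$, which is consistent with and justifies the identification $\infty \equiv \Delta$ from time $s$ on. Once this reduction is in place, every occurrence of infinite norm --- as a left limit, as a genuine point at infinity, or as $\Delta$ --- funnels into absorption, and combining with~\eqref{eq:deltaabsorbing} completes the proof.
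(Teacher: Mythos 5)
Your overall architecture is exactly the paper's: the proof given there is literally the one-sentence combination of Theorem~\ref{th:cadlagversion}, Theorem~\ref{th:Xcadlaginfinity} and Assumption~\eqref{eq:deltaabsorbing}, which is the content of your first two paragraphs. You also deserve credit for spotting the one configuration this combination does not literally cover --- $\widetilde{X}_s=\infty$ reached by a jump from a finite left limit, so that $\|\widetilde{X}_{s-}\|<\infty$ and the hypothesis of Theorem~\ref{th:Xcadlaginfinity} is not satisfied at $s$ --- a point the paper itself passes over in silence.

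However, your patch for that case has a genuine hole: the stopping-time argument only produces a contradiction when $\tau>s$, and nothing excludes $\tau=s$, i.e.\ finite-norm values accumulating at $s$ from the right. Concretely, a path with $\widetilde{X}_{s-}$ finite, $\widetilde{X}_s=\infty$ and $\widetilde{X}_t=1/(t-s)\in D=\re_+$ for $t>s$ is c\`adl\`ag in the one-point compactification, has $\|\widetilde{X}_{t-}\|<\infty$ for \emph{every} $t>s$, and hence never triggers the hypothesis of Theorem~\ref{th:Xcadlaginfinity}; your argument gives nothing for such paths, yet they are precisely what the corollary must exclude. The case cannot be closed from the \emph{statement} of Theorem~\ref{th:Xcadlaginfinity}, only from its \emph{proof}: for $u\in\relint(\Re\,\mathcal{U})$ the nonnegative c\`adl\`ag martingale $M_t^{T,u}=\Phi(T-t,u)e^{\langle\psi(T-t,u),\widetilde{X}_t\rangle}$ vanishes at time $s$ itself (not merely as a left limit) whenever $\|\widetilde{X}_s\|=\infty$ --- by the convention $f(\Delta)=0$ in the killing case, and by the mechanism of Lemma~\ref{lem:nonemptyU} and Lemma~\ref{lem:uint} at the point at infinity --- and the absorption theorem \citet[Theorem II.78.1]{rogers} is stated for the stopping time $\inf\{t\,|\,M_t=0 \textrm{ or } M_{t-}=0\}$, so hitting zero by value is just as final as hitting it by left limit: $M_t^{T,u}=0$ and hence $\|\widetilde{X}_t\|=\infty$ for all $t\geq s$. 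In other words, the martingale proof of Theorem~\ref{th:Xcadlaginfinity} actually establishes the stronger implication with hypothesis ``$\|X_{s-}\|=\infty$ or $\|X_s\|=\infty$'', and it is this stronger form, not the quoted one, that the corollary combines with Theorem~\ref{th:cadlagversion} and Assumption~\eqref{eq:deltaabsorbing}.
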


\begin{remark}\label{rem:Omegacadlag}
We will henceforth \emph{always} assume that we are using the c\`adl\`ag version
of an affine process, given in Corollary~\ref{col:cadlag}, which we still denote by $X$.
Under this assumption $X$ can now be realized on the space $\Omega=\mathbb{D}(D_{\Delta})$ of c\`adl\`ag paths 
$\omega:\re_+\to D_{\Delta}$ with $\omega(t)=\Delta$ for $t\geq s$, whenever $\|\omega(s-)\|=\infty$ or $\|\omega(s)\|=\infty$. 
The canonical realization of an affine process $X$ is then defined by $X_t(\omega)=\omega(t)$.
Moreover, we make the convention that $X_{\infty}=\Delta$, which allows us to write certain formulas without restriction.
\end{remark}

\section{Right-Continuity of the Filtration and Strong Markov Property}\label{sec:filt}

Using the existence of a right-continuous version of an affine process, we can now show that $(\mathcal{F}_t^x)$, that is, 
the augmentation of $(\mathcal{F}_t^0)$ with respect to $\mathbb{P}_x$, is right-continuous.

\begin{theorem}\label{th:rightcontfilt}
Let $x \in D$ be fixed and let $X$ be an affine process relative to $(\mathcal{F}_t^x)$ with c\`adl\`ag paths. 
Then $(\mathcal{F}_t^x)$ is right-continuous.
\end{theorem}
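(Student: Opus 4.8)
The plan is to show $\mathcal{F}^x_{t+}=\mathcal{F}^x_t$ for every $t\geq 0$ by establishing that the simple Markov property already holds with respect to the a priori larger $\sigma$-algebra $\mathcal{F}^x_{t+}$, when tested against the exponential functionals that generate $\mathcal{F}^x$. Since $(\mathcal{F}^x_t)$ is augmented, it suffices to prove that $\mathbb{E}_x[Z\mid\mathcal{F}^x_{t+}]$ is $\mathcal{F}^x_t$-measurable for every bounded $\mathcal{F}^x$-measurable $Z$; by the functional (multiplicative system) monotone class theorem this reduces to the generating family $Z=g\prod_{j=1}^m e^{\langle u_j,X_{s_j}\rangle}$ with $t<s_1<\dots<s_m$, $u_j\in\im V$, and $g$ bounded and $\mathcal{F}^x_t$-measurable. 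As $g$ is $\mathcal{F}^x_{t}\subseteq\mathcal{F}^x_{t+}$-measurable it factors out of both conditional expectations, so everything follows once we establish
\[
\mathbb{E}_x\Big[\textstyle\prod_{j=1}^m e^{\langle u_j,X_{s_j}\rangle}\,\Big|\,\mathcal{F}^x_{t+}\Big]
=\mathbb{E}_{X_t}\Big[\textstyle\prod_{j=1}^m e^{\langle u_j,X_{s_j-t}\rangle}\Big],
\]
because the right-hand side is $\mathcal{F}^x_t$-measurable and, by the ordinary Markov property relative to $(\mathcal{F}^x_t)$, equals $\mathbb{E}_x[\,\cdot\mid\mathcal{F}^x_t]$.

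To prove this identity I would choose a strictly decreasing sequence $r_n\downarrow t$ with $t<r_n<s_1$, so that $\bigcap_n\mathcal{F}^x_{r_n}=\mathcal{F}^x_{t+}$; the reverse martingale convergence theorem then gives $\mathbb{E}_x[\prod_j e^{\langle u_j,X_{s_j}\rangle}\mid\mathcal{F}^x_{t+}]=\lim_{n}\mathbb{E}_x[\prod_j e^{\langle u_j,X_{s_j}\rangle}\mid\mathcal{F}^x_{r_n}]$ $\mathbb{P}_x$-a.s. By the Markov property relative to $(\mathcal{F}^x_t)$ the $n$-th conditional expectation equals $H_{r_n}(X_{r_n})$, where $H_r(y):=\mathbb{E}_y[\prod_j e^{\langle u_j,X_{s_j-r}\rangle}]$. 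Conditioning successively on $\mathcal{F}^x_{s_{m-1}},\dots,\mathcal{F}^x_{s_1}$ and using~\eqref{eq:affineprocess} shows that $H_r$ is \emph{exponential affine} in $y$: each step replaces the last exponent by its image under $\psi$ and produces a scalar $\Phi$-factor, the accumulated arguments staying in $\mathcal{U}$ because $\im V+\mathcal{U}\subseteq\mathcal{U}$ (the real part is unchanged) and because $\psi$ maps into $\mathcal{U}$ by Proposition~\ref{prop:PhiPsiproperties}\ref{item:rangePsi}, while a step in which some $\Phi$ vanishes simply collapses $H_r$ to $0$. Hence $H_r(y)=C(r)e^{\langle w(r),y\rangle}$ with $C$ and $w$ built from $\Phi$ and $\psi$, which are jointly continuous on $\mathcal{Q}^m$ by Proposition~\ref{prop:PhiPsiproperties}\ref{item:jointcont}. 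Since $X$ is càdlàg (Corollary~\ref{col:cadlag}) we have $X_{r_n}\to X_t$ $\mathbb{P}_x$-a.s., and combining this with the joint continuity of $(r,y)\mapsto H_r(y)$ yields $H_{r_n}(X_{r_n})\to H_t(X_t)=\mathbb{E}_{X_t}[\prod_j e^{\langle u_j,X_{s_j-t}\rangle}]$, which is the claimed identity.

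The delicate point, and where the general theory fails, is the passage $H_{r_n}(X_{r_n})\to H_t(X_t)$: for a generic Markov process this requires continuity of $y\mapsto\mathbb{E}_y[f(X_s)]$ in the initial state, i.e.\ a Feller-type property, which is \emph{not} available here. The main idea is that the exponential-affine form of the finite-dimensional distributions supplies exactly this continuity for free, the map $y\mapsto e^{\langle w(r),y\rangle}$ being manifestly continuous; this is what replaces the Feller property. The remaining technicalities are routine: the modulus $|e^{\langle w(r),X_{r}\rangle}|=e^{\langle\Re w(r),X_{r}\rangle}$ stays bounded for $r$ near $t$ since $\Re w(r)\in\Re\,\mathcal{U}$ and, by continuity, $w(r)$ remains in a fixed $\mathcal{U}^M$; the vanishing of $\Phi$ only collapses terms to $0$; and the conventions from Corollary~\ref{col:cadlag} ($\Delta$ absorbing, $e^{\langle u,\Delta\rangle}=0$) ensure correct behaviour when a path reaches the cemetery. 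Finally, applying the resulting equality $\mathbb{E}_x[Z\mid\mathcal{F}^x_{t+}]=\mathbb{E}_x[Z\mid\mathcal{F}^x_t]$ (now valid for all bounded $\mathcal{F}^x$-measurable $Z$ by the monotone class step) to $Z=\mathbf{1}_A$ with $A\in\mathcal{F}^x_{t+}$ shows that $\mathbf{1}_A$ is $\mathcal{F}^x_t$-measurable, whence $\mathcal{F}^x_{t+}\subseteq\mathcal{F}^x_t$ and $(\mathcal{F}^x_t)$ is right-continuous.
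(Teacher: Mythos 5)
Your proposal is correct and follows essentially the same route as the paper's proof (an adaptation of Protter's Theorem I.4.31): reduce via a monotone class argument to exponential functionals, identify $\mathbb{E}_x[\,\cdot\mid\mathcal{F}^x_{t+}]$ as the limit of $\mathbb{E}_x[\,\cdot\mid\mathcal{F}^x_{s}]$ as $s\downarrow t$, compute these conditional expectations explicitly through iterated use of the affine property (with the exponential-affine form, i.e.\ continuity of $\Phi$, $\psi$ and of $y\mapsto e^{\langle w,y\rangle}$, standing in for the missing Feller property), pass to the limit using right-continuity of the paths, and finish with the augmentation of $\mathcal{F}^x_t$. The only differences are organizational (you factor the pre-$t$ part into $g$ and package the computation as $H_r(y)$, whereas the paper treats the cases $t_i\le t$ and $t_i>t$ and works out $k=2$ explicitly), and your handling of vanishing $\Phi$ is at the same level of detail as the paper's.
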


\begin{proof}
We adapt the proof of~\citet[Theorem I.4.31]{protter} to our setting. 
We have to show that for every $t \geq 0$, $\mathcal{F}^x_{t+}=\mathcal{F}_t^x$, where $\mathcal{F}^x_{t+}=\bigcap_{s >t}\mathcal{F}_t^x$. 
Since the filtration is increasing, it suffices to show that $\mathcal{F}^x_{t}=\bigcap_{n \geq 1}\mathcal{F}_{t+\frac{1}{n}}^x$.
In particular, we only need to prove that
\begin{align}\label{eq:rightcont}
\mathbb{E}_x\left[e^{\langle u_1, X_{t_1}\rangle +\cdots +\langle u_k, X_{t_k}\rangle}\,\Big |\, \mathcal{F}_t^x\right]=
\mathbb{E}_x\left[e^{\langle u_1, X_{t_1}\rangle  +\cdots +\langle u_k, X_{t_n}\rangle}\,\Big |\, \mathcal{F}_{t+}^x\right]
\end{align}
for all $(t_1, \ldots, t_k)$ and all $(u_1, \ldots, u_k)$ with $t_i \in \re_+$ and $u_i \in \mathcal{U}$, since this implies
$\mathbb{E}_x[Z |\mathcal{F}_t^x]=\mathbb{E}_x[Z | \mathcal{F}_{t+}^x]$ for every bounded $Z \in \mathcal{F}^x$. 
As both $\mathcal{F}_{t+}^x$ and $\mathcal{F}_{t}^x$ contain the nullsets $\mathcal{N}(\mathcal{F}^x)$, this then already yields
$\mathcal{F}_{t+}^x =\mathcal{F}_{t}^x$ for all $t \geq 0$.

In order to prove~\eqref{eq:rightcont}, let $t\geq 0$ be fixed and take first $t_1 \leq t_2 \cdots \leq t_k \leq t$.  
Then we have for all $(u_1,\ldots, u_k)$
\begin{align*}
\mathbb{E}_x\left[e^{\langle u_1, X_{t_1}\rangle +\cdots +\langle u_k, X_{t_k}\rangle}\,\Big |\, \mathcal{F}_t^x\right]&=
\mathbb{E}_x\left[e^{\langle u_1, X_{t_1}\rangle +\cdots +\langle u_k, X_{t_k}\rangle}\,\Big |\, \mathcal{F}_{t+}^x\right]\\
&=e^{\langle u_1, X_{t_1}\rangle +\cdots +\langle u_k, X_{t_k}\rangle}.
\end{align*}
In the case $t_k> t_{k-1} \cdots > t_1 > t$, we give the proof for $k=2$ for notational convenience. 
Let $t_2>t_1>t$ and fix $u_1, u_2 \in \mathcal{U}$. Then we have by the affine property
\begin{align*}
\mathbb{E}_x\left[ e^{\langle u_1, X_{t_1}\rangle +\langle u_2, X_{t_2}\rangle}\,\Big |\, \mathcal{F}_{t+}^x\right]&=\lim_{s\downarrow t}\mathbb{E}_x\left[ e^{\langle u_1, X_{t_1}\rangle +\langle u_2, X_{t_2}\rangle}\,\Big |\, \mathcal{F}_s^x\right]\\
&=\lim_{s\downarrow t}\mathbb{E}_x\left[ \mathbb{E}_x\left[e^{\langle u_1, X_{t_1}\rangle +\langle u_2, X_{t_2}\rangle}\,\Big |\, \mathcal{F}_{t_1}^x\right]\,\Big |\, \mathcal{F}_s^x\right]\\
&=\Phi(t_2-t_1,u_2)\lim_{s\downarrow t}\mathbb{E}_x\left[ e^{\langle u_1+\psi(t_2-t_1,u_2), X_{t_1}\rangle} \,\Big |\, \mathcal{F}_s^x\right].
\end{align*}
If $\Phi(t_2-t_1,u_2)=0$, it follows by the same step that 
\[
\mathbb{E}_x\left[ e^{\langle u_1, X_{t_1}\rangle +\langle u_2, X_{t_2}\rangle}\,\Big |\, \mathcal{F}_{t}^x\right]=0,
\] 
too. Otherwise, we have by Proposition~\ref{prop:PhiPsiproperties}~\ref{item:rangePsi}, $\psi(t_2-t_1,u_2) \in \mathcal{U}$, 
and by the definition of $\mathcal{U}$ also $u_1+\psi(t_2-t_1,u_2) \in \mathcal{U}$. 
Hence, again by the affine property and right-continuity of $t \mapsto X_t(\omega)$, the above becomes 
\begin{align*}
&\mathbb{E}_x\left[ e^{\langle u_1, X_{t_1}\rangle +\langle u_2, X_{t_2}\rangle}\,\Big |\, \mathcal{F}_{t+}^x\right]\\
&\quad=\Phi(t_2-t_1,u_2)\lim_{s\downarrow t}\Phi(t_1-s,u_1+\psi(t_2-t_1,u_2))e^{\langle \psi(t_1-s,u_1+\psi(t_2-t_1,u_2)), X_{s}\rangle}\\
&\quad=\Phi(t_2-t_1,u_2)\Phi(t_1-t,u_1+\psi(t_2-t_1,u_2))e^{\langle \psi(t_1-t,u_1+\psi(t_2-t_1,u_2)), X_{t}\rangle}\\
&\quad=\mathbb{E}_x\left[ e^{\langle u_1, X_{t_1}\rangle +\langle u_2, X_{t_2}\rangle} \,\Big |\, \mathcal{F}_{t}^x\right].
\end{align*}
This yields~\eqref{eq:rightcont} and by the above arguments we conclude that 
$\mathcal{F}_{t+}^x =\mathcal{F}_{t}^x$ for all $t \geq 0$.
\end{proof}

\begin{remark}\label{rem:setting}
A consequence of Theorem~\ref{th:rightcontfilt} is that $(\Omega, \mathcal{F}_t, (\mathcal{F}_t^x), \mathbb{P}_x)$ 
satisfies the \emph{usual conditions}, since
\begin{enumerate}
\item $\mathcal{F}^x$ is $\mathbb{P}_x$-complete, 
\item $\mathcal{F}_0^x$ contains all $\mathbb{P}_x$-null-sets in $\mathcal{F}^x$, 
\item $(\mathcal{F}^x_t)$ is right-continuous.
\end{enumerate}

Let us now set
\begin{align}\label{eq:standardfilt}
\mathcal{F}:=\bigcap_{x \in D_{\Delta}}\mathcal{F}^x, \quad \mathcal{F}_t:=\bigcap_{x \in D_{\Delta}}\mathcal{F}_t^x.
\end{align}
Then $(\Omega, \mathcal{F}, (\mathcal{F}_t), \mathbb{P}_x)$ does not necessarily satisfy the usual conditions, 
but $\mathcal{F}_t=\mathcal{F}_{t+}$ still holds true.
Moreover, it follows e.g.~from~\citet[Proposition III.2.12, III.2.14]{revuzyor} that, for each $t$, $X_t$ is $\mathcal{F}_t$-measurable
and a Markov process with respect to $(\mathcal{F}_t)$. 

Unless otherwise mentioned, we henceforth \emph{always} consider affine processes 
on the filtered space $(\Omega, \mathcal{F}, (\mathcal{F}_t))$, where  $\Omega=\mathbb{D}(D_{\Delta})$, 
as described in Remark~\ref{rem:Omegacadlag}, and $\mathcal F$, $\mathcal{F}_t$ are given by~\eqref{eq:standardfilt}. 
Notice that these assumptions on the probability space correspond to the standard setting considered for Feller
processes (compare~\citet[Definition III.7.16, III.9.2]{rogers}).
\end{remark}

Similar as in the case of Feller processes, we can now formulate and prove the strong Markov property 
for affine processes using the above setting and in particular the right-continuity of the sample paths.

\begin{theorem}
Let $X$ be an affine process and let $T$ be a $(\mathcal{F}_t)$-stopping time. 
Then for each bounded Borel measurable function $f$ and $s \geq 0$
\[
 \mathbb{E}_x\left[f(X_{T+s})|\mathcal{F}_T\right]= \mathbb{E}_{X_T}\left[f(X_s)\right], \quad \mathbb{P}_x \textrm{-a.s.}
\]
\end{theorem}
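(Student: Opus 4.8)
The plan is to adapt the classical strong Markov argument for Feller processes (see~\citet{rogers}) to the present setting, replacing the Feller property -- which is not available here -- by the affine structure together with the path and filtration regularity established above. The structural inputs are: the c\`adl\`ag property and the absorption of the process at the cemetery from Corollary~\ref{col:cadlag}, the right-continuity of $(\mathcal{F}_t)$ (Remark~\ref{rem:setting}, based on Theorem~\ref{th:rightcontfilt}), the simple Markov property relative to $(\mathcal{F}_t)$ (Remark~\ref{rem:setting}), and the explicit continuity on $D$ of the map $x \mapsto P_s e^{\langle u, x\rangle}=\Phi(s,u)e^{\langle \psi(s,u),x\rangle}$ provided by the affine form~\eqref{eq:affineprocess}. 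Since $P_s f(X_T)=\mathbb{E}_{X_T}[f(X_s)]$ by definition of the semigroup, the claim is equivalent to $\mathbb{E}_x[f(X_{T+s})\mid \mathcal{F}_T]=P_s f(X_T)$, $\mathbb{P}_x$-a.s.

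By the functional monotone class theorem it suffices to establish this identity for the multiplicative generating class $f=e^{\langle u,\cdot\rangle}$, $u\in\im V$: these functions are bounded, continuous on $D$, closed under multiplication, and generate $\mathcal{D}_\Delta$, and both sides of the identity are linear in $f$ and stable under bounded pointwise convergence, so the extension to all bounded Borel $f$ is automatic. For such $f$ I would then use the dyadic approximation $T_n=\lceil 2^nT\rceil\,2^{-n}$, a decreasing sequence of discrete $(\mathcal{F}_t)$-stopping times with $T_n\downarrow T$. For each fixed $n$, conditioning on the countably many values of $T_n$ and invoking the simple Markov property yields the discrete strong Markov identity
\[
\mathbb{E}_x\left[f(X_{T_n+s})\mid \mathcal{F}_{T_n}\right]=P_s f(X_{T_n}),
\]
after which I pass to the limit $n\to\infty$ on both sides.

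For the right-hand side, right-continuity of the paths gives $X_{T_n}\to X_T$ in $D_\Delta$; on $\{X_T\in D\}$ the continuity of $P_s e^{\langle u,\cdot\rangle}$ on $D$ yields $P_sf(X_{T_n})\to P_sf(X_T)$, while on $\{X_T=\Delta\}$ the absorption property of Corollary~\ref{col:cadlag} forces $X_{T_n}=\Delta$ (as $T_n\ge T$), so both sides equal $P_sf(\Delta)=0$. For the left-hand side, $\mathcal{F}_{T_n}\downarrow \mathcal{F}_{T+}=\mathcal{F}_T$ by right-continuity of the filtration, and the same case distinction -- using absorption on $\{X_{T+s}=\Delta\}$, where $X_{T_n+s}=\Delta$ for every $n$ -- shows $f(X_{T_n+s})\to f(X_{T+s})$ boundedly a.s. Writing
\[
\mathbb{E}_x\left[f(X_{T_n+s})\mid\mathcal{F}_{T_n}\right]=\mathbb{E}_x\left[f(X_{T+s})\mid\mathcal{F}_{T_n}\right]+\mathbb{E}_x\left[f(X_{T_n+s})-f(X_{T+s})\mid\mathcal{F}_{T_n}\right],
\]
I would apply the reverse (downward) martingale convergence theorem to the first term and the $L^1$-contractivity of conditional expectation to the second, obtaining convergence to $\mathbb{E}_x[f(X_{T+s})\mid\mathcal{F}_T]$. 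Equating the two limits gives the identity for exponentials, and the monotone class step completes the proof.

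The main obstacle is precisely that, in the absence of the Feller property, $P_s f$ need not be continuous at the point at infinity $\infty\equiv\Delta$: for $u\in\im V$ the modulus of $e^{\langle u,\cdot\rangle}$ does not decay, so $P_s e^{\langle u,\cdot\rangle}$ does not vanish at infinity and the naive $C_0$-based limiting argument breaks down. The remedy is the decomposition into the events $\{X_T\in D\}$ and $\{X_T=\Delta\}$, exploiting only the \emph{interior} continuity furnished by the affine form on the former and the absorption property of Corollary~\ref{col:cadlag} on the latter; this same dichotomy is what makes the a.s.\ convergence of $f(X_{T_n+s})$ and $P_sf(X_{T_n})$ go through despite the discontinuity of the test functions at the cemetery.
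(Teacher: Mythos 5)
Your proposal is correct and follows essentially the same route as the paper's proof: dyadic approximation of $T$, the simple Markov property at the discrete stopping times, the family $\{x \mapsto e^{\langle u, x\rangle},\, u \in \im V\}$ in place of $C_0$-functions, continuity of $x \mapsto P_s e^{\langle u, x\rangle}$ coming from the affine form, right-continuity of paths, and a monotone class extension -- exactly the adaptation of \citet[Theorem 8.3]{rogers} that the paper sketches. Your only deviation is cosmetic: you pass to the limit via reverse martingale convergence of conditional expectations (using $\mathcal{F}_{T_n} \downarrow \mathcal{F}_{T+} = \mathcal{F}_T$), whereas the standard argument the paper invokes integrates against a fixed $\Lambda \in \mathcal{F}_T$ and uses dominated convergence; both are valid implementations of the same limiting step.
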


\begin{proof}
This result can be shown by the same arguments used to prove the strong Markov property of Feller processes
(see, e.g.,~\citet[Theorem 8.3, Theorem 9.4]{rogers}), namely by using a dyadic approximation of the stopping time $T$
and applying the Markov property. Instead of using $C_0$-functions and the Feller property, 
we here consider the family of functions $\{x \mapsto e^{\langle u, x \rangle}\,|\, u \in \im V\}$ 
and the affine property, which asserts in particular
that 
\[
 x \mapsto \mathbb{E}_x\left[e^{\langle u, X_t\rangle}\right]=P_te^{\langle u, x\rangle}=\Phi(t,u)e^{\langle \psi(t,u),x \rangle}
\]
is continuous. This together with the right-continuity of paths then implies for every $\Lambda \in \mathcal{F}_T$ and $u \in \im V$
\[
\mathbb{E}_x\left[e^{\langle u, X_{T+s}\rangle} 1_{\Lambda}\right]=\mathbb{E}_x\left[P_se^{\langle u, X_{T}\rangle} 1_{\Lambda}\right].
\]
The assertion then follows by the same arguments as in~\citet[Theorem 8.3]{rogers} or~\citet[Theorem 2.3.1]{chung}.
\end{proof}

\section{Semimartingale Property}\label{sec:semimart}

We shall now relate affine processes to semimartingales, where, for every $x \in D$, 
semimartingales are understood with respect to the 
filtered probability space $(\Omega, \mathcal{F}, (\mathcal{F}_t), \mathbb{P}_x)$ defined above. By convention, we call $X$ a semimartingale if 
$X1_{[0, T_{\Delta})}$ is a semimartingale, 
where -- as a consequence of Theorem~\ref{th:Xcadlaginfinity} and Corollary~\ref{col:cadlag} --
we can now define the \emph{lifetime} $T_{\Delta}$ by
\begin{align}\label{eq:lifetimeX}
T_{\Delta}(\omega)=\inf\{t>0\, |\,  X_t(\omega)=\Delta\}.
\end{align}

Let us start with the following definition for general Markov processes (compare~\citet[Definition 7.1]{cinlar}):

\begin{definition}[Extendend Generator]\label{def:extendedgen}
An operator $\mathcal{G}$ with domain $\mathcal{D}_{\mathcal{G}}$ is called \emph{extended generator} for a Markov process $X$ 
(relative to some filtration $(\mathcal{F}_t)$) if $D_{\mathcal{G}}$ consists of those Borel measurable functions $f: D \to \mathbb{C}$ for which there exists a function $\mathcal{G}f$ such that the process
\[
f(X_t)-f(x)-\int_0^t \mathcal{G}f(X_{s-})ds
\] 
is a well-defined and $(\mathcal{F}_t,\mathbb{P}_x)$-local martingale for every $x \in D_{\Delta}$.
\end{definition}

In the following lemma we consider a particular class of functions 
for which it is possible to state the form of the extended generator for a Markov process in terms of its semigroup.

\begin{lemma}\label{lem:gmart}
Let $X$ be a $D_{\Delta}$-valued Markov process relative to some filtration $(\mathcal{F}_t)$.
Suppose that $u \in \mathcal{U}$ 
and $\eta > 0$. Consider the function 
\[
g_{u,\eta}:D \to \cn, x \mapsto g_{u, \eta}(x):=\frac{1}{\eta}\int_0^{\eta}P_s e^{\langle u , x\rangle}ds.
\] 
Then, for every $x \in D$,
\begin{align*}
M_t^u:=g_{u,\eta}(X_t)-g_{u,\eta}(X_0)-\int_0^t \frac{1}{\eta}\left(P_{\eta}e^{\langle u, X_{s-}\rangle}-e^{\langle u,X_{s-}\rangle}\right) ds
\end{align*}
is a (complex-valued) $(\mathcal{F}_t,\mathbb{P}_x)$-martingale and thus $g_{u,\eta}(X)$ is a (complex-valued) special semimartingale.
\end{lemma}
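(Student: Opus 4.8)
The plan is to show that $M^u_t$ is a martingale by verifying the defining martingale property $\mathbb{E}_x[M^u_{t+s}\mid\mathcal{F}_t]=M^u_t$ directly, exploiting the Markov property together with the semigroup identity $P_sP_r=P_{s+r}$. The key structural fact to use is that $g_{u,\eta}$ is an average of the semigroup applied to the exponential, so its increments along the process telescope against the integral term.

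\textbf{Main computation.} First I would compute $P_t g_{u,\eta}(x)$ using Fubini and the semigroup property:
\begin{align*}
P_t g_{u,\eta}(x)=\frac{1}{\eta}\int_0^{\eta}P_tP_s e^{\langle u,x\rangle}\,ds=\frac{1}{\eta}\int_0^{\eta}P_{t+s}e^{\langle u,x\rangle}\,ds=\frac{1}{\eta}\int_t^{t+\eta}P_s e^{\langle u,x\rangle}\,ds.
\end{align*}
This shows the map $t\mapsto P_t g_{u,\eta}(x)$ is differentiable in $t$ with
\begin{align*}
\frac{d}{dt}P_t g_{u,\eta}(x)=\frac{1}{\eta}\bigl(P_{t+\eta}e^{\langle u,x\rangle}-P_t e^{\langle u,x\rangle}\bigr)=P_t\Bigl(\tfrac{1}{\eta}\bigl(P_{\eta}e^{\langle u,\cdot\rangle}-e^{\langle u,\cdot\rangle}\bigr)\Bigr)(x),
\end{align*}
where the last equality again uses the semigroup identity. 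Writing $Gg_{u,\eta}(x):=\frac{1}{\eta}(P_{\eta}e^{\langle u,x\rangle}-e^{\langle u,x\rangle})$, this is precisely the integrated Kolmogorov relation $P_tg_{u,\eta}=g_{u,\eta}+\int_0^t P_sGg_{u,\eta}\,ds$. The plan is then to conditionally center: for $t,s\geq 0$, I would apply the Markov property~\eqref{eq:Markovprop} to obtain
\begin{align*}
\mathbb{E}_x\bigl[g_{u,\eta}(X_{t+s})\mid\mathcal{F}_t\bigr]=P_s g_{u,\eta}(X_t)=g_{u,\eta}(X_t)+\int_0^s P_r Gg_{u,\eta}(X_t)\,dr,
\end{align*}
and similarly handle the conditional expectation of the integral term using the Markov property and Fubini, noting that since $X$ has càdlàg paths the integral $\int_t^{t+s}Gg_{u,\eta}(X_{r-})\,dr$ may be replaced by $\int_t^{t+s}Gg_{u,\eta}(X_r)\,dr$ up to a Lebesgue-null set of jump times. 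Combining these two gives $\mathbb{E}_x[M^u_{t+s}\mid\mathcal{F}_t]=M^u_t$.

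\textbf{Integrability and the semimartingale conclusion.} Throughout I must check the integrability needed to justify the martingale property and the use of Fubini. Since $u\in\mathcal{U}$, the function $x\mapsto e^{\langle u,x\rangle}$ is bounded on $D$, hence so is $P_s e^{\langle u,\cdot\rangle}$ (the kernels are substochastic), and therefore both $g_{u,\eta}$ and $Gg_{u,\eta}$ are bounded functions on $D$; this makes $M^u_t$ integrable and all interchanges of expectation and integration legitimate. Once $M^u$ is established as a martingale, the identity
\begin{align*}
g_{u,\eta}(X_t)=g_{u,\eta}(X_0)+\int_0^t Gg_{u,\eta}(X_{s-})\,ds+M^u_t
\end{align*}
exhibits $g_{u,\eta}(X)$ as the sum of a finite-variation càdlàg adapted process (the bounded integral term) and a martingale, which is exactly the canonical decomposition of a special semimartingale. \textbf{The main obstacle} I anticipate is the careful bookkeeping of the conditional expectation of the integral term: one must split $\int_0^{t+s}=\int_0^t+\int_t^{t+s}$, pull the $\mathcal{F}_t$-measurable first piece out, and show that the conditional expectation of the second piece equals $\int_0^s P_r Gg_{u,\eta}(X_t)\,dr$ via the Markov property and a Fubini argument — so that it cancels precisely against the increment $P_sg_{u,\eta}(X_t)-g_{u,\eta}(X_t)$ coming from the first term. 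The only subtlety, easily dispatched by the boundedness above and path right-continuity, is the interchange of $\mathbb{E}_x[\,\cdot\mid\mathcal{F}_t]$ with the time integral.
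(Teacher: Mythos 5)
Your proposal is correct and takes essentially the same route as the paper: both verify the martingale property directly via the Markov property, Fubini, and the semigroup identity, with the increment cancelling against shifted integrals of the form $\frac{1}{\eta}\int_t^{t+\eta}P_s e^{\langle u,x\rangle}\,ds$, and both deduce the special semimartingale property from the decomposition into a martingale plus a bounded predictable finite-variation part. The only cosmetic difference is that the paper carries out the cancellation as four explicit integrals after conditioning $M^u_t$ on $\mathcal{F}_r$, whereas you package it as an integrated Kolmogorov relation; note that your differentiability remark is not needed for this (and would require continuity of $s\mapsto P_s e^{\langle u,x\rangle}$, not available for a general Markov process), since the integrated identity follows by simply rearranging integrals.
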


\begin{proof}
Since $g_{u,\eta}$ and $P_{\eta}e^{\langle u, \cdot\rangle}-e^{\langle u,\cdot\rangle}$ are bounded, $M_t^u$ is integrable for each $t$ and we have
\begin{align*}
&\mathbb{E}_x\left[M_t^u |\mathcal{F}_r \right]\\
&\quad=M_r^u+\mathbb{E}_x\left[g_{u,\eta}(X_t)-g_{u,\eta}(X_r)-\int_r^t \frac{1}{\eta}\left(P_{\eta}e^{\langle u, X_{s-}\rangle}-e^{\langle u,X_{s-}\rangle}\right) ds\, \Big|\,\mathcal{F}_r \right]\\
&\quad=M_r^u+\mathbb{E}_{X_r}\left[g_{u,\eta}(X_{t-r})-g_{u,\eta}(X_0)-\int_0^{t-r} \frac{1}{\eta}\left(P_{\eta}e^{\langle u, X_{s-}\rangle}-e^{\langle u,X_{s-}\rangle}\right) ds \right]\\
&\quad=M_r^u+\frac{1}{\eta}\int_{t-r}^{t-r+\eta} P_se^{\langle u,X_{r}\rangle}ds-\frac{1}{\eta}\int_{0}^{\eta} P_se^{\langle u,X_{r}\rangle}ds\\
&\quad\quad -\frac{1}{\eta}\int_{\eta}^{t-r+\eta} P_se^{\langle u,X_{r}\rangle}ds+\frac{1}{\eta}\int_{0}^{t-r} P_se^{\langle u,X_{r}\rangle}ds\\
&\quad=M_r^u.
\end{align*}
Hence $M^u$ is $(\mathcal{F}_t,\mathbb{P}_x)$-martingale and thus $g_{u,\eta}(X)$ is a special semimartingale, since it is the sum of a martingale and a predictable finite variation process.
\end{proof}

\begin{remark}\label{rem:extendedgen}
Lemma~\ref{lem:gmart} asserts that the extended generator applied to $g_{u,\eta}$ is given by
$\mathcal{G}g_{u,\eta}(x)=\frac{1}{\eta}\left(P_{\eta}e^{\langle u, x\rangle}-e^{\langle u,x\rangle}\right)$.
Note that for general Markov processes and even for affine processes we do not know whether 
the ``pointwise'' infinitesimal generator applied to 
\[
 e^{\langle u,x \rangle}=\lim_{\eta \to 0}g_{u,\eta}=\lim_{\eta \to 0}\frac{1}{\eta}\int_0^{\eta}P_s e^{\langle u , x\rangle}ds,
\]
that is, 
\[
 \lim_{\eta \to 0} \frac{1}{\eta}\left(P_{\eta}e^{\langle u, x\rangle}-e^{\langle u,x\rangle}\right),
\]
is well-defined or not.\footnote{In the case of affine processes, this would be implied by the differentiability of
$\Phi$ and $\psi$ with respect to $t$, which we only prove in Section~\ref{sec:regularity} using the results of this paragraph.}
For this reason we consider the family of functions 
$\{ x \mapsto g_{u,\eta}(x) \, |\, u \in \mathcal{U},\, \eta > 0\}$, which exhibits in the case of affine processes similar properties as 
$\{ x \mapsto e^{\langle u, x\rangle} \, |\, u \in \mathcal{U}\}$ (see Remark~\ref{rem:fullcomplete}~(ii) and Lemma~\ref{lem:fullcomplete} below).
These properties are introduced in the following definitions (compare~\citet[Definition 7.7, 7.8]{cinlar}).
\end{remark}

\begin{definition}[Full Class]\label{def:full}
A class $\mathcal{C}$ of Borel measurable functions from $D$ to $\mathbb{C}$ is said to be a \emph{full} class if, for all $r \in \mathbb{N}$,
there exists a finite family $\{f_1, \ldots, f_N\} \in \mathcal{C}$ and a function $h \in C^2(\mathbb{C}^N,D)$ such that
\begin{align}\label{eq:fcth}
x=h(f_1(x), \ldots, f_N(x))
\end{align}
for all $x \in D$ with $\| x\| \leq r$.
\end{definition}

\begin{definition}[Complete Class]\label{def:completeclass}\label{def:complete}
Let $\beta \in V$, $\gamma \in S_+(V)$, where $S_+(V)$ denotes the positive semidefinite matrices over $V$, 
and let $F$ be a nonnegative measure on $V$, which integrates $(\|\xi\|^2 \wedge 1)$, satisfies $F(\{0\})=0$
and $x + \supp(F) \subseteq D_{\Delta}$ for all $x \in D$. Moreover, let $\chi: V \to V$ denote some truncation function, 
that is, $\chi$ is bounded and satisfies $\chi(\xi)=\xi$ in a neighborhood of $0$. 
A countable subset of functions $\mathcal{\widetilde{C}} \subset C_b^2(D)$ is called \emph{complete} if, for any fixed $x \in D$, the countable collection of numbers
\begin{multline}\label{eq:completeclass}
\kappa(f(x))=\langle \beta,\nabla f(x) \rangle + \frac{1}{2}\sum_{i,j} \gamma_{ij} D_{ij}f(x)\\ 
+ \int_V \left(f(x+\xi)-f(x)-\langle \nabla f(x),\chi(\xi) \rangle\right)F(d\xi), \quad f \in \mathcal{\widetilde{C}}
\end{multline}
completely determines $\beta$, $\gamma$ and $F$.
A class $\mathcal{C}$ of Borel measurable functions from $D$ to $\mathbb{C}$ is said to be \emph{complete class} if it contains such a countable set. 
\end{definition}

\begin{remark}\label{rem:fullcomplete}
\begin{enumerate}
\item Note that the integral in~\eqref{eq:completeclass} is well-defined for all $f \in C_b^2(D)$. This is a consequence of the integrability assumption and the fact that $x + \supp(F)$ is supposed to lie in $D_{\Delta}$ for all $x$.
\item The class of functions
\begin{align}\label{eq:CiV}
\mathcal{C}^{\ast}:=\left\{ D \to \mathbb{C},\, x \mapsto e^{\langle u, x\rangle}\, \big|\, u \in \im V \right\} 
\end{align}
is a full and complete class. Indeed, for every $x \in D$ with $\| x\| \leq r$, we can find $n$ linearly independent vectors $(u_1, \ldots, u_n)$ such that
\[
\Im\langle u_i, x\rangle \in \left[-\frac{\pi}{2},\frac{\pi}{2}\right].
\]
This implies that $x$ is given by
\[
x=\left(\arcsin \left(\Im e^{\langle u_1,x\rangle}\right), \ldots, \arcsin \left(\Im e^{\langle u_n,x\rangle}\right)\right)(\Im u_1, \ldots, \Im u_n)^{-1}
\]
and proves that $\mathcal{C^{\ast}}$ is a full class. Completeness follows by the same arguments as in~\citet[Lemma II.2.44]{jacod}.
\end{enumerate}
\end{remark}

\begin{lemma}\label{lem:fullcomplete}
Let $X$ be an affine process with $\Phi$ and $\psi$ given in~\eqref{eq:affineprocess}.
Consider the class of functions 
\begin{align}\label{eq:mathcalC}
\mathcal{C}:=\left\{ D \to \mathbb{C},\,  x \mapsto g_{u, \eta}(x):=\frac{1}{\eta}\int_0^{\eta}\Phi(s,u)e^{\langle \psi(s,u) , x\rangle}ds \, \big|\, u \in \im V, \, \eta >0 \right\}. 
\end{align}
Then $\mathcal{C}$ is a full and complete class.
\end{lemma}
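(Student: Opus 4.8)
The plan is to transfer the full\-- and completeness of the elementary class $\mathcal{C}^{\ast}$ of~\eqref{eq:CiV}, already established in Remark~\ref{rem:fullcomplete}~(ii), to $\mathcal{C}$ by exploiting that each $g_{u,\eta}$ is a $C^2$\--small perturbation of $x\mapsto e^{\langle u,x\rangle}$ when $\eta$ is small. The starting observation is that, by Proposition~\ref{prop:PhiPsiproperties} (i.e.\ $\Phi(0,u)=1$, $\psi(0,u)=u$ and joint continuity) together with the fundamental theorem of calculus, $\lim_{\eta\downarrow0}g_{u,\eta}(x)=\Phi(0,u)e^{\langle\psi(0,u),x\rangle}=e^{\langle u,x\rangle}$, and the same holds for the $x$\--derivatives: differentiating under the integral gives $\nabla_x g_{u,\eta}(x)=\frac1\eta\int_0^\eta \Phi(s,u)\psi(s,u)e^{\langle\psi(s,u),x\rangle}\,ds\to u\,e^{\langle u,x\rangle}$ and analogously for second derivatives, all limits being uniform on $\{x\in D:\|x\|\le r\}$. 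First I would record that for $u\in\im V$ one has $|g_{u,\eta}(x)|=\bigl|\tfrac1\eta\int_0^\eta P_se^{\langle u,x\rangle}\,ds\bigr|\le 1$, and that for $\eta$ small enough so that $\Phi(s,u)\neq0$ on $[0,\eta]$ (possible since $\Phi(0,u)=1$ and $\Phi$ is continuous) the bound $|e^{\langle\Re\psi(s,u),x\rangle}|\le 1/\inf_{s\in[0,\eta]}|\Phi(s,u)|$ combined with the continuity of $s\mapsto\psi(s,u)$ on the compact interval $[0,\eta]$ yields $g_{u,\eta}\in C_b^2(D)$.

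For fullness, fix $r\in\mathbb{N}$ and choose, as in Remark~\ref{rem:fullcomplete}~(ii), linearly independent $u_1,\dots,u_n\in\im V$ with $\Im\langle u_i,x\rangle\in[-\tfrac\pi2,\tfrac\pi2]$ for all $\|x\|\le r$, so that $G_0:=(e^{\langle u_1,\cdot\rangle},\dots,e^{\langle u_n,\cdot\rangle})$ is an injective $C^2$ immersion on $\{\|x\|\le r\}$ with the explicit $C^2$ left inverse $h_0$ of that remark (injectivity of the differential is automatic since $\langle\Im u_i,v\rangle e^{\langle u_i,x\rangle}=0$ for all $i$ forces $v=0$). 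By the convergence above, $G_\eta:=(g_{u_1,\eta},\dots,g_{u_n,\eta})\to G_0$ in $C^2$ on the compact ball, so for $\eta$ small the differential of $G_\eta$ stays of full rank and $G_\eta$ stays injective on $\{\|x\|\le r\}$. An inverse function theorem argument---extending in a $C^2$ fashion the inverse of $G_\eta$ from the compact embedded image $G_\eta(\{\|x\|\le r\})$ to all of $\mathbb{C}^n$ via a tubular neighbourhood---then produces $h\in C^2(\mathbb{C}^n,D)$ with $x=h(g_{u_1,\eta}(x),\dots,g_{u_n,\eta}(x))$ for $\|x\|\le r$, proving that $\mathcal{C}$ is full.

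For completeness I would use that the operator $\kappa$ in~\eqref{eq:completeclass} is linear in $f$ and commutes with the bounded integral defining $g_{u,\eta}$. Writing $\Theta(w):=\langle\beta,w\rangle+\tfrac12\langle w,\gamma w\rangle+\int_V\bigl(e^{\langle w,\xi\rangle}-1-\langle w,\chi(\xi)\rangle\bigr)F(d\xi)$ for the L\'evy--Khintchine symbol, a direct computation gives $\kappa(e^{\langle w,\cdot\rangle})(x)=e^{\langle w,x\rangle}\Theta(w)$ for $w\in\mathcal{U}$, whence
\[
\kappa(g_{u,\eta})(x)=\frac1\eta\int_0^\eta \Phi(s,u)\,e^{\langle\psi(s,u),x\rangle}\,\Theta(\psi(s,u))\,ds,
\]
the integral being well defined by Remark~\ref{rem:fullcomplete}~(i) since $\psi(s,u)\in\mathcal{U}$. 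Letting $\eta\downarrow0$ and dividing by the known nonzero scalar $g_{u,\eta}(x)\to e^{\langle u,x\rangle}$ recovers $\Theta(u)=\lim_{\eta\downarrow0}\kappa(g_{u,\eta})(x)/g_{u,\eta}(x)$ for every $u\in\im V$. Taking a countable dense set of such $u$ and an admissible null sequence of $\eta$'s gives a countable subfamily of $\mathcal{C}$ lying in $C_b^2(D)$ whose $\kappa$\--values determine $\Theta$ on the dense set, hence on all of $\im V$ by continuity of $\Theta$; uniqueness of the L\'evy--Khintchine representation (exactly as for $\mathcal{C}^{\ast}$, cf.~\citet[Lemma II.2.44]{jacod}) then shows that $\Theta|_{\im V}$ determines $\beta$, $\gamma$ and $F$, so $\mathcal{C}$ is complete.

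The main obstacle I expect is regularity bookkeeping rather than the structural idea: one must secure $g_{u,\eta}\in C_b^2(D)$ with uniform control of the derivatives, which forces the restriction to small $\eta$ staying clear of the zeros of $\Phi(\cdot,u)$, and one must carry out the fullness perturbation honestly, i.e.\ upgrade ``$G_\eta$ is $C^2$\--close to an injective immersion'' to a genuine $C^2$ map $h$ defined on all of $\mathbb{C}^n$.
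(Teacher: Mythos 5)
Your proposal is correct and is essentially the paper's own argument: both rest on the fact that $g_{u,\eta}\to e^{\langle u,\cdot\rangle}$, together with its first and second $x$-derivatives, uniformly on compacts as $\eta\downarrow 0$, so that fullness follows by perturbing the invertible exponential map $\left(e^{\langle u_1,\cdot\rangle},\dots,e^{\langle u_n,\cdot\rangle}\right)$ through an inverse-function-theorem argument, and completeness follows by computing $\kappa(g_{u,\eta})$ via Fubini and letting $\eta\downarrow 0$ to reduce to the uniqueness of the L\'evy--Khintchine representation, i.e.\ to the completeness of the class $\mathcal{C}^{\ast}$ of Remark~\ref{rem:fullcomplete}. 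The only deviations are in technical execution: the paper inverts the real map $\Im f_\eta$ and obtains the uniform radius $r$ from a quantitative inverse function theorem (Howard/Lang), where you invoke stability of injective immersions under $C^1$-perturbation plus a tubular-neighbourhood extension of the inverse, and you make explicit the countability bookkeeping (a dense set of $u$'s and a null sequence of $\eta$'s) that the paper's completeness step leaves implicit.
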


\begin{proof}
Let $(u_1,\ldots u_n) \in \im V$ be $n$ linearly independent vectors and define a function $f_{\eta}: D\rightarrow \mathbb{C}^n$ by $f_{\eta,i}(x)= g_{u_i,\eta}(x)$. Then the Jacobi matrix $J_{f_{\eta}}(x)$ is given by
\begin{align*}
\small
\left(\begin{array}{ccc}
\frac{1}{\eta} \int_0^{\eta} \Phi(s,u_1)e^{\langle \psi(s,u_1), x\rangle }\psi_1(s,u_1)ds & \ldots & \frac{1}{\eta} \int_0^{\eta} \Phi(s,u_1)e^{\langle \psi(s,u_1), x\rangle }\psi_n(s,u_1)ds\\
 \vdots &\ddots& \vdots\\
 \frac{1}{\eta} \int_0^{\eta} \Phi(s,u_n)e^{\langle \psi(s,u_n), x\rangle }\psi_1(s,u_n)ds & \ldots & \frac{1}{\eta} \int_0^{\eta} \Phi(s,u_n)e^{\langle \psi(s,u_n), x\rangle }\psi_n(s,u_n)ds
\end{array}\right).
\end{align*}
In particular, the imaginary part of each row tends to $(\cos(\Im\langle u_i, x\rangle)\Im u_i)^{\top}$ for $\eta \to 0$. Hence there exists some $\eta >0$ such that the rows of $\Im J_{f_{\eta}}$ are linearly independent. As $\Im f_{\eta}: D \to \re^n$ is a $C^{\infty}(D)$-function and as $J_{\Im f_{\eta}}=\Im J_{f_{\eta}}$, it follows from the inverse function theorem that, for each $x_0 \in D$, there exists some $r_0 >0$ such that $\Im f_{\eta}: B(x_0,r_0) \to W$ has a $C^{\infty}(W)$ inverse, where $W= 
\Im f_{\eta}( B(x_0,r_0))$.

Let now $r \in \mathbb{N}$ and consider $x \in D$ with $\| x \| \leq r$.  Assume without loss of generality that $0 \in D$ and let $x_0=0$. Since
\[
\lim_{\eta \to 0}J_{\Im f_{\eta}}(x)=(\cos(\Im\langle u_1, x\rangle)\Im u_1, \ldots, \cos(\Im \langle u_n, x\rangle)\Im u_n)^{\top},
\]
we can assure -- by choosing the linearly independent vectors $(u_1, \ldots, u_n)$ such that $|\langle u_i, x\rangle|$ is small enough -- that for all $x \in \overline{B}(0, r) \cap D$
\begin{multline*}
\|\lim_{\eta \to 0}J^{-1}_{\Im f_{\eta}}(0)\lim_{\eta \to 0}J_{\Im f_{\eta}}(x)-I\|\\
=\|(\Im u_1, \ldots, \Im u_n)^{-\top}(\cos(\Im \langle u_1, x\rangle)\Im u_1, \ldots, \cos(\Im \langle u_n, x\rangle)\Im u_n)^{\top}-I\|<1.
\end{multline*}
By the continuity of the matrix inverse the same holds true for $\eta$ small enough.
The proof of the inverse function theorem (see, e.g.,~\citet[Theorem 4.2]{howard} or~\citet[Lemma XIV.1.3]{lang1}) then implies that $r_0$ can be chosen to be $r$ and $\mathcal{C}$ is a full class.

Concerning completeness, note that
\begin{multline}\label{eq:Levygu}
 \kappa(g_{u,\eta}(x))= \frac{1}{\eta} \int_0^{\eta} \Phi(s,u)e^{\langle \psi(s,u), x\rangle }\Bigg(\langle \beta, \psi(s,u)\rangle +\frac{1}{2}\langle \psi(s,u) \gamma\psi(s,u)\rangle\\
+ \int_V \left(e^{\langle \psi(s,u), \xi\rangle }-1-\langle \psi(s,u),\chi(\xi) \rangle\right)F(d\xi)\Bigg)ds.
\end{multline} 
Indeed, by Remark~\ref{rem:fullcomplete} (i), the integral
\[
\int_V \int_0^{\eta} \left| \Phi(s,u)e^{\langle \psi(s,u), x\rangle }\left(e^{\langle \psi(s,u), \xi\rangle }-1-\langle \psi(s,u),\chi(\xi) \rangle\right)\right| ds \,F(d\xi)
\]
is well-defined, whence by Fubini's theorem we can interchange the integration.
From~\eqref{eq:Levygu} it thus follows that
\begin{multline}
\lim_{\eta \to 0} \kappa(g_{u,\eta}(x))=\kappa(e^{\langle u, x\rangle})\\=e^{\langle u, x\rangle }\Bigg(\langle \beta, u\rangle +\frac{1}{2}\langle u, \gamma u\rangle
+ \int_V \left(e^{\langle u, \xi\rangle }-1-\langle u,\chi(\xi) \rangle\right)F(d\xi)\Bigg).\label{eq:Levyu}
\end{multline}
Moreover, by~\citet[Lemma II.2.44]{jacod} or simply as a consequence of the completeness of the class ${C}^{\ast}$, 
as defined in~\eqref{eq:CiV}, the function $u \mapsto \kappa(e^{\langle u, x\rangle})$ admits a unique representation of form~\eqref{eq:Levyu}, that is, if $\kappa(e^{\langle \cdot, x\rangle})$ also satisfies~\eqref{eq:Levyu} with
$(\widetilde{\beta}, \widetilde{\gamma},\widetilde{F})$, then $\beta = \widetilde{\beta}, \,\gamma = \widetilde{\gamma}$ and $F = \widetilde{F}$. This property carries over to the class $\mathcal{C}$. Indeed, for every $x \in D$, there exists some $\eta >0$ such that $\beta = \widetilde{\beta}, \,\gamma = \widetilde{\gamma}$ and $F = \widetilde{F}$ if $u \mapsto \kappa(g_{u,\eta}(x))$ also satisfies~\eqref{eq:Levygu} with $(\widetilde{\beta}, \widetilde{\gamma}, \widetilde{F})$. This proves that $\mathcal{C}$ is a complete class.
\end{proof}

In order to establish the semimartingale property of $X$ and to study its characteristics, 
we need to handle explosions and killing. Similar to~\citet{filipoyor}, we consider again the stopping times $T_{\Delta}$ defined in~\eqref{eq:lifetimeX} and $T_k'$ given by
\[
T_k':=\inf\{t \, |\, \| X_{t-}\|\geq k \textrm{ or } \| X_{t}\|\geq k\}, \quad k \geq 1.
\]
By the convention $\|\Delta\| =\infty$, $T_k' \leq T_{\Delta}$ for all $k \geq 1$. 
As a transition to $\Delta$ occurs either by a jump or by explosion, we additionally define the stopping times:
\begin{equation}\label{eq:stoppingtimes}
\begin{split}
\Tjump&=\left\{
\begin{array}{rl}
T_{\Delta}, & \textrm{if } T_k'=T_{\Delta} \textrm{ for some } k,\\
\infty, & \textrm{if } T_k' < T_{\Delta} \textrm{ for all } k, 
\end{array}\right.\\
\Texpl&=\left\{
\begin{array}{rl}
T_{\Delta}, & \textrm{if } T_k' < T_{\Delta} \textrm{ for all } k,\\
\infty, &\textrm{if } T_k'=T_{\Delta} \textrm{ for some } k, 
\end{array}\right.\\
T_k&=\left\{
\begin{array}{rl}
T_k', & \textrm{if } T_k' < T_{\Delta},\\
\infty, &\textrm{if } T_k'=T_{\Delta}. 
\end{array}\right.
\end{split}
\end{equation}

Note that $\{\Tjump < \infty\} \cap \{\Texpl < \infty\} =\emptyset$ and 
$\lim_{k \to \infty} T_k=\Texpl$ with $T_k < \Texpl$ on $\{\Texpl <\infty\}$. 
Hence $\Texpl$ is predictable with announcing sequence $T_k \wedge k$.
In order to turn $X$ into a semimartingale and to get explicit expressions for the characteristics, we stop $X$ before it explodes,
which is possible, since $\Texpl$ is predictable.
Note that we cannot stop $X$ before it is killed, as $\Tjump$ is totally inaccessible.
For this reason we shall concentrate on the process $(X_t^{\tau}):=(X_{t \wedge {\tau}})$, 
where $\tau$ is a stopping time satisfying $0 <\tau < \Texpl$, which exists by the above argument and the c\`adl\`ag property of $X$.
Since $X=X^{T_{\Delta}}$, we have 
\begin{align*}
X_t^{\tau}&=X_t1_{\{t < (\tau \wedge T_{\Delta})\}}+X_{\tau \wedge T_{\Delta}}1_{\{t \geq (\tau \wedge T_{\Delta})\}}\\
&=X_t1_{\{t < (\tau \wedge \Tjump)\}}+X_{\tau \wedge \Tjump}1_{\{t \geq (\tau \wedge \Tjump)\}},
\end{align*}
which implies that a transition to $\Delta$ can only occur through a jump.

Recall that $\Delta$ is assumed to be an arbitrary point which does not lie in $D$. 
We can thus identify $\Delta$ with some point in $V \setminus D$ 
such that every $C^2_b(D)$-function $f$ can be extended continuously 
to $D_{\Delta}$ with $f(\Delta)=0$.
Indeed, without loss of generality we may assume that such a point exists, because otherwise we can always embed $D_{\Delta}$ in $V \times \re$.

\begin{theorem}\label{th:semimartingale}
Let $X$ be an affine process and let $\tau$ be a stopping time with $\tau < \Texpl$, where $\Texpl$ 
is defined in~\eqref{eq:stoppingtimes}.
Then $X1_{[0,T_{\Delta})}$ and $X^{\tau}$ are semimartingales with state space $D \cup \{0\}$ and $D_{\Delta}$, respectively. Moreover, let $(B,C, \nu)$ denote the characteristics of $X^{\tau}$ relative to some truncation function $\chi$. Then 
there exists a version of $(B,C, \nu)$, which is of the form
\begin{equation}\label{eq:characteristicsfinal}
\begin{split}
B_{t,i}&=\int_0^{t\wedge \tau} b_i(X_{s-})ds,\\
C_{t,ij}&=\int_0^{t\wedge \tau} c_{ij}(X_{s-})ds,\\
\nu(\omega; dt,d\xi)&= K(X_t,d\xi)1_{[0,\tau]}dt,
\end{split}
\end{equation}
where $b: D \to V$ and $c: D \to S_+(V)$ are measurable functions 
and $K(x,d\xi)$ is a positive kernel from $(D,\mathcal{D})$ into $(V,\mathcal{B}(V))$, 
which satisfies $\int_{V} (\|\xi\|^2 \wedge 1)K(x,d\xi)< \infty$, 
$K(x,\{0\})=0$ and $x + \supp(K(x,\cdot)) \subseteq D_{\Delta}$ for all $x \in D$.
\end{theorem}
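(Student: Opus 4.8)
The plan is to use the full class property to establish the semimartingale property, and the complete class property combined with the explicitly absolutely continuous compensator from Lemma~\ref{lem:gmart} to pin down the form of the characteristics. By Lemma~\ref{lem:gmart}, for every $u \in \im V$ and $\eta>0$ the bounded process $g_{u,\eta}(X)$ is a special semimartingale. By the full class property of Lemma~\ref{lem:fullcomplete}, for each $r\in\mathbb{N}$ there are a finite family $g_{u_1,\eta},\dots,g_{u_N,\eta}\in\mathcal{C}$ and $h\in C^2(\cn^N,D)$ with $x=h(g_{u_1,\eta}(x),\dots,g_{u_N,\eta}(x))$ whenever $\|x\|\le r$. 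Applying It\^o's formula to $h$ composed with the vector semimartingale $(g_{u_i,\eta}(X))_i$ shows that on the stochastic interval $[0,T_k\wedge\tau]$, where $\|X_{s-}\|<k$ and no explosion occurs because $\tau<\Texpl$, the process $X$ agrees with a $C^2$-image of semimartingales and is therefore itself a semimartingale there. Since $\Texpl$ is predictable with announcing sequence $T_k$, we have $T_k\wedge\tau\uparrow\tau$, so localization yields that $X^\tau$ is a semimartingale; the single possible transition to $\Delta$ at $\tau\wedge\Tjump$ only contributes a finite-variation jump and is harmless. The identical localization on $[0,T_{\Delta})$, with the value $0$ after killing, gives that $X1_{[0,T_{\Delta})}$ is a semimartingale with state space $D\cup\{0\}$.

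For the characteristics I would invoke the representation of characteristics of Markov semimartingales from \citet{cinlar}: since $X^\tau$ arises from a time-homogeneous Markov process, up to $\tau$ there exist a measurable $b$, a measurable $c$ valued in $S_+(V)$, a L\'evy kernel $K$, and a continuous predictable increasing process $A$ with
\begin{equation*}
B_t=\int_0^{t} b(X_{s-})\,dA_s,\quad C_t=\int_0^{t} c(X_{s-})\,dA_s,\quad \nu(dt,d\xi)=K(X_{t-},d\xi)\,dA_t.
\end{equation*}
Reading off the predictable finite-variation part in It\^o's formula applied to $g_{u,\eta}\in C_b^2(D)$, the compensator of $g_{u,\eta}(X^\tau)$ equals $\int_0^{t}\kappa(g_{u,\eta})(X_{s-})\,dA_s$, with $\kappa$ the integro-differential operator of Definition~\ref{def:complete} built from $(b,c,K)$.

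The crucial step is to identify $A$ with Lebesgue measure. By Lemma~\ref{lem:gmart} the same compensator equals the absolutely continuous process $\int_0^{t}\frac1\eta\big(P_{\eta}e^{\langle u,X_{s-}\rangle}-e^{\langle u,X_{s-}\rangle}\big)\,ds$, so equating the two predictable finite-variation processes gives $\mathbb{P}_x$-a.s.
\begin{equation*}
\kappa(g_{u,\eta})(X_{s-})\,dA_s=\tfrac1\eta\big(P_{\eta}e^{\langle u,X_{s-}\rangle}-e^{\langle u,X_{s-}\rangle}\big)\,ds,\qquad u\in\im V,\ \eta>0.
\end{equation*}
Decomposing $A$ into its absolutely continuous and singular parts, the left-hand side integrated against $dA^{\mathrm{sing}}$ must vanish for all $u,\eta$; since $\mathcal{C}$ is a complete class (Lemma~\ref{lem:fullcomplete}), this forces $(b,c,K)(X_{s-})=0$ for $dA^{\mathrm{sing}}$-almost every $s$, so the singular part contributes to none of the characteristics and may be discarded. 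Writing $dA_s=a(X_{s-})\,ds$ for the remaining density $a\ge 0$ and absorbing $a$ into $b$, $c$ and $K$ yields exactly the form~\eqref{eq:characteristicsfinal}; completeness then determines these densities uniquely from the numbers $\kappa(g_{u,\eta})(x)$, and the stated properties of $K$ (integrability of $\|\xi\|^2\wedge 1$, $K(x,\{0\})=0$ and $x+\supp(K(x,\cdot))\subseteq D_{\Delta}$) are inherited from Definition~\ref{def:complete}.

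The main obstacle is precisely this identification of $A$ with $dt$: the representation of \citet{cinlar} only delivers characteristics that are absolutely continuous with respect to a general additive functional $A$, and ruling out a singular component—together with the subsequent absorption of the density—relies essentially on combining the absolute continuity of the $g_{u,\eta}$-compensators from Lemma~\ref{lem:gmart} with the completeness of $\mathcal{C}$. The remaining verifications, namely the measurability of $b$, $c$, $K$ and the kernel and support properties, are routine once this identification is in place.
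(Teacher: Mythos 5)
Your proof of the semimartingale property itself is essentially the paper's own argument: Lemma~\ref{lem:gmart} makes each $g_{u,\eta}(X)$ a special semimartingale, the full-class property from Lemma~\ref{lem:fullcomplete} together with It\^o's formula applied to the function $h$ of~\eqref{eq:fcth} recovers $X$ locally as a $C^2$-image of semimartingales, and localization handles explosion and killing. No objection to that half.

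The gap is at the start of your identification of the characteristics. You invoke, ``since $X^{\tau}$ arises from a time-homogeneous Markov process'', a representation with \emph{state-dependent} coefficients $b(X_{s-})$, $c(X_{s-})$, $K(X_{t-},d\xi)$ integrated against a \emph{continuous} predictable increasing process $A$. That statement is \citet[Theorem 6.27]{cinlar}, and its hypotheses include quasi-left continuity (the Hunt property) of the process. Indeed, continuity of $A$ already forces $\nu(\{t\}\times V)=0$ for all $t$, which by \citet[Proposition II.2.9]{jacod} \emph{is} quasi-left continuity; so your starting point presupposes it. For an affine process this property is not available at that stage: it is not part of the definition, stochastic continuity only excludes fixed times of discontinuity (not jumps at predictable stopping times), and the Feller property is open. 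What one actually has for a general Markov semimartingale is only \citet[Theorem 6.25]{cinlar}, i.e.\ the form~\eqref{eq:characteristics}: densities $\widetilde b,\widetilde c,\widetilde K$ that are merely \emph{optional, path-dependent} processes, integrated against a predictable finite-variation additive process $F$ that may have jumps. The paper is therefore forced into the opposite order: equate the resulting compensator with the absolutely continuous compensator of Lemma~\ref{lem:gmart} (equation~\eqref{eq:eqgen}), use completeness of $\mathcal{C}$ to discard the set $\Lambda$ on which all $\widetilde{\mathcal{L}}g_{\eta,u}$ vanish and so conclude $dF_t\ll dt$, deduce quasi-left continuity of $X^{\tau}$ from the atomless compensator, and only \emph{then} apply Theorem 6.27 to turn the optional densities into functions of the state. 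Your Lebesgue-decomposition argument for killing the singular part of $A$ is exactly the right mechanism---it is the paper's $\Lambda$-argument---but you run it inside a representation you have not justified, so as written the argument is circular.

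A second, related unjustified step: even granting your representation, ``writing $dA_s=a(X_{s-})\,ds$ for the remaining density'' is not free. The Radon--Nikodym density of the absolutely continuous part of $A$ is a priori only a predictable process $a_s(\omega)$; that it can be chosen as a function of the current state is again a Markov/additive-functional statement of the same nature as Theorem 6.27, not a triviality, and without it the products $b(X_{s-})a_s$, $c(X_{s-})a_s$, $K(X_{s-},\cdot)a_s$ need not be of the form~\eqref{eq:characteristicsfinal}. In the paper this issue never arises, because the passage from path-dependent to state-dependent coefficients is performed exactly once, at the very end, after $F$ has been replaced by $t$.
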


\begin{proof}
We adapt the proof of~\citet[Theorem 7.9 (ii), (iii)]{cinlar} to our setting.
By Lemma~\ref{lem:gmart}, 
\[
g_{u,\eta}(X)=\frac{1}{\eta}\int_0^{\eta}\Phi(s,u)e^{\langle \psi(s,u), X\rangle} ds
\] 
is a semimartingale for every $u \in \mathcal{U}$ and $\eta >0$. Since Lemma~\ref{lem:fullcomplete} asserts that $\mathcal{C}$, as defined in~\eqref{eq:mathcalC}, is a full class, an application of It\^o's formula to the function $h_i$ appearing in~\eqref{eq:fcth} shows that $X_i$ coincides with a semimartingale on each stochastic interval $[0, \tau_r[$, where 
\[
\tau_r=\inf\{t\geq 0 \, |\, \|X_t\|\geq r\}\wedge T_{\Delta}.
\]
Since we have $\mathbb{P}_x$-a.s.~$\lim_{r \to \infty} \tau_r =T_{\Delta}$ and since being a semimartingale is a local property (see~\citet[Proposition I.4.25]{jacod}), we conclude that $X1_{[0, T_{\Delta})}$ is a semimartingale.

Let now $\tau$ denote a stopping time with $\tau < \Texpl$. Then $X^{\tau}$ is also a semimartingale with state space $D_{\Delta}$, 
since explosion is avoided and the transition to $\Delta$ can only occur via killing, that is, a jump to $\Delta$, which is incorporated in the jump characteristic (see~\citet[Section 3]{filipoyor}). 

By~\citet[Theorem 6.25]{cinlar}, one can find a version of the characteristics $(B,C,\nu)$ of $X^{\tau}$, which is of the form
\begin{equation}\label{eq:characteristics}
\begin{split}
B_{t,i}&=\int_0^{t\wedge \tau} \widetilde{b}_{s-,i}dF_s,\\
C_{t,ij}&=\int_0^{t \wedge \tau} \widetilde{c}_{s-,ij}dF_s,\\
\nu(\omega; dt,d\xi)&=1_{[0,\tau]}dF_t(\omega)\widetilde{K}_{\omega,t} (d\xi),
\end{split}
\end{equation}
where $F$ is an additive process of finite variation, which is $\mathbb{P}_x$-indistinguishable from an $(\mathcal{F}_t)$-predictable process, $\widetilde{b}$ and $\widetilde{c}$ are $(\mathcal{F}_t)$-optional processes 
with values in $V$ and $S_+(V)$, respectively, and 
$\widetilde{K}_{\omega,t}(d\xi)$ is a positive kernel 
from $(\Omega \times \re_+, \mathcal{O}(\mathcal{F}_t))$\footnote{Here, $\mathcal{O}(\mathcal{F}_t)$ denotes the $(\mathcal{F}_t)$-optional $\sigma$-algebra.} 
into $(V, \mathcal{B}(V))$, which satisfies $\int_{V} (\|\xi\|^2 \wedge 1)\widetilde{K}_{\omega,t}(d\xi)< \infty$, 
$\widetilde{K}_{\omega,t}(\{0\})=0$ and $X_t(\omega) + \supp(\widetilde{K}_{\omega,t}) \subseteq D_{\Delta}$ for all $t \in [0, \tau]$ and $\mathbb{P}_x$-almost all $\omega$. 
Moreover, by~\citet[Theorem II.2.42]{jacod}, for every $f \in C^2_b(V)$, the process
\begin{multline}\label{eq:localmart}
f(X_t^{\tau})-f(x)-\int_0^{t\wedge \tau} \langle \widetilde{b}_{s-},\nabla f(X_{s-}) \rangle dF_s -
\frac{1}{2}\int_0^{t\wedge \tau}\sum_{i,j} \widetilde{c}_{s-,ij} D_{ij}f(X_{s-}) dF_s\\
-\int_0^{t\wedge \tau}\int_V \left(f(X_{s-}+\xi)-f(X_{s-})-\langle \nabla f(X_{s-}),\chi(\xi) \rangle\right)\widetilde{K}_{\omega,s-}( d\xi) dF_s
\end{multline}
is a $(\mathcal{F}_t,\mathbb{P}_x)$-local martingale and the last three terms are of finite variation. 
Let us denote 
\begin{multline*}
\widetilde{\mathcal{L}}f(X_{t-}(\omega)):=\langle \widetilde{b}_{t-},\nabla f(X_{t-}(\omega)) \rangle -\frac{1}{2}\sum_{i,j} \widetilde{c}_{t-,ij} D_{ij}f(X_{t-}(\omega)) \\
 -\int_V \left(f(X_{t-}(\omega)+\xi)-f(X_{t-}(\omega))-\langle \nabla f(X_{t-}(\omega)),\chi(\xi) \rangle\right)\widetilde{K}_{\omega, t-}( d\xi).
\end{multline*}

As proved in Lemma~\ref{lem:fullcomplete}, the class of functions $\mathcal{C}$ defined in~\eqref{eq:mathcalC} is complete. Let now $\mathcal{\widetilde{C}}\subset \mathcal{C}$ be the countable set satisfying the property stated in Definition~\ref{def:completeclass} and 
let $g_{\eta, u} \in \mathcal{\widetilde{C}}$ for some $u \in \im V$ and $\eta >0$.
Then Lemma~\ref{lem:gmart} and Definition~\ref{def:extendedgen} imply that
\begin{multline}
g_{\eta,u}(X_t^{\tau})-g_{\eta,u}(x)-\int_0^{t\wedge \tau} \mathcal{G}g_{\eta,u}(X_{s-}) ds \\
=g_{\eta,u}(X_t^{\tau})-g_{\eta,u}(x)-\int_0^{t\wedge \tau}\frac{1}{\eta}\left(P_{\eta}e^{\langle u, X_{s-}\rangle}-e^{\langle u,X_{s-}\rangle}\right)ds\label{eq:getamart}
\end{multline}
is a $(\mathcal{F}_t,\mathbb{P}_x)$-martingale, while $(\int_0^{t\wedge \tau} \mathcal{G}g_{\eta,u}(X_{s-}) ds)$ 
is a predictable finite variation process. Due to~\eqref{eq:localmart} and uniqueness of the canonical decomposition of the special semimartingale $g_{\eta,u}(X^{\tau})$ (see~\citet[Definition I.4.22, Corollary I.3.16]{jacod}), 
we thus have
\begin{align}\label{eq:eqgen}
\int_0^{t\wedge \tau}\widetilde{\mathcal{L}}g_{\eta,u}(X_{s-}) dF_s=\int_0^{t \wedge \tau} \mathcal{G}g_{\eta,u}(X_{s-}) ds \quad \textrm{ up to an evanescent set. }
\end{align}
Set now 
\[
\Lambda=\left\{(\omega,t): \widetilde{\mathcal{L}}g_{\eta,u}(X_{(t\wedge \tau \wedge T_{\Delta})-}(\omega))=0 \textrm{ for every } g_{\eta,u} \in \mathcal{\widetilde{C}}\right\}.
\]
Then the characteristic property~\eqref{eq:completeclass} of $\mathcal{\widetilde{C}}$ implies that $\Lambda$ is exactly the set where $\widetilde{b}=0$, $\widetilde{c}=0$ and $\widetilde{K}=0$. 
Hence we may replace $F$ by $1_{\Lambda^c}F$ without altering~\eqref{eq:characteristics}, that is, 
we can suppose that $1_{\Lambda}F=0$. This property together with~\eqref{eq:eqgen} implies that $dF_t \ll dt$ $\mathbb{P}_x$-a.s. 
Hence we know that there exists a triplet $(b', c', K')$ such that $F$ replaced by $t$ and $(\widetilde{b}, \widetilde{c}, \widetilde{K})$ replaced by $(b', c', K')$ satisfy all the conditions of~\eqref{eq:characteristics}.
In particular, we have by~\citet[Proposition II.2.9 (i)]{jacod} that $X^{\tau}$ is quasi-left continuous. Due to~\citet[Theorem 6.27]{cinlar}, it thus follows that 
\begin{align*}
b'_{t }&=b(X_t)1_{[0, \tau]},\\
c'_{t }&= c(X_t)1_{[0, \tau]},\\
K'_{\omega,t} (d\xi)&= K(X_t,d\xi)1_{[0, \tau]},
\end{align*}
where the functions $b$, $c$ and the kernel $K$ have the properties stated in~\eqref{eq:characteristicsfinal}.
This proves the assertion.
\end{proof}

\section{Regularity}\label{sec:regularity}

By means of the above derived semimartingale property, 
in particular the fact that the characteristics are absolutely continuous with respect 
to the Lebesgue measure, we can prove that every affine process is regular in the following sense:

\begin{definition}[Regularity]\label{def:regularity}
An affine process $X$ is called \emph{regular} if for every $u \in \mathcal{U}$ the derivatives
\begin{align}\label{eq:FRdef}
F(u) = \frac{\partial \Phi(t,u)}{\partial t}\Bigg |_{t=0}, \qquad
R(u) = \frac{\partial \psi(t,u)}{\partial t}\Bigg |_{t=0}
\end{align}
exist and are continuous on $\mathcal{U}^m$ for every $m \geq 1$.
\end{definition}

\begin{remark}
In the case of the canonical state space $D=\re_+^m \times \re^{n-m}$, the derivative of $\phi(t,u)$ at $t=0$ is also denoted by $F(u)$ (see~\citet[Equation (3.10)]{dfs} and Remark~\ref{rem:definitionaffine}). Since $\Phi(t,u)=e^{\phi(t,u)}$, we have
\[
\partial_t \Phi(t,u)|_{t=0}=e^{\phi(0,u)}\partial_t\phi(t,u)|_{t=0}=\partial_t \phi(t,u)|_{t=0}.
\]
Hence our definition of $F$ coincides with the one in~\citet{dfs}.
\end{remark}

\begin{lemma}\label{lem:finitevariation}
Let $X$ be an affine process. Then the functions $t \mapsto \Phi(t,u)$ and $t \mapsto \psi_i(t,u)$, $i \in \{1,\ldots,n\}$, defined in~\eqref{eq:affineprocess} are of finite variation for all $u \in \mathcal{U}$. 
\end{lemma}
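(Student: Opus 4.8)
The plan is to reduce the statement to the absolute continuity in $t$ of the functions $H_x(t):=\mathbb{E}_x[e^{\langle u,X_t\rangle}]=\Phi(t,u)e^{\langle\psi(t,u),x\rangle}$ for enough initial points $x\in D$, and then to recover $\Phi$ and $\psi$ from these by inverting a fixed linear system built from affinely independent points. Throughout I fix $u\in\mathcal{U}$, say $u\in\mathcal{U}^m$, so that $|e^{\langle u,y\rangle}|=e^{\langle\Re u,y\rangle}\le m$ for all $y\in D$.

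First I would exploit the semimartingale property. By Theorem~\ref{th:semimartingale} the stopped process $X^\tau$ (with $0<\tau<\Texpl$) is a semimartingale whose characteristics are absolutely continuous with respect to Lebesgue measure, with densities $b$, $c$, $K$. Since for $u\in\mathcal{U}^m$ the map $y\mapsto e^{\langle u,y\rangle}$ is bounded on $D$ with bounded derivatives, It\^o's formula, in the form of the local-martingale statement used in the proof of Theorem~\ref{th:semimartingale} (applied to the real and imaginary parts), shows that
\[
e^{\langle u,X^\tau_t\rangle}-e^{\langle u,x\rangle}-\int_0^{t\wedge\tau}\mathcal{L}^u(X_{s-})\,ds
\]
is a local martingale, where $\mathcal{L}^u(y)=e^{\langle u,y\rangle}\big(\langle b(y),u\rangle+\tfrac12\langle u,c(y)u\rangle+\int_V(e^{\langle u,\xi\rangle}-1-\langle u,\chi(\xi)\rangle)K(y,d\xi)\big)$. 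As $e^{\langle u,X^\tau\rangle}$ is bounded it is a special semimartingale, so its predictable finite-variation part $\int_0^{\cdot\wedge\tau}\mathcal{L}^u(X_{s-})\,ds$ has locally integrable variation; taking expectations along a localizing sequence and passing to the limit yields
\[
\mathbb{E}_x\!\left[e^{\langle u,X^\tau_t\rangle}\right]=e^{\langle u,x\rangle}+\int_0^t\mathbb{E}_x\!\left[\mathcal{L}^u(X_{s-})1_{\{s\le\tau\}}\right]ds .
\]
Letting $\tau\uparrow\Texpl$ and using the convention $e^{\langle u,\Delta\rangle}=0$ together with Theorem~\ref{th:Xcadlaginfinity} and Lemma~\ref{lem:uint} to control explosion and killing, bounded convergence on the left and the local integrability of $s\mapsto\mathbb{E}_x[\mathcal{L}^u(X_s)]$ on the right give
\[
H_x(t)=e^{\langle u,x\rangle}+\int_0^t\rho_x(s)\,ds,\qquad \rho_x(s):=\mathbb{E}_x[\mathcal{L}^u(X_s)].
\]
Hence $t\mapsto H_x(t)$ is absolutely continuous, in particular of finite variation on compacts, for every $x\in D$.

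Next I would recover $\Phi$ and $\psi$. By the semiflow property of Proposition~\ref{prop:PhiPsiproperties}, $\Phi(t+s,u)\ne0$ forces $\Phi(t,u)\ne0$, so the zero set of $t\mapsto\Phi(t,u)$ is a half-line $[t_0,\infty)$ and $\Phi(\cdot,u)\ne0$ on $[0,t_0)$. On $[0,t_0)$ I take the $n+1$ affinely independent points $x_1,\dots,x_{n+1}$ from Assumption~\ref{ass:statespace} and write, via the continuous logarithm guaranteed by Proposition~\ref{prop:PhiPsiproperties},
\[
\log H_{x_j}(t)=\log\Phi(t,u)+\langle\psi(t,u),x_j\rangle,\qquad j=1,\dots,n+1 .
\]
Each $H_{x_j}$ is continuous, nonvanishing and of finite variation on compact subintervals of $[0,t_0)$, hence so is its continuous logarithm. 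Affine independence makes the matrix with rows $(1,x_j)$ invertible, so $(\log\Phi(t,u),\psi_1(t,u),\dots,\psi_n(t,u))$ is the image of $(\log H_{x_j}(t))_j$ under a fixed linear map and is therefore of finite variation; thus $\psi_i(\cdot,u)$ and $\Phi(\cdot,u)=\exp(\log\Phi(\cdot,u))$ are of finite variation on $[0,t_0)$. Since $\Phi(t,u)=H_{x}(t)e^{-\langle\psi(t,u),x\rangle}$ is continuous and equals the constant $0$ on $[t_0,\infty)$, it is of finite variation on all of $\re_+$, which settles the claim for $\Phi$; for $\psi$ the assertion holds on its domain $[0,t_0)$.

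The hard part will be the passage from the local-martingale identity to the integral equation for $H_x$: I must justify that the predictable finite-variation part is genuinely $\mathbb{P}_x$-integrable and that the limit $\tau\uparrow\Texpl$ may be interchanged with the expectation. This is precisely where the restriction $u\in\mathcal{U}^m$ (boundedness of $e^{\langle u,\cdot\rangle}$), the absolute continuity of the characteristics from Theorem~\ref{th:semimartingale}, and the description of explosion and killing in Theorem~\ref{th:Xcadlaginfinity} and Lemma~\ref{lem:uint} enter; once $H_x$ is known to be absolutely continuous, the extraction of $\Phi$ and $\psi$ by continuous logarithms and linear algebra is routine.
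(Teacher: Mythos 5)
Your second half is fine, and in fact tidier than the corresponding step in the paper: recovering $(\log\Phi(\cdot,u),\psi(\cdot,u))$ from $(\log H_{x_j})_{j=1}^{n+1}$ by inverting the \emph{fixed} matrix with rows $(1,x_j^{\top})$, together with the semiflow argument showing the zero set of $t\mapsto\Phi(t,u)$ is a terminal half-line, is sound. The genuine gap sits in the first half, exactly at the step you yourself flag as ``the hard part'': passing from the local-martingale identity to $H_x(t)=e^{\langle u,x\rangle}+\int_0^t\rho_x(s)\,ds$. That passage needs $\mathbb{E}_x\int_0^{t\wedge\tau}|\mathcal{L}^u(X_{s-})|\,ds<\infty$, both for Fubini and to identify $\lim_n\mathbb{E}_x[A_{t\wedge\sigma_n}]$ with $\mathbb{E}_x[A_t]$ along a localizing sequence, and none of the ingredients you list supplies this. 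Theorem~\ref{th:semimartingale} gives only measurability of $b$, $c$, $K$ with no growth control; for $u\in\im V$ one has $|e^{\langle u,y\rangle}|\equiv 1$, so $\mathcal{L}^u$ is not bounded, and even a posteriori (once Theorem~\ref{th:regularity} is available) $\mathcal{L}^u$ grows affinely in $\|y\|$ while affine processes need not possess first moments. Boundedness of $e^{\langle u,X^{\tau}\rangle}$ genuinely does not rescue the argument: a bounded special semimartingale with absolutely continuous paths can have an expectation of \emph{infinite} variation. Take $T_1$ exponentially distributed, $h(s)=\frac{1}{1-s}\sin\bigl(\frac{1}{1-s}\bigr)$ for $s<1$ (so $\int_0^{\cdot}h$ stays bounded while $\int_0^1|h(s)|\,ds=\infty$), and $Y_t=\int_0^{t\wedge T_1}h(s)\,ds$: then $Y$ is bounded with absolutely continuous paths, its canonical decomposition is $Y=0+Y$, yet $\mathbb{E}[Y_t]=\int_0^t h(s)e^{-s}\,ds$ has infinite variation on $[0,1]$. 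So pathwise absolute continuity of the finite-variation part plus boundedness --- which is all you have at this stage --- does not yield finite variation of $t\mapsto\mathbb{E}_x[e^{\langle u,X_t\rangle}]$, and I see no repair of the expectation step that does not presuppose moment bounds unavailable before regularity is proved.

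The paper avoids expectations entirely and argues pathwise. It realizes $n+1$ independent copies $X^1,\dots,X^{n+1}$ of the process, started at the affinely independent points of Assumption~\ref{ass:statespace}, on one product space; each $M_t^{T,u,i}=\Phi(T-t,u)e^{\langle\psi(T-t,u),X_t^{i}\rangle}$ is a \emph{bounded true martingale} by the tower property (no integrability of any generator is needed), hence a semimartingale. Taking logarithms and inverting the random matrix with rows $(1,X_{t,1}^{i},\dots,X_{t,n}^{i})$ --- invertible on a positive random time interval by right-continuity of the paths --- exhibits $(\ln\Phi(T-t,u),\psi_1(T-t,u),\dots,\psi_n(T-t,u))$ as a vector of semimartingales on that interval. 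Since these functions are deterministic, and a deterministic semimartingale is necessarily of finite variation, the claim follows near every $T$ in the non-vanishing set of $\Phi(\cdot,u)$. If you wish to keep your cleaner deterministic linear algebra, you would still need to establish finite variation of the functions $H_{x_j}$ by some such pathwise device (semimartingale information about the trajectories transferred to the deterministic coefficients via the affine structure), not by taking expectations of the generator identity.
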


\begin{proof}
Due to Assumption~\ref{ass:statespace}, there exist $n+1$ vectors such that $(x_1, \ldots, x_{n})$ are linearly independent and 
$x_{n+1}=\sum_{i=1}^n \lambda_ix_i$ for some $\lambda \in V$ with $\sum_{i=1}^n \lambda_i \neq 1$.

Let us now take $n+1$ affine processes $X^1, \ldots, X^{n+1}$ such that 
\[
 \mathbb{P}_{x_i}[X_0^i=x_i]=1
\]
for all $i \in \{1, \ldots,n+1\}$. It then follows from Theorem~\ref{th:semimartingale} that, for every $i \in \{1, \ldots,n+1\}$, $X^i$ is a semimartingale with respect to
the filtered probability space $(\Omega,\mathcal{F}, (\mathcal{F}_t), \mathbb{P}_{x_i})$. 
We can then construct a filtered probability space 
$(\Omega',\mathcal{F}', (\mathcal{F}'_t), \mathbb{P}')$, with respect to which $X_1, \ldots, X_{n+1}$ 
are independent semimartingales such that $\mathbb{P}' \circ (X^i)^{-1}= \mathbb{P}_{x_i}$.
One possible construction is the product probability space
$(\Omega^{n+1},\otimes_{i=1}^{n+1}\mathcal{F}, (\otimes_{i=1}^{n+1}\mathcal{F}_t),  \otimes_{i=1}^{n+1}\mathbb{P}_{x_i})$.

We write $y_i=(1, x_i)^{\top}$ and $Y^{i}=(1, X^{i})^{\top}$
for $i \in \{1,\ldots, n+1\}$. Then the definition of $x_i$ implies that $(y_1, \ldots, y_{n+1})$ are linearly independent.
Moreover, as $X^i$ exhibits c\`adl\`ag paths for all $i \in \{1, \ldots,n+1\}$, there exists some stopping time $\delta >0$ 
such that, for all $\omega \in \Omega'$ and $t \in [0, \delta(\omega))$, the vectors $(Y_{t}^{1}(\omega), \ldots, Y_{t}^{{n+1}}(\omega))$ are also linearly independent.
Let now $T >0$ and $u \in \mathcal{U}$ be fixed and choose some $0 < \varepsilon(\omega) \leq \delta(\omega)$ 
such that, for all $t \in [0, \varepsilon(\omega))$, $\Phi(T-t,u) \neq 0$.

Denoting the $(\mathcal{F}'_t, \mathbb{P}')$-martingales $\Phi(T-t,u)e^{\langle \psi(T-t,u), X^{i}_t\rangle}$ by $M_t^{T,u,i}$ 
and choosing the right branch of the complex logarithm, we thus have for all $t \in [0,\varepsilon(\omega))$
\[
\small
\left(\begin{array}{cccc}
1 &X_{t,1}^{1}(\omega) &\ldots&X_{t,n}^{1}(\omega)\\
\vdots &\vdots &\ddots& \vdots\\
1 &X_{t,1}^{n+1}(\omega) &\ldots&X_{t,n}^{n+1}(\omega)
\end{array}\right)^{-1}
\left(\begin{array}{c}
\ln M_t^{T,u,1}(\omega)\\
\vdots\\
\ln M_t^{T,u,{n+1}}(\omega)
\end{array}\right)=
\left(\begin{array}{c}
\ln \Phi(T-t,u)\\
\psi_1(T-t,u)\\
\vdots\\
\psi_n(T-t,u)
\end{array}\right).
\]
This implies that $(\Phi(s,u))_{s}$ and $(\psi(s,u))_{s}$ coincide on the stochastic 
interval $(T-\varepsilon(\omega),T]$ with deterministic semimartingales and are thus of finite variation. 
As this holds true for all $T>0$, we conclude that $t \mapsto \Phi(t,u)$ and $t \mapsto \psi_i(t,u)$ are of finite variation.
\end{proof}

Using Lemma~\ref{lem:finitevariation} and Theorem~\ref{th:semimartingale}, we are now prepared to prove regularity of affine processes.
Additionally, our proof reveals that the functions $F$ and $R$ 
have parameterizations of L\'evy-Khintchine type and that the (differential) 
semimartingale characteristics introduced in~\eqref{eq:characteristicsfinal}
depend in an affine way on $X$.

\begin{theorem}\label{th:regularity}
Every affine process is regular. Moreover, the functions $F$ and $R$, as defined in~\eqref{eq:FRdef}, are of the form
\begin{align*}
F(u) &=\left\langle u, b\right\rangle +\frac{1}{2}\left\langle u, a u\right\rangle-c \\
&\quad + \int_V \left(e^{\langle u, \xi \rangle}-1-\left\langle u, \chi(\xi)\right\rangle\right) m(d\xi), \quad u \in \mathcal{U},\\
\langle R(u),x\rangle &=\langle u,B(x)\rangle +\frac{1}{2}\left\langle u, A(x) u\right\rangle-\langle\gamma,x\rangle \\
&\quad + \int_V \left(e^{\langle u, \xi \rangle}-1-\left\langle u, \chi(\xi)\right\rangle\right) M(x,d\xi),  \quad u \in \mathcal{U},
\end{align*} 
where $\chi:V \to V$ denotes some truncation function such that $\chi(\Delta-x)=0$ for all $x \in D$, $b\in V$, $a \in S(V)$, $m$ is a (signed) measure, $c \in \re$, $\gamma \in V$ and $x \mapsto B(x)$, $x \mapsto A(x)$, $x \mapsto M(x,d\xi)$ are restrictions of linear maps on $V$ such that 
\begin{align*}
b(x)&=b+B(x),\\
c(x)&=a+A(x),\\
K(x,d\xi)&=m(d\xi)+M(x,d\xi)+(c +\langle \gamma, x\rangle)\delta_{(\Delta-x)}(d\xi).
\end{align*}
Here, the left hand side corresponds to the (differential) semimartingale characteristics introduced in~\eqref{eq:characteristicsfinal}.

Furthermore, on the set $\mathcal{Q}=\{(t,u)\in \re_+ \times \mathcal{U} \,|\, \Phi(s,u)\neq 0, \textrm{ for all } s \in [0,t]\}$, 
the functions $\Phi$ and $\psi$ satisfy the ordinary differential equations
\begin{align}
\partial_t \Phi(t,u)&=\Phi(t,u)F(\psi(t,u)), &\quad &\Phi(0,u)=1,\label{eq:RiccatiFgen}\\
\partial_t \psi(t,u)&=R(\psi(t,u)), &\quad &\psi(0,u)=u \in \mathcal{U}.\label{eq:RiccatiRgen}
\end{align}
\end{theorem}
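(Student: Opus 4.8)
The plan is to fuse the two descriptions of the generator of $X$ that we already have — the probabilistic one from Lemma~\ref{lem:gmart} and the semimartingale one from Theorem~\ref{th:semimartingale} — into one scalar integral identity and then differentiate it in time at $t=0$. Write $\mathcal{R}(v,x):=\langle b(x),v\rangle+\frac{1}{2}\langle v,c(x)v\rangle+\int_V(e^{\langle v,\xi\rangle}-1-\langle v,\chi(\xi)\rangle)K(x,d\xi)$ for the symbol attached to the characteristics $(b,c,K)$ of~\eqref{eq:characteristicsfinal} (not to be confused with the function $R$). Applying It\^o's formula to $X^{\tau}$ and to $x\mapsto e^{\langle v,x\rangle}$ with $v\in\mathcal{U}$ gives that the extended generator of $X^{\tau}$ acts as $\mathcal{L}e^{\langle v,\cdot\rangle}(x)=e^{\langle v,x\rangle}\mathcal{R}(v,x)$, and hence, interchanging $\mathcal{L}$ with the $s$-integral (licit by Remark~\ref{rem:fullcomplete}~(i)), $\mathcal{L}g_{u,\eta}(x)=\frac{1}{\eta}\int_0^{\eta}\Phi(s,u)e^{\langle\psi(s,u),x\rangle}\mathcal{R}(\psi(s,u),x)\,ds$. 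On the other hand Lemma~\ref{lem:gmart} gives $\mathcal{G}g_{u,\eta}(x)=\frac{1}{\eta}(\Phi(\eta,u)e^{\langle\psi(\eta,u),x\rangle}-e^{\langle u,x\rangle})$.

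First I would identify these two representations of the drift of the special semimartingale $g_{u,\eta}(X^{\tau})$. Uniqueness of the canonical decomposition yields $\int_0^{t\wedge\tau}(\mathcal{G}-\mathcal{L})g_{u,\eta}(X_{s-})\,ds=0$ up to an evanescent set; letting $t\downarrow 0$ and exploiting stochastic continuity together with the continuity of $\mathcal{G}g_{u,\eta}$ upgrades this to the pointwise integral equation
\begin{equation}
\int_0^{\eta}\Phi(s,u)e^{\langle\psi(s,u),x\rangle}\mathcal{R}(\psi(s,u),x)\,ds=\Phi(\eta,u)e^{\langle\psi(\eta,u),x\rangle}-e^{\langle u,x\rangle},\qquad x\in D,\ u\in\mathcal{U},\ \eta>0.
\tag{$\star$}
\end{equation}
This transfer from a $dt\otimes d\mathbb{P}_x$-almost-everywhere identity to an equality valid at the \emph{deterministic} starting point $x$ — made delicate by the fact that the characteristics $(b,c,K)$ are a priori only measurable — is the step I expect to be the main obstacle.

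Once $(\star)$ is available the rest is comparatively mechanical. Since $\Phi,\psi$ are continuous and $\mathcal{R}(\cdot,x)$ is continuous on $\{\Phi\neq0\}$ (where $\psi$ takes values in $\mathcal{U}$ by Proposition~\ref{prop:PhiPsiproperties}~\ref{item:rangePsi}), the integrand in $(\star)$ is continuous in $s$, so by the fundamental theorem of calculus $\eta\mapsto\Phi(\eta,u)e^{\langle\psi(\eta,u),x\rangle}$ is $C^1$ with derivative equal to the integrand. Evaluating at $\eta=0$, using $\Phi(0,u)=1$ and $\psi(0,u)=u$, and invoking the $n+1$ affinely independent points of Assumption~\ref{ass:statespace} to decouple, I obtain that $F(u)=\partial_t\Phi(t,u)|_{t=0}$ and $R(u)=\partial_t\psi(t,u)|_{t=0}$ exist and satisfy $F(u)+\langle R(u),x\rangle=\mathcal{R}(u,x)$ for every $x\in D$. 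The left-hand side is affine in $x$, so the triplet $(b(x),c(x),K(x,\cdot))$, which is uniquely pinned down by $u\mapsto\mathcal{R}(u,x)$ thanks to completeness (Lemma~\ref{lem:fullcomplete}), must depend affinely on $x$; writing $b(x)=b+B(x)$, $c(x)=a+A(x)$ and splitting off the killing atom $(c+\langle\gamma,x\rangle)\delta_{(\Delta-x)}$, whose contribution to $\mathcal{R}$ is $-(c+\langle\gamma,x\rangle)$ because $\chi(\Delta-x)=0$ and $e^{\langle u,\cdot\rangle}$ vanishes at $\Delta$, yields exactly the claimed L\'evy--Khintchine forms of $F$ and $R$. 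Continuity of $F$ and $R$ on each $\mathcal{U}^m$ then follows from these representations by dominated convergence, membership $u\in\mathcal{U}^m$ supplying the domination of $e^{\langle\Re u,\xi\rangle}$ on the supports.

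Finally, to pass from the derivatives at $t=0$ to the generalized Riccati equations on all of $\mathcal{Q}$, I would differentiate the semiflow relations~\eqref{eq:flowprop} of Proposition~\ref{prop:PhiPsiproperties}~\ref{item:semiflowPhiPsi} in $s$ at $s=0$: from $\Phi(t+s,u)=\Phi(t,u)\Phi(s,\psi(t,u))$ and $\psi(t+s,u)=\psi(s,\psi(t,u))$ one reads off $\partial_t\Phi(t,u)=\Phi(t,u)F(\psi(t,u))$ and $\partial_t\psi(t,u)=R(\psi(t,u))$, where $\psi(t,u)\in\mathcal{U}$ on $\mathcal{Q}$ so that $F,R$ are evaluated inside their domain. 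The finite-variation property of Lemma~\ref{lem:finitevariation} guarantees that these one-sided derivatives are genuine derivatives, which completes the proof of regularity together with the stated equations.
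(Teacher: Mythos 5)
Your architecture is genuinely different from the paper's in its core: you discard Lemma~\ref{lem:finitevariation} and the paper's semiflow construction of auxiliary points $u_k=\psi(\varepsilon_k,u)$ at which $\partial_t|_{t=0}\Phi$ and $\partial_t|_{t=0}\psi$ exist a priori, and instead try to manufacture differentiability out of the fundamental theorem of calculus applied to the integral identity $(\star)$, obtained by comparing the two drift representations of the special semimartingale $g_{u,\eta}(X^{\tau})$ (the one from Lemma~\ref{lem:gmart} and the one from the characteristics of Theorem~\ref{th:semimartingale}). Everything downstream of $(\star)$ is correct and essentially coincides with the second half of the paper's proof: the FTC step, the decoupling of $\Phi$ and $\psi$ via the $n+1$ affinely independent points of Assumption~\ref{ass:statespace}, the passage from affineness of $u\mapsto F(u)+\langle R(u),x\rangle$ in $x$ to affineness of $(b,c,K)$ via L\'evy--Khintchine uniqueness and the killing atom, and the differentiation of the semiflow to get \eqref{eq:RiccatiFgen}--\eqref{eq:RiccatiRgen}. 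Were $(\star)$ available, your route would indeed be shorter than the paper's.

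But $(\star)$ is precisely the step you cannot get, and the mechanism you offer for it does not work. Uniqueness of the canonical decomposition only gives $\mathcal{G}g_{u,\eta}(X_{s-})=\mathcal{L}g_{u,\eta}(X_{s-})$ for Lebesgue-a.e.\ $s<\tau$, $\mathbb{P}_x$-a.s.; letting $s\downarrow 0$ and using $X_{s-}\to x$ controls the $\mathcal{G}$-side, which is continuous in the state, but says nothing about the $\mathcal{L}$-side, since $b$, $c$ and $K$ from \eqref{eq:characteristicsfinal} are merely measurable, so $\mathcal{L}g_{u,\eta}(X_{s-})$ need not converge to $\mathcal{L}g_{u,\eta}(x)$. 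Worse, the statement you need is not just unproven but unprovable as stated: differential characteristics are only determined up to sets never charged by the occupation measure $\int_0^{\cdot\wedge\tau}1_{\{X_{s-}\in\cdot\}}ds$ under any $\mathbb{P}_y$ (e.g.\ for deterministic drift on $\re$ one may replace $b\equiv 1$ by $b+1_{\{0\}}$ without altering \eqref{eq:characteristicsfinal}), and for such a version of $(b,c,K)$ the pointwise identity $\mathcal{G}g_{u,\eta}(x)=\mathcal{L}g_{u,\eta}(x)$ genuinely fails at some $x$. Since your proof needs $(\star)$ at the given $x$ (in fact at $n+1$ affinely independent points) \emph{before} any differentiability of $\Phi,\psi$ is known, the gap is fatal to the regularity claim itself, not merely to the identification of $F$ and $R$. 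This is exactly the difficulty the paper's ordering of arguments is designed around: Lemma~\ref{lem:finitevariation} plus the semiflow property secure the existence of the time-derivatives along $u_k\to u$ first, and only then is the along-the-path identity \eqref{eq:keyequality} exploited, in a limit $T_{u_k}\to 0$ in which the side whose convergence must be controlled is affine --- hence continuous --- in the state variable. To salvage your approach you would have to either re-choose the characteristics on the exceptional zero-potential set (and prove that the continuous data $\{\mathcal{G}g_{u,\eta}(x)\}_{u,\eta}$ is still of L\'evy--Khintchine form at every such point), or re-import the finite-variation/semiflow machinery, at which point you have reconstructed the paper's proof.
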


\begin{remark}
Recall that without loss of generality we identify $\Delta$ with some point in $V \setminus D$ 
such that every $f \in C^2_b(D)$ can be extended continuously 
to $D_{\Delta}$ with $f(\Delta)=0$.
\end{remark}

\begin{proof}
Let $m \geq 1$ and $u \in \mathcal{U}^m$ be fixed and choose $T_u >0$ such that $\Phi(T_u-t,u) \neq 0$ for all $t \in [0,T_u]$. 
As $t \mapsto \Phi(t,u)$ and $t \mapsto \psi(t,u)$ are of finite variation by Lemma~\ref{lem:finitevariation}, 
their derivatives with respect to $t$ exist almost everywhere and
we can write
\begin{align*}
\Phi(T_u-t,u)-\Phi(T_u,u)&=\int_0^t -d\Phi(T_u-s,u),\\ 
\psi_i(T_u-t,u)-\psi_i(T_u,u)&=\int_0^t -d\psi_i(T_u-s,u),
\end{align*}
for $i \in \{1, \ldots,n\}$. Moreover, by the semiflow property of $\Phi$ and $\psi$ 
(see Proposition~\ref{prop:PhiPsiproperties}~\ref{item:semiflowPhiPsi}), 
differentiability of $\Phi(t,u)$ and $\psi(t,u)$ with respect to $t$ at some $\varepsilon \in (0, T_u]$ implies that the derivatives 
$ \partial_t|_{t=0}\psi(t,\psi(\varepsilon,u))$ and $\partial_t|_{t=0} \Phi(t,\psi(\varepsilon,u))$
exist as well. Let now $(\varepsilon_k)_{k \in \mathbb{N}}$ 
denote a sequence of points where $\Phi(t,u)$ and $\psi(t,u)$ are differentiable such that $\lim_{k \to \infty} \varepsilon_k =0$.
Then there exists a sequence $(u_k)_{k \in \mathbb{N}}$ given by
\begin{align}\label{eq:uk}
u_k=\psi(\varepsilon_k,u) \in \mathcal{U} \textrm{  with } \lim_{k \to \infty} u_k=u
\end{align}
such that the derivatives
\begin{align}\label{eq:diffev}
\partial_t|_{t=0} \psi(t,u_k), \quad \partial_t|_{t=0} \Phi(t,u_k)
\end{align}
exist for every $k \in \mathbb{N}$. Moreover, since $|\mathbb{E}_x[\exp(\langle u, X_{\varepsilon_k}\rangle)]|<m$, there exists some constant
$M$ such that $u_k \in \mathcal{U}^M$ for all $k \in \mathbb{N}$.

Furthermore, due to Theorem~\ref{th:semimartingale}, 
the canonical semimartingale representation of $X^{\tau}$ (see~\citet[Theorem II.2.34]{jacod}), where $\tau$ is a stopping time with $\tau < \Texpl$, is given by
\[
X_t^{\tau}=x+\int_0^{t\wedge \tau} b(X_{s-})ds +N_t^{\tau}+\int_0^{t\wedge \tau} \int_V (\xi- \chi(\xi))\mu^{X^{\tau}}(\omega; ds,d\xi),
\]
where $\mu^{X^{\tau}}$ is the random measure associated with the jumps of $X^{\tau}$ and
$N^{\tau}$ is a local martingale, namely the sum of the continuous martingale part and the purely discontinuous one, that is, 
\[
\int_0^{t\wedge \tau} \int_V \chi(\xi)(\mu^{X^{\tau}}(\omega; ds,d\xi)-K(X_{s-},d\xi) ds).
\]

Let now $(u_k)$ be given by~\eqref{eq:uk}.
Applying It\^o's formula (relative to the measure $\mathbb{P}_x$) to each of the martingales 
$M_{t\wedge \tau}^{T_{u_k},u_k}$= $\Phi(T_{u_k}-(t\wedge \tau),u)e^{\langle \psi(T_{u_k}-(t\wedge \tau),u_k), X_{t\wedge \tau}\rangle}$, $k \in \mathbb{N}$, we obtain 
\begin{align*}
&M_{t \wedge \tau}^{T_{u_k},u_k}\\
&=M_{0}^{T_{u_k},u_k}+\int_0^{t\wedge\tau} M_{s-}^{T_{u_k},u_k}\left(\frac{-d\Phi(T_{u_k}-s,u_k)}{\Phi(T_{u_k}-s,u_k)}+\left\langle -d\psi(T_{u_k}-s,u_k), X_{s-}\right\rangle\right)\\
&\quad +\int_0^{t\wedge\tau} M_{s-}^{T_{u_k},u_k}\Bigg(\left\langle \psi(T_{u_k}-s,u_k), b(X_{s-})\right\rangle\\ 
&\quad +\frac{1}{2}\left\langle \psi(T_{u_k}-s,u_k), c(X_{s-})\psi(T_{u_k}-s,u_k)\right\rangle\\
&\quad + \int_V \left(e^{\langle \psi(T_{u_k}-s,u_k), \xi \rangle}-1-\left\langle \psi(T_{u_k}-s,u_k), \chi(\xi)\right\rangle\right) K(X_{s-},d\xi)\Bigg) ds\\
&\quad + \int_0^{t\wedge\tau} M_{s-}^{T_{u_k},u_k} \left\langle \psi(T_{u_k}-s,u_k), dN^{\tau}_{s}\right\rangle\\
&\quad + \int_0^{t\wedge\tau} \int_V  M_{s-}^{T_{u_k},u_k}\left(e^{\langle \psi(T_{u_k}-s,u_k), \xi \rangle}-1-\left\langle \psi(T_{u_k}-s,u_k), \chi(\xi)\right\rangle\right)\\
&\quad \quad \quad \quad \quad  \times \left(\mu^{X^{\tau}}(\omega; ds,d\xi)-K(X_{s-},d\xi) ds\right).
\end{align*}
As the last two terms are local martingales and as the rest is of finite variation, 
we thus have, for almost all $t \in [0,T_{u_k} \wedge \tau]$, $\mathbb{P}_x$-a.s.~for every $x \in D$,
\begin{equation}
\begin{split}\label{eq:keyequality}
&\frac{d\Phi(T_{u_k}-t,u_k)}{\Phi(T_{u_k}-t,u_k)}+\left\langle d\psi(T_{u_k}-t,u_k), X_{t-}\right\rangle\\
&\quad=\left\langle \psi(T_{u_k}-t,u_k), b(X_{t-})\right\rangle dt  +\frac{1}{2}\left\langle \psi(T_{u_k}-t,u_k), c(X_{t-})\psi(T_{u_k}-t,u_k)\right\rangle dt\\
&\quad \quad + \int_V \left(e^{\langle \psi(T_{u_k}-t,u_k), \xi \rangle}-1-\left\langle \psi(T_{u_k}-t,u_k), \chi(\xi)\right\rangle\right) K(X_{t-},d\xi) dt.
\end{split}
\end{equation}
By setting $t=T_{u_k}$ on a set of positive measure with $\mathbb{P}_x[\tau \geq T_{u_k}]$ and letting $T_{u_k} \to 0$, we obtain due to~\eqref{eq:diffev} for each $k \in \mathbb{N}$ and $x \in D$
\begin{equation}
\begin{split}
&\partial_t|_{t=0} \Phi(t,u_k)+\left\langle \partial_t|_{t=0}\psi(t,u_k), x\right\rangle\\
&\quad=\left\langle u_k, b(x)\right\rangle dt  +\frac{1}{2}\left\langle u_k, c(x)u_k\right\rangle dt
+ \int_V \left(e^{\langle u_k, \xi \rangle}-1-\left\langle u_k, \chi(\xi)\right\rangle\right) K(x,d\xi) dt.
\end{split}
\end{equation}
Since the right hand side is continuous in $u_k$, which is a consequence of the 
support properties of $K(x,\cdot)$ and the fact that $u_k \in \mathcal{U}^M$ for all $k \in \mathbb{N}$, the limit for $u_k \to u$ of the left hand side exists as well. By the affine independence of the $n+1$ elements in $D$, the coefficients $\partial_t|_{t=0} \Phi(t,u_k)$ and  $\partial_t|_{t=0}\psi(t,u_k)$ converge
for $u_k \to u$, whence the limit is affine, too.
Since $m \geq 1$ and $u$ was arbitrary, it follows that 
\[
 \left\langle u, b(x)\right\rangle dt  +\frac{1}{2}\left\langle u, c(x)u\right\rangle dt
+ \int_V \left(e^{\langle u, \xi \rangle}-1-\left\langle u, \chi(\xi)\right\rangle\right) K(x,d\xi) dt
\]
is an affine function in $x$ for all $u \in \mathcal{U}$.
 
By uniqueness of the L\'evy-Khintchine representation and the assumption that $D$ contains $n+1$ affinely independent elements, 
this implies that $x \mapsto b(x)$, $x \mapsto c(x)$ and $x \mapsto K(x,d\xi)$ are affine functions in the following sense:
\begin{equation*}
\begin{split}
b(x)&=b+B(x),\\
c(x)&=a+A(x),\\
K(x,d\xi)&=m(d\xi)+M(x,d\xi)+(c +\langle \gamma, x\rangle)\delta_{(\Delta-x)}(d\xi),
\end{split}
\end{equation*}
where $b\in V$, $a \in S(V)$, $m$ a (signed) measure, $c \in \re$, $\gamma \in V$ and $x \mapsto B(x)$, $x \mapsto A(x)$, $x \mapsto M(x,d\xi)$ 
are restrictions of linear maps on $V$.
Indeed, $c +\langle \gamma, x\rangle$ corresponds to the killing rate of the process, which is incorporated in the jump measure.
Here, we explicitly use the convention that $e^{\langle u, \Delta \rangle}=0$, $b(\Delta)=0$, $c(\Delta)=0$, $K(\Delta,d\xi)=0$ and the fact that $\chi(\Delta-x) =0$ for all
$x \in D$.

Moreover, for $t$ small enough, we have for all $u \in \mathcal{U}$
\begin{align*}
\Phi(t,u)-\Phi(0,u)&=\int_0^t \Phi(s,u) \Bigg(\left\langle \psi(s,u), b\right\rangle +\frac{1}{2}\left\langle \psi(s,u), a\psi(s,u)\right\rangle-c \\
&\quad + \int_V \left(e^{\langle \psi(s,u), \xi \rangle}-1-\left\langle \psi(s,u), \chi(\xi)\right\rangle\right) m(d\xi)\Bigg) ds,\\
\langle\psi(t,u)-\psi(0,u),x\rangle&=\int_0^t \Bigg(\langle\psi(s,u),B(x)\rangle  +\frac{1}{2}\left\langle \psi(s,u), A(x)\psi(s,u)\right\rangle-\langle\gamma,x\rangle \\&\quad + \int_V \left(e^{\langle \psi(s,u), \xi \rangle}-1-\left\langle \psi(s,u), \chi(\xi)\right\rangle\right) M(x,d\xi)\Bigg) ds.
\end{align*}
Note again that the properties of the support of $K(x, \cdot)$ carry over to the measures $M(x, \cdot)$ and $m(\cdot)$ implying that the above integrals are well-defined.
Due to the continuity of $t \mapsto \Phi(t,u)$ and $t \mapsto \psi(t,u)$, we can conclude that the derivatives of $\Phi$ and $\psi$ 
exist at $0$ and are continuous on $\mathcal{U}^m$ for every $m \geq 1$, since they are given by
\begin{align*}
F(u) = \frac{\partial \Phi(t,u)}{\partial t}\Bigg |_{t=0}&=\left\langle u, b\right\rangle +\frac{1}{2}\left\langle u, a u\right\rangle-c \\
&\quad + \int_V \left(e^{\langle u, \xi \rangle}-1-\left\langle u, \chi(\xi)\right\rangle\right) m(d\xi),\\
\langle R(u),x\rangle = \left \langle \frac{\partial \psi(t,u)}{\partial t}\Bigg |_{t=0}, x \right\rangle&=\langle u,B(x)\rangle +\frac{1}{2}\left\langle u, A(x) u\right\rangle-\langle\gamma,x\rangle \\
&\quad + \int_V \left(e^{\langle u, \xi \rangle}-1-\left\langle u, \chi(\xi)\right\rangle\right) M(x,d\xi).
\end{align*}
This proves the first part of the theorem.

By the regularity of $X$, we are now allowed to differentiate the semiflow equations~\eqref{eq:flowprop} on the set $\mathcal{Q}=\{(t,u)\in \re_+ \times \mathcal{U} \,|\, \Phi(s,u)\neq 0, \textrm{ for all } s\in [0,t]\}$ with respect
to $s$ and evaluate them at $s=0$. As a consequence, $\Phi$ and
$\psi$ satisfy~\eqref{eq:RiccatiFgen} and~\eqref{eq:RiccatiRgen}.
\end{proof}

\begin{remark}
The differential equations~\eqref{eq:RiccatiFgen} and~\eqref{eq:RiccatiRgen} 
are called \emph{generalized Riccati equations}, which is due to the particular form 
of $F$ and $R$.
\end{remark}

\begin{remark}\label{rem:compactstatespace}
Using the results of Theorem~\ref{th:regularity}, in particular the assertion on the semimartingale characteristics, we aim to construct examples of affine processes on compact state spaces, which justify that we do not restrict ourselves to unbounded sets $D$. For simplicity, let $n=1$. Then the pure deterministic drift process with characteristics
\[
b(x)=b+Bx, \quad B \leq 0,\, -Br_1 \leq b \leq -Br_2, \quad c(x)=0,  \quad K(x,d\xi)=0
\]
is an affine process on the interval $[r_1,r_2]$. Another example of an affine process on a compact, but discrete, state space of the form
\[
 \{0, 1, \ldots, k\}
\]
can be obtained by a pure jump process $X$ with jump size distribution $\delta_{1}(d\xi)$ and intensity $k-X$. In terms of the semimartingale characteristics, we thus have
$b(x)=0$, $c(x)=0$ and $K(x,d\xi)=(k-x)\delta_1(d\xi)$. 
For such type of jump processes the state space is necessarily discrete and cannot be extended to the whole interval $[0,k]$. In the presence of a diffusion component, the state space is necessarily unbounded, since the stochastic invariance conditions on $c(x)=a+A x$, which would guarantee that the process remains 
in some interval $[r_1,r_2]$ imply
\[
 a+Ar_1=0 \quad \textrm{and} \quad a+Ar_2=0,
\]
yielding $a=A = 0$. 
\end{remark}

%\bibliographystyle{abbrvnat}
%\bibliography{references}

\end{document}